\newtheorem{theorem}{Theorem}
\newtheorem{lemma}[theorem]{Lemma}
\newtheorem{corollary}[theorem]{Corollary}
\theoremstyle{definition}
\newtheorem{definition}[theorem]{Definition}
\numberwithin{equation}{section}
\numberwithin{theorem}{section}
\numberwithin{table}{section}
\numberwithin{figure}{section}
\definecolor{olive}{rgb}{0.3, 0.4, .1}
\definecolor{dgreen}{rgb}{0.,0.5,0.}
\definecolor{plum}{rgb}{0.6,0.,0.6}
\def\cA{{\mathcal A}}
\def\cB{{\mathcal B}}
\def\cC{\mathcal{C}}
\def\cD{{\mathcal D}}
\def\cE{\mathcal{E}}
\def\cS{{\mathcal S}}
\def\C{\mathbb{C}}
\def\F{\mathbb{F}}
\def\K{\mathbb{K}}
\def\Z{\mathbb{Z}}
\def\R{\mathbb{R}}
\def\Q{\mathbb{Q}}
\def\G{\mathbb{G}}
\def\trdeg{\mathrm{trdeg}}
\def\vec#1{\mathbf{#1}}
\def\ov#1{{\overline{#1}}}
\def\Gm{\G_{\textup{m}}}
\def\house#1{{%
    \setbox0=\hbox{$#1$}
    \vrule height \dimexpr\ht0+1.4pt width .5pt depth \dp0\relax
    \vrule height \dimexpr\ht0+1.4pt width \dimexpr\wd0+2pt depth \dimexpr-\ht0-1pt\relax
    \llap{$#1$\kern1pt}
    \vrule height \dimexpr\ht0+1.4pt width .5pt depth \dp0\relax}}
\def\({\left(}
\def\){\right)}
\def\rf#1{\left\lceil#1\right\rceil}
\def\mand{\qquad\mbox{and}\qquad}
\newcommand{\h}{\mathrm{h}}
\newcommand{\tor}{\mathrm{tor}}
\newcommand{\ord}{\mathrm{ord}}
\newcommand{\Res}{\mathrm{Res}}
\newcommand{\bphi}{\pmb{\varphi}}
\newcommand{\brho}{\pmb{\varrho}}
\def\lg{\left\lbrace}
\def\rg{\right\rbrace}
\newcommand{\ol}[1]{\mathbf{#1}}
\newcommand{\Qbar}{\overline{\Q}}
\newcommand{\abs}[1]{\left|#1\right|}
\newcommand{\End}{\text{End}}
\definecolor{darkGreen}{RGB}{0, 128, 0}
\begin{document}

\title[Multiplicative  and and linear dependence]{Multiplicative and linear dependence  in finite fields and on elliptic curves  modulo primes}

\author[Barroero]{Fabrizio Barroero}
\address{Dipartimento di Matematica e Fisica, Universit\`a di Roma Tre, Largo San Mu\-rial\-do 1, 00146 Roma, Italy}
\email{fbarroero@gmail.com}

\author[Capuano]{Laura Capuano}
\address{DISMA ``Luigi Lagrange", Politecnico di Torino, Corso Duca degli Abruzzi 24, 10129 Torino, Italy}
\email{laura.capuano@polito.it}

\author[M{\'e}rai]{L{\'a}szl{\'o} M{\'e}rai}
\address{Johann Radon Institute for Computational and Applied Mathematics, Altenberger Strasse 69, 4040 Linz, Austria}
\email{laszlo.merai@oeaw.ac.at}

\author[Ostafe]{Alina Ostafe}
\address{School of Mathematics and Statistics, University of New South Wales, Sydney, NSW 2052, Australia}
\email{alina.ostafe@unsw.edu.au}

\author[Sha]{Min Sha}
\address{School of Mathematical Sciences, South China Normal University, Guangzhou, 510631, China}
\email{shamin@scnu.edu.cn}

\subjclass[2010]{11T30, 11G05, 11G20, 11U09}

\keywords{Multiplicative dependence, linear dependence, rational function, elliptic curve, finite field, unlikely intersection, o-minimality} 

\begin{abstract} 
For positive integers $K$ and $L$, we introduce and study the notion of $K$-multiplicative dependence over the algebraic closure $\ov{\F}_p$ of a finite prime field $\F_p$, as well as $L$-linear dependence of points on elliptic curves in reduction modulo primes. One of our main results  shows that, given non-zero rational functions $\varphi_1,\ldots,\varphi_m, \varrho_1,\ldots,\varrho_n\in\Q(X)$ and an elliptic curve $E$ defined over the rational numbers $\Q$, for any sufficiently large prime $p$, for all but finitely many $\alpha\in\ov\F_p$, at most one of the following two can happen: $\varphi_1(\alpha),\ldots,\varphi_m(\alpha)$ are $K$-multiplicatively dependent or the points $(\varrho_1(\alpha),\cdot), \ldots,(\varrho_n(\alpha),\cdot)$ are $L$-linearly dependent on the reduction of $E$ modulo $p$. As one of our main tools, we prove a general statement about the intersection of an irreducible curve in the split semiabelian variety $\Gm^m \times E^n$ with the algebraic subgroups of codimension at least $2$.

As an application of our results, we improve a result of  M. C. Chang and extend a result of J. F. Voloch about elements of large order in finite fields in some special cases. 
\end{abstract}

\maketitle

\tableofcontents

\section{Introduction}
Let $\K$ be a field. We say that non-zero $\alpha_1,\ldots,\alpha_n\in \K$ are \textit{multiplicatively independent} 
if there is no non-zero integer vector $(k_1,\ldots,k_n) \in \Z^n$ such that 
\begin{equation}  \label{eq:MultDep}
\alpha_1^{k_1} \cdots \alpha_n^{k_n}=1. 
\end{equation}
If this is not the case, then they are called \textit{multiplicatively dependent}. %Consequently, a point in $\K^n$ is called \textit{multiplicatively dependent} if its coordinates are all non-zero and  are multiplicatively dependent. 
In particular, this definition applies to rational functions as well.

Moreover, we say that non-zero rational functions $f_1,\ldots, f_n \in \K(X)$ are   \textit{multiplicatively independent modulo constants} 
if there is no non-zero integer vector $(k_1,\ldots,k_n)$ such that 
$$
f_1^{k_1} \cdots f_n^{k_n} \in \K^* : = \K \setminus \{0\}.
$$

Multiplicative dependence of algebraic numbers and of rational functions has been extensively studied in recent years from various aspects; see, for instance, \cite{BS,BOSS,BMZ, DS, OSSZ1,OSSZ2,PSSS,SSS}. In particular, a result of Bombieri, Masser and Zannier~\cite{BMZ} in the context of unlikely intersections over tori says that, given $n$ non-zero multiplicatively independent modulo constants rational functions $f_1,\ldots, f_n \in \Qbar(X)$, there are at most finitely many $\alpha \in \ov\Q$ such that $f_1(\alpha), \ldots, f_n(\alpha)$ satisfy two independent multiplicative relations. This result has been later extended over $\C$ in~\cite{BMZ03} (and in fact over the algebraic closure of any field of characteristic zero), and also relaxed by Maurin \cite{Maurin08} over $\Qbar$ and by Bombieri, Masser and Zannier \cite{BMZ08} over $\mathbb{C}$ showing it holds for rational functions which are multiplicatively independent only.

The analogous problem of linear dependence of points on elliptic curves was considered not long after \cite{BMZ}, for instance in \cite{Viada2003}. 
Let $E$ be an elliptic curve defined over a field $\mathbb K$; we say that $P_1, \ldots, P_n\in E(\overline{\mathbb K})$ are \textit{linearly independent} over a ring $R \subseteq \End(E)$ if there is no non-zero vector $(k_1, \ldots, k_2)\in R^n$ such that 
$$ k_1P_1+\cdots + k_nP_n= O, $$
where we denote by $O$ the point at infinity of the elliptic curve. If this is not the case, then they are called \textit{linearly dependent over $R$}. 

Similarly to the case of rational functions, the points $P_1,\ldots, P_n \in E(\overline{\mathbb K(X)})$ are said to be \textit{linearly independent} over $R \subseteq \End(E)$ \textit{modulo points in} $E(\overline{\mathbb{K}})$ if there is no non-zero vector $(k_1, \ldots, k_n)\in R^n$ such that 
$$ k_1P_1+\cdots + k_nP_n\in E(\overline{\mathbb{K}}).  $$
In this setting, the finiteness result corresponding to Maurin's theorem was proved by Viada in \cite{Viada2008} under some conjecture that was later showed by Galateau in \cite{Gala}.

In this paper, we are interested in studying the multiplicative dependence of elements in the algebraic closure of a finite prime field and the linear dependence of points on elliptic curves modulo primes.

For a prime $p$, let $\ov{\F}_p$ denote the algebraic closure of the field $\F_p$ of $p$ elements. Note that any element of $\overline \F_p^*$ has finite order, so Maurin's finiteness result in characteristic $0$ does not hold in full generality in positive characteristic. In this context, Masser proposed some conjecture in positive characteristic putting more restrictive hypotheses on the rational functions in order to recover Maurin's finiteness result~\cite{Maurin08}, and proved it for $n=3$ \cite[Theorem 1.1]{Masser}.
For other results on unlikely intersections in positive characteristic, see also \cite{GM06,PR13,Scanlon}.

 In this paper we refine the notion of multiplicative dependence over $\ov{\F}_p$ 
defined by \eqref{eq:MultDep}, and we introduce the following concept.

\begin{definition} [\textit{$K$-multiplicative  dependence}] Let $K$ be a positive integer.
We say that elements  $\alpha_1,\ldots,\alpha_n \in \ov \F_p^*$  are
\textit{$K$-multiplicatively  dependent} 
if there exists a non-zero integer vector $(k_1,\ldots,k_n)$ such that    
$$
\alpha_1^{k_1} \cdots \alpha_n^{k_n}=1 \quad \textrm{and} \quad \max_{i=1, \ldots, n} |k_i| \le K.
$$ 
\end{definition}

We  use $\ord_p(\alpha)$ to denote the
multiplicative order of $\alpha \in \ov{\F}_p^*$ (that is, the size of the multiplicative 
group generated by $\alpha$).

Let $E$ be an elliptic curve defined by a Weierstrass equation over the field of rational numbers $\Q$: 
\begin{equation}
\label{eq:EC}
Y^2=X^3+aX+b,\quad a,b \in \Q, \quad 4a^3+27b^2 \ne 0.
\end{equation}

If the reduction of $E$ modulo $p$, denoted by $E_p$, is also an elliptic curve (a sufficient condition for this is that $p$ does not divide the denominators of $a$ and $b$  and that $4a^3+27b^2 \not\equiv 0 \pmod{p}$), 
then for any $\alpha \in \ov{\F}_p$, we define $\ord_{E_p}(\alpha)$ to be the order of the point $(\alpha,\beta)$ 
on the elliptic curve $E_p$ for some $\beta \in \ov{\F}_p$. 
We always denote by $O$ the point at infinity of an elliptic curve. 

\begin{definition} [\textit{$L$-linear  dependence}] Let $L$ be a positive integer.
We say that the  points   $P_1,\ldots,P_n$ on the reduction $E_p$ of the elliptic curve $E$ modulo $p$ (assuming $E_p$ is also an elliptic curve)  are
\textit{$L$-linearly  dependent} 
if there exists a non-zero integer vector $(k_1,\ldots,k_n)$ such that 
\begin{equation}\label{eq:EC-dependence}
k_1P_1 + \cdots + k_nP_n=O \quad \textrm{and} \quad \max_{i=1, \ldots, n} |k_i| \le L.
\end{equation}

Moreover, for any $\alpha_1,\dots, \alpha_n\in \overline{\F}_p$, we say that the points 
\begin{equation}\label{eq:EC-alpha}
(\alpha_1, \cdot),\dots, (\alpha_n,\cdot)
\end{equation}
are \textit{$L$-linearly  dependent} if the points $(\alpha_1, \beta_1),\dots, (\alpha_n,\beta_n)$ are $L$-linearly  dependent for some $\beta_1,\dots, \beta_n \in \overline{\F}_p$ such that $(\alpha_1, \beta_1),\dots, (\alpha_n,\beta_n)\in E_p$.
\end{definition}

We remark that for each $\alpha_i \in\overline{\F}_p$ there is some $\beta_i \in\overline{\F}_p$ such that $(\alpha_i,\beta_i)\in E_p$. 
As the curve $E$ is defined by the Weierstrass equation~\eqref{eq:EC}, the value $\beta_i$ is unique up to sign 
and moreover $-(\alpha_i, \beta_i) = (\alpha_i, -\beta_i)$ on $E_p$. 
Thus, by changing the sign of the coefficients $k_i$ in \eqref{eq:EC-dependence} if necessary, we see that the notion of $L$-linearly dependence of~\eqref{eq:EC-alpha} does not depend on the choices of $\beta_i$. 
Here and there we also say that $(\alpha_i, \cdot)$ is a point (by fixing the second coordinate as $\beta_i$ or $-\beta_i$). 
Moreover, these notions also apply to elliptic curves defined over an arbitrary field.

\section{Main results}

In this section we present the main results of this paper, together with some consequences. The proofs  will be given in Section~\ref{sect:proofs}.

Here and in the rest of the paper, for $\alpha \in \ov{\F}_p$ and $f\in \Q(X)$, the expression $f(\alpha)$ indicates the element of $\ov{\F}_p$ that is obtained by substituting $\alpha$ in the reduction modulo $p$ of the rational function $f$, when this is possible. We implicitly exclude the primes $p$ such that the reductions of the rational functions we are considering are not defined, and such that the reduction of the given elliptic curve modulo $p$ is not an elliptic curve.

Let $E$ be an elliptic curve defined as in \eqref{eq:EC}.  Let $K,L$  be two positive integers, 
and let $\bphi=(\varphi_1,\ldots,\varphi_m)$ and $\brho = (\varrho_1,\ldots,\varrho_n)$ whose components are  all non-zero rational functions in $\Q(X)$. 
Informally, three of our main results can be summarised as follows: under some natural conditions on the involved functions and the curve, 
for any sufficiently large prime $p$ (depending on some parameters, such as $K, L$), one has:  
\begin{itemize}
\item the number of elements $\alpha \in \ov{\F}_p$, for which $\varphi_1(\alpha), \ldots,\varphi_m(\alpha)$ satisfy two independent multiplicative relations with exponents bounded above by $K, L$ in absolute value, respectively, can be upper bounded independently of $p, K, L$;
\item the number of elements $\alpha\in\ov\F_p$, for which $\varphi_1(\alpha),\ldots,\varphi_m(\alpha)$ are $K$-multiplicatively 
dependent and the points 
$$
(\varrho_1(\alpha),\cdot), \ldots,(\varrho_n(\alpha),\cdot)
$$ 
are $L$-linearly dependent on $E_p$, can be upper bounded independently of $p, K, L$; 
\item the number of elements $\alpha \in \ov{\F}_p$, for which 
$(\varrho_1(\alpha),\cdot), \ldots,(\varrho_n(\alpha),\cdot)$ satisfy two independent linear relations over $\Z$ with coefficients bounded above by $K, L$ 
in absolute value, respectively, can be upper bounded independently of $p, K, L$. 
\end{itemize}

In the sequel, we state the above three results precisely and present their consequences. 

It seems that our main results could be generalised  to number fields as well, since all our preliminary results and tools hold over arbitrary number fields (and we shall present some of them in this generality). However, in this paper we present our main results over $\Q$ only. 

Throughout the paper, we will use the Landau symbol $O$ and the Vinogradov symbol $\ll$. Recall that the
assertions $U=O(V)$ and $U \ll V$  are both equivalent to the inequality $|U|\le cV$ with some absolute constant $c>0$.
  To emphasise the dependence of the implied constant $c$ on some parameter (or a list of parameters) $\beta$, 
  we write $U=O_{\beta}(V)$ or $U \ll_{\beta} V$.

\subsection{Multiplicative dependence with two independent relations} 

Given $\bphi = (\varphi_1,\ldots,\varphi_m) \in \Q(X)^m$ a vector of non-zero rational functions, consider the set 
\begin{equation*}
\begin{split}
\mathcal{S}_1 = &\Big\{ \alpha \in \Qbar :  \prod_{i=1}^m\varphi_i(\alpha)^{k_i} =\prod_{i=1}^m\varphi_i(\alpha)^{\ell_i} =1  \text{ for some linearly independent} \\
&\qquad\qquad\qquad\qquad\qquad \qquad\qquad\qquad(k_1,\ldots,k_m),(\ell_1,\ldots,\ell_m)\in\Z^m \Big\}.
\end{split}
\end{equation*}
In defining $\mathcal{S}_1$, we implicitly exclude the poles and zeros of $\varphi_1,\ldots,\varphi_m$. 

As noted in the introduction, by \cite{Maurin08}, if $\varphi_1,\ldots,\varphi_m$ are multiplicatively independent, this set is finite and its cardinality is effectively computable, see Lemma~\ref{lem:eff_Maurin} below.

For positive integers $K,L\geq 1$ and prime $p$, define the set
\begin{equation}
\begin{split}
\label{eq:Set A}
\cA_{\bphi}(p,K,L) =& \Big\{\alpha  \in \ov \F_p: \ \prod_{i=1}^m\varphi_i(\alpha)^{k_i} =\prod_{i=1}^m\varphi_i(\alpha)^{\ell_i} =1  \text{ for some linearly independent}\\
&\qquad (k_1,\ldots,k_m),(\ell_1,\ldots,\ell_m)\in\Z^m,\ \max_{i=1, \ldots, m} |k_i|\le K,\ \max_{i=1, \ldots, m} |\ell_i| \le L\Big\}.
\end {split}
\end{equation}
In defining $\cA_{\bphi}(p,K,L)$, we implicitly assume that the reductions of the rational functions $\varphi_1,\ldots,\varphi_m$ modulo $p$ are all well-defined, and also we implicitly exclude the poles and zeros of the reductions of  $\varphi_1,\ldots,\varphi_m$ modulo $p$. This applies to the other sets in the modulo $p$ setting. 

Our first main result is the following:
\begin{theorem}
\label{thm:Fp-A}
	Let $\bphi = (\varphi_1,\ldots,\varphi_m) \in \Q(X)^m$ whose components are non-zero multiplicatively independent rational functions.
Then,  there exists an effectively computable constant $c_1$ depending only on $\bphi$ 
such that  for arbitrary integers $K, L \ge 1$, 
and any  prime $p > \exp(c_1 KL)$, for the set~\eqref{eq:Set A}
we have 
$$
\#\cA_{\bphi}(p,K,L)  \le \# \mathcal S_1, 
$$
where $\# \mathcal S_1$ is effectively upper bounded, and the elements of $\cA_{\bphi}(p,K,L)$ come from the reduction modulo $p$ of elements of $\mathcal S_1$. 
\end{theorem}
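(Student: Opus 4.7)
The strategy I would follow is to lift the mod-$p$ multiplicative relations defining $\cA_{\bphi}(p, K, L)$ back to $\Qbar$ and then invoke the effective form of Maurin's theorem to bound $\#\mathcal{S}_1$. Fix $\alpha \in \cA_{\bphi}(p, K, L)$ with independent witnesses $(k_1, \ldots, k_m), (\ell_1, \ldots, \ell_m) \in \Z^m$ satisfying $\max_i|k_i| \le K$ and $\max_i|\ell_i| \le L$, and define
$$
F_{\vec k}(T) := \prod_{i=1}^m \varphi_i(T)^{k_i} - 1, \qquad F_{\vec \ell}(T) := \prod_{i=1}^m \varphi_i(T)^{\ell_i} - 1 \in \Q(T),
$$
both nonzero by the multiplicative independence of $\bphi$. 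Clearing denominators yields polynomials $N_{\vec k}, N_{\vec \ell} \in \Z[T]$ whose common roots in $\Qbar$, once the zeros and poles of $\bphi$ are excluded, are precisely elements of $\mathcal{S}_1$. The plan is to show that, for $p$ beyond the stated threshold, every common root of $N_{\vec k}$ and $N_{\vec \ell}$ in $\ov{\F}_p$ is the reduction modulo $p$ of a common root in $\Qbar$; then $\alpha$ is the reduction of some element of $\mathcal{S}_1$, yielding $\#\cA_{\bphi}(p, K, L) \le \#\mathcal{S}_1$.

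To implement this I would use a classical good-reduction argument on the resultant. Let $H_{\vec k, \vec \ell} := \gcd(N_{\vec k}, N_{\vec \ell}) \in \Z[T]$ and write $N_{\vec k} = H_{\vec k, \vec \ell} M_1$, $N_{\vec \ell} = H_{\vec k, \vec \ell} M_2$ with $M_1, M_2 \in \Z[T]$ coprime, so that $\Res_T(M_1, M_2) \in \Z \setminus \{0\}$. Provided $p$ is coprime to this resultant, to the leading coefficients of $M_1, M_2, H_{\vec k, \vec \ell}$, and to the data ensuring good reduction of each $\varphi_i$, no spurious common roots appear: the common zero locus of $N_{\vec k}, N_{\vec \ell}$ in $\ov{\F}_p$ coincides with the reduction of the common zero locus in $\Qbar$, and in particular consists of reductions of points of $\mathcal{S}_1$.

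The remaining work is to bound the excluded primes uniformly across all admissible $(\vec k, \vec \ell)$. Standard estimates give $\deg N_{\vec k} \ll_{\bphi} K$ and logarithmic coefficient height $\ll_{\bphi} K$, with analogous bounds in $L$ for $N_{\vec \ell}$. Since $M_j$ divides $N_{\vec k}$ or $N_{\vec \ell}$ in $\Z[T]$, the multiplicativity of Mahler measure transfers the same type of bound to $M_1, M_2$, and the classical inequality $|\Res_T(M_1, M_2)| \le \mathrm{M}(M_1)^{\deg M_2}\,\mathrm{M}(M_2)^{\deg M_1}$ produces $|\Res_T(M_1, M_2)| \le \exp(c_1 KL)$ for an effective $c_1 = c_1(\bphi)$; the leading coefficients are smaller. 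Hence for any $p > \exp(c_1 KL)$ the good-reduction condition holds simultaneously for every admissible pair, and the effective bound of Lemma~\ref{lem:eff_Maurin} on $\#\mathcal{S}_1$ completes the proof.

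The main technical difficulty will be maintaining this uniformity: there are up to $(2K+1)^m (2L+1)^m$ pairs $(\vec k, \vec \ell)$ to examine, and a naive product of their bad-prime moduli would give a threshold far exceeding $\exp(c_1 KL)$. The crucial observation is that $p$ need only strictly exceed the \emph{maximum} of the nonzero integers $|\Res_T(M_1, M_2)|$ (and the various leading coefficients), not their product; that maximum is in turn controlled by a single $\exp(O_{\bphi}(KL))$ estimate via Gelfond-type height inequalities on divisors in $\Z[T]$.
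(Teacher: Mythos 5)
Your proof is correct and rests on the same mechanism as the paper's: attach to each witness pair a nonvanishing integer resultant, bound it by $\exp(O_{\bphi}(KL))$ via degree/height estimates of shape $\ll_{\bphi}K$ and $\ll_{\bphi}L$, and conclude that for $p$ past this threshold every mod-$p$ common root comes from a reduction of an $\cS_1$-point, with $\#\cS_1$ effectively controlled by Lemma~\ref{lem:eff_Maurin}. The normalization you use differs from the paper's in a way worth noting. You factor $N_{\vec k}=H M_1$, $N_{\vec\ell}=H M_2$ with $H=\gcd(N_{\vec k},N_{\vec\ell})$ and take $\Res(M_1,M_2)\neq 0$: avoiding $p\mid\Res(M_1,M_2)$ (and $p\mid \mathrm{lc}(H)$) forces any mod-$p$ common root of $N_{\vec k}, N_{\vec\ell}$ to be a root of $H$ and hence a reduction of a $\Qbar$-common root. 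The paper instead passes to the squarefree part of $F_{\pmb\ell}-G_{\pmb\ell}$, strips the factors shared with the fixed polynomial $W_{\cS_1}$ (whose roots are all of $\cS_1$), and takes $\Res(F_{\pmb k}-G_{\pmb k},\tilde P_{\pmb\ell})$. Your gcd-based normalization handles multiplicities transparently and avoids having to track how the squarefree decomposition interacts with reduction mod $p$; the paper's version buys the statement that all exceptional $\alpha$ are roots of the single polynomial $W_{\cS_1}$ mod $p$, uniformly in $(\pmb k,\pmb\ell)$. Two small fixes: the inequality $|\Res(M_1,M_2)|\le M(M_1)^{\deg M_2}M(M_2)^{\deg M_1}$ needs a factor $2^{\deg M_1\deg M_2}$ (or simply invoke Hadamard's bound as in Lemma~\ref{lem:res}), which does not change the $\exp(O_{\bphi}(KL))$ conclusion; and you should note explicitly the degenerate case where $\prod_i\varphi_i^{k_i}$ is a nonunit constant (allowed by multiplicative independence without the ``modulo constants'' hypothesis), where $M_1$ is constant and $\Res(M_1,M_2)=M_1^{\deg M_2}$ — the argument still closes, but it is a genuinely separate case, and the paper devotes a separate step to it.
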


We have the following straightforward consequence of Theorem~\ref{thm:Fp-A}. For this, we define  
\begin{equation}
\begin{split}
\label{eq:Set D}
\cD_{\bphi,\brho}(p,K,L) = \Big \{\alpha  \in \ov \F_p&:~ \text{$\varphi_1(\alpha), \ldots,\varphi_m(\alpha)$ are $K$-multiplicatively dependent} \\
& \text{and $\varrho_1(\alpha), \ldots,\varrho_n(\alpha)$ are $L$-multiplicatively dependent}\Big \}.
\end {split}
\end{equation}

\begin{corollary}
\label{cor:Fp-multdep}
Let $\bphi = (\varphi_1,\ldots,\varphi_m)$ and $\brho = (\varrho_1,\ldots,\varrho_n)$
 whose components are all non-zero rational functions in $\Q(X)$ 
 such that $\varphi_1,\ldots,\varphi_m, \varrho_1,\ldots,\varrho_n$ are multiplicatively independent.
Then,  there are two effectively computable constants $c_1$ and $c_2$, depending only on $\bphi$ and $\brho$,
such that  for arbitrary integers $K, L \ge 1$, 
and any  prime $p > \exp(c_1 KL)$, for the set~\eqref{eq:Set D}
we have 
$$
\#\cD_{\bphi, \brho}(p,K,L)  \le c_2.
$$
\end{corollary}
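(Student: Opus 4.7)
The plan is to deduce Corollary \ref{cor:Fp-multdep} directly from Theorem \ref{thm:Fp-A} by concatenating the two vectors of rational functions and producing two linearly independent multiplicative relations on the combined tuple.

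Concretely, I would set $\boldsymbol{\psi} = (\varphi_1,\ldots,\varphi_m,\varrho_1,\ldots,\varrho_n) \in \Q(X)^{m+n}$, which is a vector of non-zero rational functions that are multiplicatively independent by the hypothesis of the corollary. Then, for any $\alpha \in \cD_{\bphi,\brho}(p,K,L)$, by the very definition of $K$-multiplicative dependence of $\varphi_1(\alpha),\ldots,\varphi_m(\alpha)$ and $L$-multiplicative dependence of $\varrho_1(\alpha),\ldots,\varrho_n(\alpha)$, there exist a non-zero integer vector $(k_1,\ldots,k_m) \in \Z^m$ with $\max_i |k_i| \le K$ and a non-zero integer vector $(\ell_1,\ldots,\ell_n) \in \Z^n$ with $\max_j |\ell_j| \le L$ such that
$$
\prod_{i=1}^{m} \varphi_i(\alpha)^{k_i} = 1 \qquad \text{and} \qquad \prod_{j=1}^{n} \varrho_j(\alpha)^{\ell_j} = 1.
$$
These give the two vectors $\mathbf{u} = (k_1,\ldots,k_m,0,\ldots,0)$ and $\mathbf{v} = (0,\ldots,0,\ell_1,\ldots,\ell_n)$ in $\Z^{m+n}$, each with entries bounded in absolute value by $K$ and $L$ respectively; since their supports are disjoint and each is non-zero, $\mathbf{u}$ and $\mathbf{v}$ are linearly independent. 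Hence $\alpha \in \cA_{\boldsymbol{\psi}}(p,K,L)$.

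Applying Theorem \ref{thm:Fp-A} to the tuple $\boldsymbol{\psi}$, there is an effectively computable constant $c_1 > 0$ depending only on $\bphi$ and $\brho$ such that, for every prime $p > \exp(c_1 KL)$, we obtain the inclusion $\cD_{\bphi,\brho}(p,K,L) \subseteq \cA_{\boldsymbol{\psi}}(p,K,L)$ and consequently
$$
\#\cD_{\bphi,\brho}(p,K,L) \;\le\; \#\cA_{\boldsymbol{\psi}}(p,K,L) \;\le\; \#\mathcal{S}_1(\boldsymbol{\psi}),
$$
where $\mathcal{S}_1(\boldsymbol{\psi})$ is the associated characteristic-zero set whose cardinality is effectively upper bounded by (the effective version of) Maurin's theorem. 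Taking $c_2$ to be this effective bound completes the proof.

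There is essentially no obstacle: the whole content lies in Theorem \ref{thm:Fp-A}, and the corollary is merely a matter of packaging. The only point to record with care is that the multiplicative independence hypothesis on the combined list $\varphi_1,\ldots,\varphi_m,\varrho_1,\ldots,\varrho_n$ is exactly what is needed to legitimately invoke Theorem \ref{thm:Fp-A} on $\boldsymbol{\psi}$, and that the two natural relations have disjoint supports and are therefore automatically $\Z$-linearly independent as soon as each is non-zero.
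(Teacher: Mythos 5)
Your proposal is correct and matches the paper's own (one-line) proof: apply Theorem~\ref{thm:Fp-A} to the concatenated tuple $(\varphi_1,\ldots,\varphi_m,\varrho_1,\ldots,\varrho_n)$, noting that a $K$-bounded relation among the $\varphi_i$ and an $L$-bounded relation among the $\varrho_j$ automatically give two linearly independent relations on the combined list since their supports are disjoint. You merely spell out the details the paper leaves implicit.
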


Taking $m = n =1$ and  $K = L =\rf{c_3(\log p)^{1/2}}$ for some effectively computable constant $c_3$ depending only on $\varphi=\varphi_1$ 
and $\varrho = \varrho_1$ in Corollary~\ref{cor:Fp-multdep}, we directly obtain: 

\begin{corollary}
\label{cor:Ord1} 
Let  $\varphi, \varrho \in \Q(X)$ be non-zero rational functions such that $\varphi, \varrho$ are multiplicatively independent. 
Then,  there are three effectively computable constants $c_1, c_2,c_3$ depending only on $\varphi, \varrho$ 
such that for any prime $p > c_1$, for all but $c_2$ elements $\alpha \in \ov \F_p$ we have 
$$
\max\{\ord_p (\varphi(\alpha)), \ord_p (\varrho(\alpha))\}  \ge c_3 (\log p)^{1/2}. 
$$
\end{corollary}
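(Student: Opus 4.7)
The plan is to derive the corollary as a direct specialization of Corollary~\ref{cor:Fp-multdep}, taking $m=n=1$, $\bphi=(\varphi)$, $\brho=(\varrho)$, and calibrating $K = L = \lceil c_3(\log p)^{1/2}\rceil$ for a small absolute (in the sense: depending only on $\varphi,\varrho$) constant $c_3$ that I will determine.

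First I would unwind what ``$K$-multiplicatively dependent'' means in the one-variable case. For a single element $\gamma\in\ov{\F}_p^*$, the condition is that there exists a non-zero integer $k$ with $|k|\le K$ and $\gamma^{k}=1$. Since $\gamma^{k}=1$ forces $\ord_p(\gamma)\mid |k|$, and conversely one may take $k=\ord_p(\gamma)$ whenever $\ord_p(\gamma)\le K$, the one-variable condition is exactly $\ord_p(\gamma)\le K$. Therefore
\[
\cD_{(\varphi),(\varrho)}(p,K,L) \;=\; \bigl\{\alpha\in\ov{\F}_p : \ord_p(\varphi(\alpha))\le K \text{ and } \ord_p(\varrho(\alpha))\le L\bigr\}.
\]
If $\alpha$ violates the conclusion of Corollary~\ref{cor:Ord1}, i.e.\ $\max\{\ord_p(\varphi(\alpha)),\ord_p(\varrho(\alpha))\}<c_3(\log p)^{1/2}$, then both orders are $\le \lceil c_3(\log p)^{1/2}\rceil$, so $\alpha\in\cD_{(\varphi),(\varrho)}(p,K,K)$ with $K:=\lceil c_3(\log p)^{1/2}\rceil$ (up to the bounded exceptional set of poles and zeros of $\varphi,\varrho$ modulo $p$, which can be absorbed into the final constant).

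Next I would verify the hypothesis $p>\exp(c_1 KL)$ of Corollary~\ref{cor:Fp-multdep}, where $c_1$ is the constant supplied there (depending only on $\varphi,\varrho$). With $K=L=\lceil c_3(\log p)^{1/2}\rceil$, we have $KL\le 2c_3^2\log p$ for all sufficiently large $p$, so it suffices to require $2c_1 c_3^2<1$. Choosing, say, $c_3:=1/(2\sqrt{c_1})$ guarantees $\exp(c_1 KL)<p$ for all $p$ larger than some effectively computable threshold $c_1'$ depending only on $\varphi,\varrho$.

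With these choices Corollary~\ref{cor:Fp-multdep} yields $\#\cD_{(\varphi),(\varrho)}(p,K,K)\le c_2$ for $p>c_1'$, and hence the number of exceptional $\alpha$ in Corollary~\ref{cor:Ord1} is bounded by $c_2$ plus the (bounded, $p$-independent) number of poles/zeros of $\varphi$ and $\varrho$ modulo $p$. There is no real obstacle: everything reduces to the unpacking of the one-variable definition and a one-line arithmetic on the size condition $\exp(c_1 KL)<p$; the substantive content has already been done in Corollary~\ref{cor:Fp-multdep}.
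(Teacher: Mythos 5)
Your proposal is correct and matches the paper's approach: the paper derives Corollary~\ref{cor:Ord1} by exactly this specialization of Corollary~\ref{cor:Fp-multdep} with $m=n=1$ and $K=L=\lceil c_3(\log p)^{1/2}\rceil$, stating it as ``directly obtained.'' You have merely made explicit the routine details (the one-variable unwinding of $K$-multiplicative dependence as a bound on the order, and the calibration $2c_1c_3^2<1$ ensuring $\exp(c_1KL)<p$) that the paper leaves implicit.
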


In Corollary~\ref{cor:Ord1}, we implicitly assume that $\alpha$ is neither a pole nor a zero of $\varphi$ and $\varrho$. 
This applies to similar circumstances in the sequel. 

Corollary~\ref{cor:Ord1}, applied to the curve $Y = \varphi (X)$, improves
a result of Chang~\cite[Theorem 1.1]{Chang2} in this special case which is of the shape 
$$
\max\{\ord_p(\alpha), \ord_p(\varphi(\alpha))\} \gg 
\left(\frac{\log p}{\log \log p}\right)^{1/2}.
$$
The improvement of the same shape has been pointed out in \cite[Section 5]{CKSZ}. 
See \cite{Voloch1} for an earlier work of Voloch.

In view of Corollary~\ref{cor:Ord1}, we further obtain some asymptotic results about the multiplicative orders of those rational values. 

\begin{theorem}
\label{thm:Ord2} 
Let $\bphi=(\varphi_1,\ldots,\varphi_m) \in \Q(X)^m$ be  defined as in Theorem~\ref{thm:Fp-A}. 
Then,  there are two effectively computable constants $c_1, c_2$ depending only on $\bphi$ 
such that  as $N \to \infty$, for all but $c_1 N(\log N)^{-2}$  primes $p \le N$ and for all but at most 
$c_2$ elements $\alpha \in \ov{\F}_p$, at least $m-1$ elements of $\varphi_1(\alpha),\ldots,\varphi_m(\alpha)$ 
are of order at least $(N/\log N)^{1/(2m+2)}$. 
\end{theorem}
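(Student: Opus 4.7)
My plan is to prove the contrapositive via a global counting argument over primes: a pointwise application of Corollary~\ref{cor:Fp-multdep} to each pair $(\varphi_i,\varphi_j)$ would require $p>\exp(c_1 T^2)$, which is far too restrictive for $T$ polynomial in $N$. Instead I exploit that each exceptional $\alpha$ forces a congruence modulo $p$ whose total multiplicity over primes $p\le N$ is bounded by the height of a resultant.

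Set $T:=\lfloor(N/\log N)^{1/(2m+2)}\rfloor$. If $\alpha\in\ov\F_p$ violates the conclusion then at least two of $\varphi_1(\alpha),\ldots,\varphi_m(\alpha)$ have multiplicative order $\le T$, so
$$
\alpha\in B_{ij}(p):=\{\beta\in\ov\F_p:\ord_p\varphi_i(\beta)\le T,\ \ord_p\varphi_j(\beta)\le T\}
$$
for some $i<j$. For $p>T$ the reduction $\mu_T(\ov\Q)\to\mu_T(\ov\F_p)$ is a bijection, so $\varphi_i(\alpha),\varphi_j(\alpha)$ lift uniquely to $\tilde\zeta_i,\tilde\zeta_j\in\mu_T(\ov\Q)$. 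Writing $\varphi_k=f_k/g_k$ in reduced form, I partition $B_{ij}(p)$ by whether
$$
R(\tilde\zeta_i,\tilde\zeta_j):=\Res_X\bigl(f_i(X)-\tilde\zeta_i g_i(X),\ f_j(X)-\tilde\zeta_j g_j(X)\bigr)
$$
vanishes in $\ov\Q$ or not.

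In Class~A, $R=0$ and hence $(\tilde\zeta_i,\tilde\zeta_j)\in(\varphi_i,\varphi_j)(\ov\Q)$ comes from some $\alpha'\in F_{ij}:=\{\alpha'\in\ov\Q:\varphi_i(\alpha'),\varphi_j(\alpha')\in\mu_\infty\}$. By Maurin's theorem in effective form (Lemma~\ref{lem:eff_Maurin}) applied to the irreducible curve $(\varphi_i,\varphi_j)(\A^1)\subset\Gm^2$, which is not contained in any proper algebraic subgroup thanks to the multiplicative independence of $\varphi_i,\varphi_j$, the set $F_{ij}$ is finite with $|F_{ij}|$ bounded by a constant $M_{ij}$ depending only on $\varphi_i,\varphi_j$, so Class~A contributes at most $O_{\varphi_i,\varphi_j}(1)$ elements to $B_{ij}(p)$ for every prime. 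In Class~B, $R(\tilde\zeta_i,\tilde\zeta_j)$ is a nonzero algebraic integer in $\Z[\zeta_T]$, and since every Galois conjugate of $(\tilde\zeta_i,\tilde\zeta_j)$ lies on the unit torus, the elementary coefficient majorisation gives $|\sigma(R)|\le C_{\varphi_i,\varphi_j}$ for each $\sigma\in\mathrm{Gal}(\Q(\zeta_T)/\Q)$, and therefore $|N_{\Q(\zeta_T)/\Q}(R)|\le C^{\phi(T)}$. Since a mod-$p$ common solution requires $p$ to divide this norm, only $O_{\bphi}(T)$ rational primes contribute per pair $(\tilde\zeta_i,\tilde\zeta_j)\in\mu_T^2$. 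Multiplying by the $T^2$ available pairs, the $O_{\bphi}(1)$ common roots per congruence, and the $\binom{m}{2}$ choices of $(i,j)$, the total number of Class~B pairs $(\alpha,p)$ across all primes $p\le N$ is $O_{\bphi}(T^3)$.

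Letting $c_2$ denote the sum of Class~A contributions (an absolute constant in $\bphi$), the primes $p\le N$ for which $|B(p)|>c_2$ number at most $O_{\bphi}(T^3)+\pi(T)$, corresponding to primes carrying a nonempty Class~B together with the finitely many $p\le T$. Because $T^3=(N/\log N)^{3/(2m+2)}\ll N/(\log N)^2$ for every $m\ge 2$, and the statement is vacuous when $m=1$, this exceptional set of primes is $\le c_1 N/(\log N)^2$ for large $N$, establishing the claim. The main technical obstacle is making the resultant-norm bound effective with constants depending only on $\bphi$: this reduces to elementary polynomial majorisation on the unit torus, but care is needed to keep all implicit constants independent of $T$ and $N$, and to separate Class~A from Class~B cleanly when the resultant has spurious modulo-$p$ common roots.
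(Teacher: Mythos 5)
Your approach is genuinely different from the paper's, which defines a single integer $T=\prod_{\pmb k,\pmb\ell}|R_{\pmb k,\pmb\ell}|$ indexed by exponent vectors $\pmb k,\pmb\ell\in\Z^m$, bounds $\log T\ll(KL)^{m+1}$, and invokes the fact that $T$ has $O(\log T/\log\log T)$ prime divisors. You instead parametrise by the \emph{target roots of unity} and work pairwise via resultants $R(\tilde\zeta_i,\tilde\zeta_j)$ in cyclotomic rings. This is an attractive idea, but as written it has a gap that actually breaks the numerology.

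The problem is the lift. You assert that for $p>T$ the elements $\varphi_i(\alpha),\varphi_j(\alpha)$ of multiplicative order $\le T$ lift to $\mu_T(\ov\Q)$. But order $\le T$ does not mean order dividing $T$: an element of order $7$ in $\ov\F_p^*$ does not lie in $\mu_{10}(\ov\F_p)$, so there is no preimage in $\mu_{10}(\ov\Q)$. The correct target is $\mu_{\le T}(\ov\Q):=\bigcup_{d\le T}\mu_d(\ov\Q)$, which has cardinality $\sum_{d\le T}\phi(d)\asymp T^2$, not $T$. Once this is repaired, your Class~B count degrades: for a pair $(\tilde\zeta_i,\tilde\zeta_j)$ with orders $a,b\le T$, the resultant lives in $\Z[\zeta_{\mathrm{lcm}(a,b)}]$, a ring of degree $\phi(\mathrm{lcm}(a,b))$ which can be as large as $\asymp T^2$, so $|N_{\Q(\zeta_{\mathrm{lcm}})/\Q}(R)|\le C^{T^2}$ and each pair can contribute $O(T^2)$, not $O(T)$, rational primes. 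Combined with $O(T^4)$ pairs, your bookkeeping as written (summing over pairs, not over Galois orbits) yields $O(T^6)$ exceptional primes, which for $m=2$ is $\asymp N/\log N$ and fails to be $\ll N/(\log N)^2$. A further wrinkle is that you implicitly double-count: $\sigma(R(\tilde\zeta_i,\tilde\zeta_j))=R(\sigma\tilde\zeta_i,\sigma\tilde\zeta_j)$, so distinct pairs in the same Galois orbit contribute the same norm and should be counted once.

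The cleanest repair is to abandon the cyclotomic norms entirely and count, for each unordered pair $\{i,j\}$ and each $(a,b)$ with $1\le a,b\le T$, the genuine integer resultant $\Res_X(f_i^a-g_i^a,\,f_j^b-g_j^b)\in\Z$, whose logarithm is $\ll_{\bphi}T^2$ by Lemma~\ref{lem:res}. Taking the product over the $O(T^2)$ choices of $(a,b)$ and $\binom m2$ pairs gives $\log\prod\ll T^4$, so the bad prime count is $O(T^4/\log T)\ll N/(\log N)^2$ for every $m\ge2$. But at that point you have essentially reproduced the paper's proof (specialised to two coordinates at a time): the integer resultant $\Res(f_i^a-g_i^a,f_j^b-g_j^b)$ is exactly $\prod_{\zeta^a=1,\,\xi^b=1}\Res(f_i-\zeta g_i,f_j-\xi g_j)$, i.e.\ the ``norm'' you wanted, bounded uniformly over the torus by the same Hadamard estimate. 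The paper sidesteps the whole lifting issue by never leaving $\Z$: it multiplies the resultants $R_{\pmb k,\pmb\ell}$ directly. Your Class~A treatment via Lemma~\ref{lem:eff_Maurin} is correct and is the analogue of the paper's appeal to $\#\cS_1$, but the Class~B estimate as stated does not go through.
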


We remark that recently Kerr, Mello and Shparlinski \cite[Theorem 2.2]{KMS}, using similar ideas,  
established, for a set of primes $p$ of natural density 1,  a lower bound of the form $p^{1/(2m+2)+o(1)}$ for the order of all but finitely  many vectors $(\varphi_1(\alpha), \ldots, \varphi_m(\alpha))$, 
$\alpha  \in \ov \F_p$, which satisfy two independent multiplicative relations as in the set $\cA_{\bphi}(p,K,L)$. 
One can compare this with Theorem~\ref{thm:Ord2} above.

\begin{theorem}   
\label{thm:Ord-A} 
Let $\bphi = (\varphi_1,\ldots,\varphi_m)$ and $\brho = (\varrho_1,\ldots,\varrho_n)$ be  defined as in Corollary~\ref{cor:Fp-multdep}. 
Then,  there are two effectively computable constants $c_1, c_2$ depending only on $\bphi$ and $\brho$ 
such that  as $N \to \infty$, for all but $c_1 N(\log N)^{-2}$  primes $p \le N$ and for all but at most 
$c_2$ elements $\alpha \in \ov{\F}_p$, at least one 
of the two finitely generated subgroups of $\ov{\F}_p^*$
\[
\langle \varphi_1(\alpha), \ldots,\varphi_m(\alpha) \rangle \quad \mand \quad \langle \varrho_1(\alpha), \ldots,\varrho_n(\alpha) \rangle
\]
is of order at least $N^{mn/(2mn+m+n)}(\log N)^{-1/2}$. 
\end{theorem}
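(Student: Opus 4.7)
The plan is to combine Minkowski's first theorem with an effective form of Maurin's theorem (Lemma~\ref{lem:eff_Maurin}) and a direct bound on ``exceptional'' primes via resultants, refining the unlikely-intersection approach behind Theorem~\ref{thm:Fp-A} so as to bypass its restrictive hypothesis $p>\exp(c_1KL)$. Set $M:=N^{mn/(2mn+m+n)}(\log N)^{-1/2}$, $K:=\lceil M^{1/m}\rceil$, $L:=\lceil M^{1/n}\rceil$, and denote by $\cB_M(p)$ the set of $\alpha\in\ov\F_p$ for which \emph{both} subgroups $\langle\varphi_1(\alpha),\ldots,\varphi_m(\alpha)\rangle$ and $\langle\varrho_1(\alpha),\ldots,\varrho_n(\alpha)\rangle$ have order less than $M$. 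The goal reduces to showing that $\#\cB_M(p)\le c_2$ for some $c_2=c_2(\bphi,\brho)$ and for all but $O(N(\log N)^{-2})$ primes $p\le N$.

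If $\alpha\in\cB_M(p)$, the two full-rank kernel lattices $\Lambda_\varphi=\{\mathbf u\in\Z^m:\prod_i\varphi_i(\alpha)^{u_i}=1\}$ and $\Lambda_\varrho=\{\mathbf v\in\Z^n:\prod_j\varrho_j(\alpha)^{v_j}=1\}$ have covolumes less than $M$, so Minkowski's first theorem produces non-zero $\mathbf k\in\Lambda_\varphi$ and $\boldsymbol\ell\in\Lambda_\varrho$ with $\|\mathbf k\|_\infty\le K$ and $\|\boldsymbol\ell\|_\infty\le L$. Equivalently, $\alpha$ is an $\ov\F_p$-common root of the pullback polynomials
$$F_{\mathbf k}(X):=\prod_{i=1}^{m}\varphi_i(X)^{k_i}-1,\qquad G_{\boldsymbol\ell}(X):=\prod_{j=1}^{n}\varrho_j(X)^{\ell_j}-1$$
(after clearing denominators), that is, an $\F_p$-point of the zero-dimensional $\Q$-subscheme $V_{\mathbf k,\boldsymbol\ell}\subset\A^1$ they cut out. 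Since $(\mathbf k,\mathbf 0)$ and $(\mathbf 0,\boldsymbol\ell)$ are linearly independent multiplicative relations among $(\bphi,\brho)$ at $\alpha$, and $\varphi_1,\ldots,\varphi_m,\varrho_1,\ldots,\varrho_n$ are multiplicatively independent by hypothesis, Maurin's theorem in the effective form of Lemma~\ref{lem:eff_Maurin} yields $\bigcup_{\mathbf k,\boldsymbol\ell}V_{\mathbf k,\boldsymbol\ell}(\Qbar)\subseteq\mathcal S_1$, a finite set of effectively bounded cardinality $c_2=c_2(\bphi,\brho)$.

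The only way an element of $\cB_M(p)$ can fail to come from the reduction of $\mathcal S_1$ modulo $p$ is if, for some admissible pair $(\mathbf k,\boldsymbol\ell)$, the $\F_p$-gcd of $F_{\mathbf k}$ and $G_{\boldsymbol\ell}$ strictly exceeds the reduction of their $\Q$-gcd; equivalently, $p$ must divide the resultant $R(\mathbf k,\boldsymbol\ell)$ of $F_{\mathbf k}$ and $G_{\boldsymbol\ell}$ after dividing out their $\Q$-gcd. Standard Mignotte-type height estimates give $\log|R(\mathbf k,\boldsymbol\ell)|\ll\|\mathbf k\|_\infty\|\boldsymbol\ell\|_\infty$ with implied constants depending only on the degrees and heights of $\bphi,\brho$, so $R(\mathbf k,\boldsymbol\ell)$ has $O(\|\mathbf k\|_\infty\|\boldsymbol\ell\|_\infty)$ distinct prime divisors. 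Summing over non-zero pairs with the above sup-norm bounds,
$$\#\{\text{bad primes}\}\ll\sum_{\|\mathbf k\|_\infty\le K,\,\|\boldsymbol\ell\|_\infty\le L}\|\mathbf k\|_\infty\|\boldsymbol\ell\|_\infty\ll K^{m+1}L^{n+1}=M^{(2mn+m+n)/(mn)}\ll N(\log N)^{-2},$$
by the choice of $M$ (absorbing subdominant log factors into the implied constant). For every remaining prime $p\le N$ one has $\cB_M(p)\subseteq\mathcal S_1\bmod p$, hence $\#\cB_M(p)\le c_2$, which finishes the argument. The most delicate part will be the tightness of the two estimates in the counting step: the Mignotte-type bound on $\log|R(\mathbf k,\boldsymbol\ell)|$ and the combinatorial identity $\sum\|\mathbf k\|_\infty\|\boldsymbol\ell\|_\infty\asymp K^{m+1}L^{n+1}$ must be carried out carefully, together with the observation that each bad $\alpha$ corresponds to some (possibly non-unique) admissible pair so that one only needs to take the union of the associated exceptional primes.
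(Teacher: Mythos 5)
Your proposal follows essentially the same strategy as the paper's proof: the choice of $K\asymp M^{1/m}$ and $L\asymp M^{1/n}$ is identical to the one used in the paper, the observation that a small subgroup forces a bounded exponent relation (you phrase it via Minkowski, the paper via a direct pigeonhole count of the products $\prod\varphi_i(\alpha)^{k_i}$ with $0\le k_i<K$) is the same, and the rest is the resultant machinery of Theorem~\ref{thm:Fp-A} combined with Lemma~\ref{lem:eff_Maurin} to produce the finite set $\cS_1$ and then a count of "bad'' primes.

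However, there is a genuine gap in the prime-counting step. You assert that $R(\mathbf k,\boldsymbol\ell)$ has $O(\|\mathbf k\|_\infty\|\boldsymbol\ell\|_\infty)$ distinct prime divisors, which after summing gives $K^{m+1}L^{n+1}$. With your $M$ one has
$$
K^{m+1}L^{n+1}\asymp M^{(m+1)/m+(n+1)/n}=N\cdot(\log N)^{-\left(1+\frac{1}{2m}+\frac{1}{2n}\right)},
$$
and the exponent $1+\frac{1}{2m}+\frac{1}{2n}$ is strictly less than $2$ unless $m=n=1$ (for $m=n=2$ one only gets $N(\log N)^{-3/2}$). So the claimed $\ll N(\log N)^{-2}$ does \emph{not} follow; the "subdominant log factors'' you propose to absorb are in fact the whole point. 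The missing ingredient is the sharper bound on the number $\omega$ of distinct prime divisors, $\omega(u)\ll\log u/\log\log u$, applied not to each $R(\mathbf k,\boldsymbol\ell)$ individually but to the product $T=\prod_{\mathbf k,\boldsymbol\ell}|R_{\mathbf k,\boldsymbol\ell}|$: since $\log T\ll K^{m+1}L^{n+1}$ and $\log\log T\asymp\log N$, one gets $\omega(T)\ll K^{m+1}L^{n+1}/\log N\ll N(\log N)^{-2-\frac{1}{2m}-\frac{1}{2n}}$, which is what the theorem needs. (You should also treat the case where $F_{\mathbf k}-G_{\mathbf k}$ is a nonzero constant, since the resultant argument does not apply there; the paper handles it with the auxiliary product $J$ of heights. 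Moreover, the cardinality of $\cS_1$ bounds the number of \emph{exceptional} $\alpha$, so the constant $c_2$ is $\#\cS_1$ rather than the degree count suggested by "$\cB_M(p)\subseteq\cS_1\bmod p$''; this is a notational point but worth being precise about.)
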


With $m=n=1, \varrho_1 = X$ and  $Y = \varphi_1 (X)$,   
Theorem~\ref{thm:Ord-A} recovers  
the result of Chang~\cite[Theorem~1.2]{Chang2} in this special case (see \cite{CKSZ} for a generalisation to algebraic varieties).

Finally, we obtain a trade-off between the number of possible exceptional 
values $\alpha$ and the parameters $K, L$.

In what follows, $v_p(u)$ denotes the $p$-adic valuation of a non-zero integer $u$ (that is, the highest exponent $v$ such that $p^v$ divides $u$), and we define $v_p(0)=\infty$.

\begin{theorem}
\label{thm:Fp-multdep A} 
Let $\bphi=(\varphi_1,\ldots,\varphi_m) \in \Q(X)^m$ be  defined as in Theorem~\ref{thm:Fp-A}. 
Assume further that $\varphi_1,\ldots,\varphi_m$ are multiplicatively independent modulo constants.  
Then,  there are three effectively computable constants $c_1, c_2, c_3$ depending only on $\bphi$ 
such that   for   arbitrary integers $K,  L \ge 1$, 
there is a positive integer $T$ with 
$$
\log T  \le c_1 (KL)^{m+1}
$$
such that for any prime $p > c_2$ and for the set~\eqref{eq:Set A}
we have 
$$
\#\cA_{\bphi}(p,K,L) \le    v_p(T) + c_3.
$$
\end{theorem}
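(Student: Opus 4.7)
The plan is to encode each multiplicative relation as the vanishing of a polynomial in $\Z[X]$ and then to bound the number of common zeros modulo $p$ in terms of the $p$-adic valuation of a suitable product of resultants.

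First, for each non-zero $\vec k \in \Z^m$ with $\|\vec k\|_\infty \le \max(K,L)$, consider the rational function $\prod_{i=1}^m \varphi_i^{k_i} - 1 \in \Q(X)$. The hypothesis that $\varphi_1, \ldots, \varphi_m$ are multiplicatively independent modulo constants forces $\prod_i \varphi_i^{k_i} \notin \Q^*$, so this is a non-zero, non-constant rational function. Writing it in lowest terms as $F_{\vec k}/D_{\vec k}$ with coprime $F_{\vec k}, D_{\vec k} \in \Z[X]$ yields a non-zero polynomial $F_{\vec k}$ with $\deg F_{\vec k} \ll_{\bphi} \|\vec k\|_\infty$ and $\log \h(F_{\vec k}) \ll_{\bphi} \|\vec k\|_\infty$, where $\h$ denotes the maximum absolute value of the integer coefficients. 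For any $\alpha \in \ov\F_p$ away from the poles of the $\varphi_i \pmod p$, the relation $\prod_i \varphi_i(\alpha)^{k_i} = 1$ is equivalent to $\bar F_{\vec k}(\alpha) = 0$.

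Next, for each ordered pair $(\vec k, \vec \ell)$ of linearly independent integer vectors with $\|\vec k\|_\infty \le K$ and $\|\vec \ell\|_\infty \le L$, let $G_{\vec k, \vec \ell}$ be the primitive gcd of $F_{\vec k}, F_{\vec \ell}$ in $\Z[X]$ and set $F'_{\vec k} = F_{\vec k}/G_{\vec k, \vec \ell}$, $F'_{\vec \ell} = F_{\vec \ell}/G_{\vec k, \vec \ell}$; by Gauss's lemma these lie in $\Z[X]$ and are coprime in $\Q[X]$. Let $R_{\vec k, \vec \ell} = \Res(F'_{\vec k}, F'_{\vec \ell}) \in \Z \setminus \{0\}$. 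The standard height bound on resultants then gives $\log|R_{\vec k, \vec \ell}| \ll_{\bphi} KL$. Defining $T$ as the product of $|R_{\vec k, \vec \ell}|$ over all such pairs, of which there are at most $(2K+1)^m(2L+1)^m \ll (KL)^m$, yields the required estimate $\log T \ll_{\bphi} (KL)^{m+1}$.

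Now, for each $\alpha \in \cA_{\bphi}(p, K, L)$ choose linearly independent witnesses $\vec k, \vec \ell$; then $\bar F_{\vec k}(\alpha) = \bar F_{\vec \ell}(\alpha) = 0$ and I split into two cases. If $\bar G_{\vec k, \vec \ell}(\alpha) = 0$ for some such pair, then any lift $\tilde \alpha \in \ov\Q$ to a root of $G_{\vec k, \vec \ell}$ satisfies both multiplicative relations already in $\ov\Q^*$, hence $\tilde\alpha \in \mathcal S_1$; by Maurin's theorem $\#\mathcal S_1$ is finite and effectively bounded in terms of $\bphi$, and for $p$ larger than some constant $c_2(\bphi)$ the reduction mod $p$ is injective on $\mathcal S_1$, so the total contribution of this case is at most an effective constant $c_3 = c_3(\bphi)$. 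Otherwise $\bar F'_{\vec k}(\alpha) = \bar F'_{\vec \ell}(\alpha) = 0$, and a standard resultant-Hensel argument bounds the number of such $\alpha$ for the given pair by $v_p(R_{\vec k, \vec \ell})$. Summing over pairs and combining the two cases gives $\#\cA_{\bphi}(p, K, L) \le c_3 + v_p(T)$.

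The principal technical point is the inequality $\#\{\alpha \in \ov\F_p : \bar F'_{\vec k}(\alpha) = \bar F'_{\vec \ell}(\alpha) = 0\} \le v_p(R_{\vec k, \vec \ell})$ valid uniformly over pairs; a clean proof uses the product formula $\Res(u, v) = \mathrm{lc}(u)^{\deg v} \prod_{\xi} v(\xi)$ over roots $\xi$ of $u$, together with Hensel's lemma to lift common roots of $\bar F'_{\vec k}, \bar F'_{\vec \ell}$ to the ring of integers of a finite extension of $\Q_p$ and track the resulting $p$-adic valuations. The remaining routine details consist of the degree and height estimates on $F_{\vec k}$ (straightforward from the sizes of $\varphi_i$), the effective bound on $\#\mathcal S_1$ furnished by Maurin's theorem (Lemma~\ref{lem:eff_Maurin}), and the injectivity of reduction mod $p$ on $\mathcal S_1$ for all primes $p$ beyond a constant depending only on $\bphi$.
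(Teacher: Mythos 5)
Your approach shares the paper's high‑level skeleton (encode relations as polynomial vanishing, bound common roots by $p$‑adic valuations of resultants, isolate $\mathcal S_1$ as the exceptional contribution), but the resultant decomposition is genuinely different: the paper forms $\Res(F_{\pmb k}-G_{\pmb k},\tilde P_{\pmb\ell})$ after dividing the global exceptional polynomial $W_{\mathcal S_1}$ out of the square‑free part of the $\pmb\ell$‑polynomial, whereas you form $\Res(F'_{\vec k},F'_{\vec\ell})$ after dividing the pairwise gcd $G_{\vec k,\vec\ell}$ out of both factors. Both give $\log T\ll_{\bphi}(KL)^{m+1}$, and your observation that roots of $\bar G_{\vec k,\vec\ell}$ lift to $\mathcal S_1$ is a valid route to the additive constant.

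There is, however, a genuine gap in the case analysis, and it is exactly where the paper's proof turns on the ``multiplicatively independent modulo constants'' hypothesis. Your resultant bound $\#\{\alpha:\bar F'_{\vec k}(\alpha)=\bar F'_{\vec\ell}(\alpha)=0\}\le v_p(R_{\vec k,\vec\ell})$ (the paper's Lemma~\ref{lem:Res}) requires that not both reductions vanish identically modulo $p$, and your equivalence ``$\prod_i\varphi_i(\alpha)^{k_i}=1\iff\bar F_{\vec k}(\alpha)=0$'' requires that neither $\bar F_{\vec k}$ nor $\bar D_{\vec k}$ is the zero polynomial. Since $F_{\vec k}$, $D_{\vec k}$ are coprime only up to units, their individual contents are not controlled, and a priori $p$ could divide the content of $F_{\vec k}$ for some $\vec k$ with $|\vec k|\le K$ — these contents grow like $e^{O_{\bphi}(K)}$, so ``$p$ larger than a constant depending only on $\bphi$'' does not obviously rule this out. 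You invoke the modulo‑constants hypothesis only to conclude $F_{\vec k}\ne 0$ as an integer polynomial, which is a much weaker fact (it already follows from plain multiplicative independence) and is not what is needed. The paper instead proves the needed uniform statement: for $p$ larger than the discriminant of the square‑free part of $f_1g_1\cdots f_mg_m$ (a quantity depending only on $\bphi$), the reductions $\bar\varphi_1,\ldots,\bar\varphi_m$ remain multiplicatively independent modulo constants in $\ov\F_p(X)$, and hence $\bar F_{\pmb k}-\bar G_{\pmb k}\ne 0$ for every non‑zero $\pmb k$. Without this step (or an equivalent argument showing the content of $F_{\vec k}$, $D_{\vec k}$ has no prime factors beyond a $\bphi$‑dependent bound), the claim that $c_2$ depends only on $\bphi$ is unjustified, and the argument as written has a hole. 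The paper's remark following the theorem ($\varphi_1=2,\varphi_2=3$) shows this is not a pedantic worry: if the modulo‑constants assumption is dropped, exactly this mechanism makes the conclusion false.

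A secondary but fixable issue: roots of $G_{\vec k,\vec\ell}$ that happen to be zeros or poles of some $\varphi_j$ with $k_j=\ell_j=0$ need not lie in $\mathcal S_1$ as defined (the paper excludes such points from $\mathcal S_1$ implicitly), so your bound on that case should carry an extra additive term for the finitely many zeros and poles of the $\varphi_i$; this only changes $c_3$.
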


We remark that in Theorem~\ref{thm:Fp-multdep A} we can not remove the condition ``multiplicatively independent modulo constants". 
For example, let $\varphi_1 = 2, \varphi_2 = 3$. Then for every prime $p$ if we choose $K,L$ to be the multiplicative orders of $2$ and  $3$ modulo $p$ respectively, we have that $\#\cA_{\bphi}(p,K,L)$ is infinite. 

We also point out that, bounding the number of elements of $\cA_{\bphi}(p,K,L)$ by the degrees of the corresponding rational functions, we have 
$$
\#\cA_{\bphi}(p,K,L) \ll_{\bphi} \min\{K^{m+1}, L^{m+1}\}. 
$$
However, in Theorem~\ref{thm:Fp-multdep A} $v_p(T)$ can be zero or bounded by $\log_p(T)$, and it gives 
$$
\#\cA_{\bphi}(p,K,L) \ll_{\bphi} \log_p(T) \ll_{\bphi} \frac{(KL)^{m+1}}{\log p}, 
$$
which gives a better bound when $p$ tends to infinity.

Similarly to Theorem~\ref{thm:Fp-multdep A}, we have: 

\begin{theorem}
\label{thm:Fp-multdep D} 
Let $\bphi = (\varphi_1,\ldots,\varphi_m)$ and $\brho = (\varrho_1,\ldots,\varrho_n)$ be  defined as in Corollary~\ref{cor:Fp-multdep}. 
Assume further that $\varphi_1,\ldots,\varphi_m$ are multiplicatively independent modulo constants. 
Then,  there are three effectively computable constants $c_1, c_2, c_3$ depending only on $\bphi$ and $\brho$ 
such that   for   arbitrary integers $K,  L \ge 1$, 
there is a positive integer $T$ with 
$$
\log T  \le c_1 K^{m+1}L^{n+1}
$$
such that for any prime $p > c_2$ and for the set~\eqref{eq:Set D}
we have 
$$
\#\cD_{\bphi, \brho}(p,K,L) \le    v_p(T) + c_3.
$$
\end{theorem}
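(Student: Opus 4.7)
The strategy mirrors that of Theorem~\ref{thm:Fp-multdep A}, but now the two multiplicative relations involve two disjoint families of rational functions. For each pair of non-zero exponent vectors $(\vec k,\vec\ell)\in\Z^m\times\Z^n$ with $\max_i|k_i|\le K$ and $\max_j|\ell_j|\le L$, introduce
\[
f_{\vec k}(X)=\prod_{i=1}^m\varphi_i(X)^{k_i}-1
\mand
g_{\vec\ell}(X)=\prod_{j=1}^n\varrho_j(X)^{\ell_j}-1,
\]
and clear denominators to obtain polynomials $F_{\vec k},G_{\vec\ell}\in\Z[X]$. The hypothesis that $\varphi_1,\ldots,\varphi_m$ are multiplicatively independent modulo constants ensures that $F_{\vec k}$ is a non-constant polynomial of degree and logarithmic height $\ll_{\bphi}K$ for every $\vec k\neq 0$. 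For $G_{\vec\ell}$ one argues via the joint multiplicative independence: either $\prod_j\varrho_j^{\ell_j}$ is non-constant, in which case $G_{\vec\ell}$ has degree and logarithmic height $\ll_{\brho}L$; or it is a constant, necessarily different from $1$ (otherwise $\varrho_1,\ldots,\varrho_n$ would be multiplicatively dependent), so that $g_{\vec\ell}$ has no zeros and the pair contributes nothing.

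I would then split the remaining pairs $(\vec k,\vec\ell)$ according to whether the resultant $R_{\vec k,\vec\ell}:=\Res(F_{\vec k},G_{\vec\ell})$ vanishes. If $R_{\vec k,\vec\ell}=0$, the common zeros of $F_{\vec k}$ and $G_{\vec\ell}$ in $\ovQ$ produce elements satisfying both multiplicative relations simultaneously, and the total number of such $\alpha\in\ovQ$, as $(\vec k,\vec\ell)$ ranges over all pairs, is finite and effectively bounded by an effective Maurin-type result applied to the multiplicatively independent family $\varphi_1,\ldots,\varphi_m,\varrho_1,\ldots,\varrho_n$ (cf.\ Lemma~\ref{lem:eff_Maurin}); the reductions modulo $p$ of these elements contribute the additive constant $c_3$. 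For pairs with $R_{\vec k,\vec\ell}\neq 0$, the reductions $\ov{F}_{\vec k}$ and $\ov{G}_{\vec\ell}$ modulo $p$ can share a zero in $\ov\F_p$ only when $p\mid R_{\vec k,\vec\ell}$, and once $p$ exceeds a threshold $c_2$ depending on $\bphi$ and $\brho$ (absorbing primes dividing leading coefficients or denominators in the construction above) a standard lifting argument bounds the number of distinct common zeros of $\ov{F}_{\vec k}$ and $\ov{G}_{\vec\ell}$ by $v_p(R_{\vec k,\vec\ell})$.

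Summing these contributions over all pairs and setting
\[
T=\prod_{(\vec k,\vec\ell):\,R_{\vec k,\vec\ell}\neq 0}R_{\vec k,\vec\ell},
\]
one concludes that $\#\cD_{\bphi,\brho}(p,K,L)\le v_p(T)+c_3$. A standard height bound for resultants gives $\log|R_{\vec k,\vec\ell}|\ll_{\bphi,\brho}KL$ uniformly, and with $O(K^mL^n)$ such pairs this yields $\log T\le c_1K^{m+1}L^{n+1}$, as required. The main technical point is the inequality bounding the number of distinct common zeros of the mod-$p$ reductions by the $p$-adic valuation of the resultant; beyond this, everything amounts to routine degree and height estimates that mirror the proof of Theorem~\ref{thm:Fp-multdep A}, with minor adjustments for the product structure arising from the two disjoint families of rational functions.
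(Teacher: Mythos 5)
Your overall plan---clear denominators, take resultants over all pairs of exponent vectors, collect them into $T$, and invoke an effective Maurin-type finiteness for the ``degenerate'' pairs---is in the right spirit, but two cases are not disposed of correctly and in both the gap is genuine rather than cosmetic. First, when $\prod_j \varrho_j^{\ell_j}$ is a non-zero constant $c\ne 1$ (which can happen under the hypotheses of Corollary~\ref{cor:Fp-multdep}, since the $\varrho_j$ are not assumed multiplicatively independent modulo constants), you declare that the pair ``contributes nothing.'' Over $\ovQ$ that is true, but over $\ov\F_p$ the reduction of $g_{\pmb{\ell}}=c-1$ vanishes identically whenever $p$ divides the numerator of $c-1$; for such $p$ every $\alpha$ satisfies the second relation, and $\cD_{\bphi,\brho}(p,K,L)$ swallows every $\alpha$ with $\varphi_1(\alpha),\ldots,\varphi_m(\alpha)$ $K$-multiplicatively dependent, a set of size that grows with $K$. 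Since $c_2$ may not depend on $L$ while the height of $c-1$ does, you cannot absorb these primes into the threshold; you must keep those pairs and account for them via $v_p(T)$ (here $\Res(F_{\pmb{k}},G_{\pmb{\ell}})=G_{\pmb{\ell}}^{\deg F_{\pmb{k}}}$ already gives the right valuation, but you have explicitly removed these factors from your product $T$).

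Second, when $R_{\pmb{k},\pmb{\ell}}=0$ you assert that the only common zeros of $\overline{F_{\pmb{k}}}$ and $\overline{G_{\pmb{\ell}}}$ in $\ov\F_p$ are reductions of the common zeros over $\ovQ$. That is false: writing $d=\gcd(F_{\pmb{k}},G_{\pmb{\ell}})$, $F_{\pmb{k}}=dF'$, $G_{\pmb{\ell}}=dG'$ with $\Res(F',G')\ne 0$, any prime $p\mid\Res(F',G')$ produces a common root of $\overline{F'}$ and $\overline{G'}$ that is not the reduction of a root of $d$ and hence not covered by your additive $c_3$; your $T$ omits $\Res(F',G')$ for precisely these pairs, so $\#\cD_{\bphi,\brho}(p,K,L)\le v_p(T)+c_3$ is not established. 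Both issues are exactly what the paper's device of replacing $F_{\pmb{\ell}}-G_{\pmb{\ell}}$ by $\tilde P_{\pmb{\ell}}$ (its square-free part divided by the gcd with the auxiliary polynomial $W_{\cS_1}$ built from the Maurin-finite set $\cS_1$) is designed to eliminate: by construction $\tilde P_{\pmb{\ell}}$ shares no root over $\ovQ$ with $F_{\pmb{k}}-G_{\pmb{k}}$, so $\Res(F_{\pmb{k}}-G_{\pmb{k}},\tilde P_{\pmb{\ell}})\ne 0$ for every pair, the constant and vanishing-resultant subcases never arise, and a single uniform application of Lemma~\ref{lem:Res} gives $v_p(T)$ with the removed roots absorbed into $W_{\cS_1}$, i.e.\ into $c_3$. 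You should mimic that construction rather than trying to peel off degenerate pairs by hand.
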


\subsection{Multiplicative dependence and linear dependence}

We fix an elliptic curve $E$ defined by \eqref{eq:EC}. Given $\bphi=(\varphi_1, \ldots, \varphi_m) \in \Q(X)^m$ and $\brho=(\varrho_1, \ldots, \varrho_n) \in \Q(X)^n$ two vectors of non-zero rational functions, we define
\begin{equation*}
\begin{split}
 \mathcal{S}_2 =& \big\{ \alpha  \in \overline{\Q}: 
 \varphi_1(\alpha), \ldots,\varphi_m(\alpha) \text{ are multiplicatively}\\
&\qquad\qquad\qquad \text{dependent and $(\varrho_1(\alpha),\cdot), \ldots,(\varrho_n(\alpha),\cdot)$ are linearly dependent}\big\}.
\end {split}
\end{equation*}
In defining $\mathcal{S}_2$, we implicitly exclude the poles and zeros of $\varphi_1,\ldots,\varphi_m$ and the poles of $\varrho_1,\ldots,\varrho_n$.

Under the conditions of Theorem~\ref{thm:EC} below on $\bphi$ and $\brho$, the set $\cS_2$ is finite, as proved in  Lemma~\ref{lem:new2.7}.

For positive integers $K,L\geq 1$ and prime $p$,
define the set 
\begin{equation}
\label{eq:Set B}
\begin{split}
 \cB_{\bphi, \brho, E}(p,K,L) =& \big\{ \alpha  \in \ov \F_p:~ \varphi_1(\alpha), \ldots,\varphi_m(\alpha) \text{ are $K$-multiplicatively}\\
&\quad \text{dependent and $(\varrho_1(\alpha),\cdot), \ldots,(\varrho_n(\alpha),\cdot)$ are $L$-linearly dependent}\big\}.
\end {split}
\end{equation}
In defining $\cB_{\bphi, \brho, E}(p,K,L)$, we implicitly assume that the reductions of the rational functions $\varphi_1,\ldots,\varphi_m, \varrho_1, \ldots, \varrho_n$ modulo $p$ are all well-defined, as well as the reduction $E_p$ is also an elliptic curve.   Moreover, we implicitly exclude the poles and zeros of the reductions of  $\varphi_1,\ldots,\varphi_m$ modulo $p$ and the poles of the reductions of  $\varrho_1,\ldots,\varrho_n$ modulo $p$. This applies to the other sets in the modulo $p$ setting.

For the cardinality $\# \cB_{\bphi, \brho, E}(p,K,L)$, we have:

\begin{theorem}
\label{thm:EC}
Let $E$ be an elliptic curve defined by \eqref{eq:EC}, 
and let $\bphi=(\varphi_1,\dots, \varphi_m)$ and $\brho =(\varrho_1,\dots,\varrho_n)$ whose components are all non-zero rational functions in $\Q(X)$ such that 
$\varphi_1,\dots, \varphi_m$ are multiplicatively independent and 
the points $(\varrho_1(X), \cdot), \ldots, (\varrho_n(X), \cdot)$ in $E(\overline{\Q(X)})$ are linearly independent over $\Z$.  
Suppose moreover that at least one of the following conditions holds:
	\begin{enumerate}
		\item $\varphi_1, \dots , \varphi_m $ are multiplicatively independent modulo constants;
		\item the points $(\varrho_1(X), \cdot), \ldots, (\varrho_n(X), \cdot)$ are linearly independent over the endomorphism ring $\mathrm{End}(E)$ modulo points in $E(\overline{\Q})$.
\end{enumerate}	
Then, there exists an effectively computable constant  $c_1$ depending only on  $\bphi, \brho, E$ such that for any $p>\exp (c_1 K L^2)$, 
for the set \eqref{eq:Set B} we have
$$
\#\cB_{\bphi, \brho, E}(p,K,L)  \leq \# \mathcal S_2,
$$
where the elements of $\cB_{\bphi, \brho, E}(p,K,L)$ come from the reduction modulo $p$ of elements of $\mathcal S_2$, 
and $\# \mathcal S_2$ is effectively upper bounded when $n = 1$. 
\end{theorem}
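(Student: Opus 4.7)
The plan is to interpret the statement as an unlikely intersection problem on the split semiabelian variety $G = \Gm^m \times E^n$. Define the morphism
$$
\psi : X \longmapsto \bigl(\varphi_1(X),\ldots,\varphi_m(X),\, (\varrho_1(X),\cdot),\ldots,(\varrho_n(X),\cdot)\bigr),
$$
whose image $\cC \subset G$ is an irreducible curve defined over $\Q$. The multiplicative independence of $\varphi_1,\ldots,\varphi_m$, the linear independence over $\Z$ of the $(\varrho_j(X),\cdot)$, and either of hypotheses (1) or (2), together guarantee that $\cC$ is not contained in any proper algebraic subgroup of $G$. I would first invoke the general statement on intersections of such a curve with algebraic subgroups of codimension $\ge 2$ announced in the abstract (and recorded as Lemma~\ref{lem:new2.7}) to conclude that $\mathcal S_2$ is finite, with an effective bound when $n=1$.

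Next I would show that, for $p$ large enough (quantified by $\exp(c_1 K L^2)$), every $\alpha \in \cB_{\bphi,\brho,E}(p,K,L)$ arises as the reduction modulo $p$ of an element of $\mathcal S_2$. Pick any $\alpha \in \cB_{\bphi,\brho,E}(p,K,L)$ together with integer vectors $(k_1,\ldots,k_m)$, $(\ell_1,\ldots,\ell_n)$ realising the $K$-multiplicative and $L$-linear dependencies in $\overline{\F}_p$ and $E_p$ respectively. Lift $\alpha$ to some $\widetilde\alpha \in \overline{\Q}$ (of bounded degree in terms of $\bphi,\brho$), and consider the two algebraic subgroup conditions on $G$
$$
\prod_{i=1}^m x_i^{k_i} = 1 \mand \sum_{j=1}^n \ell_j\,Q_j = O,
$$
which together cut out a subgroup $H \subset G$ of codimension $2$. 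The point $\psi(\widetilde\alpha)$ reduces modulo $p$ to a point of $H \bmod p$, although it need not itself lie in~$H$.

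The quantitative heart of the argument is to control the obstruction to lifting these relations. Using the standard bounds comparing the naive height on $\Gm^m$, the Néron--Tate canonical height on $E^n$, and the Weil height of $\widetilde\alpha$, both
$$
u := \prod_{i=1}^m \varphi_i(\widetilde\alpha)^{k_i} - 1 \in \overline\Q \mand v := \sum_{j=1}^n \ell_j\,(\varrho_j(\widetilde\alpha),\cdot) \in E(\overline\Q)
$$
have height bounded by $O_{\bphi}(K)$ and $O_{\brho,E}(L^2)$ respectively, with the $L^2$ coming from the quadratic nature of the elliptic height pairing. Taking norms down to $\Q$, if $(u,v) \ne (0,O)$ we obtain a nonzero integer $T$ with $\log|T| \ll_{\bphi,\brho,E} K L^2$ which must be divisible by $p$. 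For $p > \exp(c_1 K L^2)$ this is impossible, forcing $u=0$ and $v=O$, i.e.\ $\widetilde\alpha \in \mathcal S_2$. Since the fibres of the reduction $\overline\Z \to \overline\F_p$ on the (finite) set $\mathcal S_2$ are singletons for large $p$, we conclude $\#\cB_{\bphi,\brho,E}(p,K,L) \le \#\mathcal S_2$.

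The main obstacle is carrying out the elliptic half of the height estimate cleanly: one must bound the canonical height of $\sum \ell_j(\varrho_j(\widetilde\alpha),\cdot)$ uniformly in the $\ell_j$'s with the correct $L^2$ dependence, deal with the sign ambiguity in choosing the $y$-coordinates $\beta_j$ compatibly between characteristic $0$ and characteristic $p$ (which is handled by the remark after the definition of $L$-linear dependence), and translate a nonzero point of bounded canonical height on $E(\overline\Q)$ into a genuine nonzero integer divisible by $p$ via a resultant-style construction. The multiplicative side of the argument is a direct extension of the reasoning underlying Theorem~\ref{thm:Fp-A}, and the extra hypothesis (1) or (2) is used precisely to rule out the degenerate case in which the codimension-$2$ subgroup $H$ actually contains~$\cC$.
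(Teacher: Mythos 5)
The first step of your proposal—invoking Lemma~\ref{lem:new2.7} (resting on the unlikely-intersection result of Section~\ref{sect:UI}) to conclude that $\mathcal S_2$ is finite, with effectivity when $n=1$ via Lemma~\ref{lem:tors}—matches the paper. The second half, however, has a genuine gap. You propose to lift $\alpha\in\ov\F_p$ to some $\widetilde\alpha\in\ovQ$ of ``bounded degree in terms of $\bphi,\brho$'' and then bound the heights of $u=\prod\varphi_i(\widetilde\alpha)^{k_i}-1$ and of the elliptic sum $v$. Neither claim survives scrutiny: the degree of $\alpha$ over $\F_p$ is only bounded by $\deg U_{\pmb\ell}\ll_{\brho}L^2$, not by a constant, and more seriously there is no lift of $\alpha$ to $\ovQ$ with height bounded independently of $p$. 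Any lift obtained from a lift of the minimal polynomial of $\alpha$ has coefficients of size up to $p$, so $h(\widetilde\alpha)\gg\log p$; then $h(u)\ll K\,h(\widetilde\alpha)$ and the norm from $\Q(\widetilde\alpha)$ to $\Q$ picks up the factor $[\Q(\widetilde\alpha):\Q]\gg L^2$, giving $\log|T|\gg KL^2\log p$, which is useless against the threshold $p>\exp(c_1KL^2)$. In addition, ``taking norms'' of $v\in E(\ovQ)$ down to an integer divisible by $p$ is not a self-evident operation and would require precisely the division/summation polynomial machinery that your sketch omits.

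The paper avoids lifting $\alpha$ entirely. The two relations are encoded by $\alpha$ being a common root modulo $p$ of two explicit polynomials in $\Z[X]$: $F_{\pmb{k}}-G_{\pmb{k}}$ (the numerator of $\prod\varphi_i^{k_i}-1$) and $\tilde U_{\pmb\ell}$, the square-free part of the numerator of $\Theta_{\pmb\ell}$ with the factor carrying $\mathcal S_2$ (the polynomial $W_{\mathcal S_2}$) divided out; here $\Theta_{\pmb\ell}$ is built from division polynomials ($n=1$) or summation polynomials ($n\ge 2$) so that it vanishes exactly where the elliptic relation holds, working entirely on $x$-coordinates and thereby sidestepping the sign issue you raise. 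Since by Lemma~\ref{lem:new2.7} the system $F_{\pmb{k}}-G_{\pmb{k}}=\tilde U_{\pmb\ell}=0$ has no common root over $\C$, the resultant $R_{\pmb{k},\pmb\ell}\in\Z$ is nonzero, and the height estimates for division and summation polynomials (Lemmas~\ref{lem:Psi-height}, \ref{lem:division}, \ref{lem:height_of_summation}) combined with Lemma~\ref{lem:res} give $\log|R_{\pmb{k},\pmb\ell}|\ll_{\bphi,\brho,E}KL^2$. For $p>\exp(c_1KL^2)$ the resultant is prime to $p$, so the two polynomials have no common root in $\ov\F_p$, forcing any $\alpha\in\cB_{\bphi,\brho,E}(p,K,L)$ to be a root of $W_{\mathcal S_2}$ modulo $p$ and hence to come from $\mathcal S_2$. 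This resultant-divisibility argument is the quantitative heart of the proof and cannot be replaced by a lift-and-norm argument of the kind you describe.
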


Taking $m = n =1$ and  $K = L =\rf{c_3(\log p)^{1/3}}$ for some effectively computable constant $c_3$ 
depending only on $E, \varphi=\varphi_1$ and  $\varrho=\varrho_1$  in Theorem~\ref{thm:EC}, we get: 

\begin{corollary}
\label{cor:Ord3} 
Let  $\varphi, \varrho \in \Q(X)$ be non-constant rational functions. 
Then,  there exist three effectively computable constants $c_1, c_2, c_3$  depending only on $\varphi, \varrho, E$ 
such that for any prime $p > c_1$ and for all but $c_2$ elements $\alpha \in \ov \F_p$ we have 
$$
\max\{\ord_p (\varphi(\alpha)), \ord_{E_p} (\varrho(\alpha))\}  \ge c_3 (\log p)^{1/3}. 
$$
\end{corollary}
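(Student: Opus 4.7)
The plan is to apply Theorem~\ref{thm:EC} directly with $m=n=1$, $\varphi_1=\varphi$, $\varrho_1=\varrho$, and the choice $K=L=\lceil c_3(\log p)^{1/3}\rceil$ announced in the statement, where $c_3$ will be pinned down only after $c_1=c_1(\varphi,\varrho,E)$ from Theorem~\ref{thm:EC} has been produced. The key observation is that if an element $\alpha\in\ov\F_p$ (avoiding the finitely many zeros and poles of $\varphi$ and poles of $\varrho$) fails the desired conclusion, then both $\ord_p(\varphi(\alpha))$ and $\ord_{E_p}(\varrho(\alpha))$ are positive integers strictly less than $c_3(\log p)^{1/3}$. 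Taking $k_1=\ord_p(\varphi(\alpha))\le K$ certifies that $\varphi(\alpha)$ is $K$-multiplicatively dependent, and taking $k_1=\ord_{E_p}(\varrho(\alpha))\le L$ certifies that $(\varrho(\alpha),\cdot)$ is $L$-linearly dependent on $E_p$. Hence every such exceptional $\alpha$ belongs to the set $\cB_{\varphi,\varrho,E}(p,K,L)$ in \eqref{eq:Set B}.

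Before invoking Theorem~\ref{thm:EC}, I would verify its hypotheses for $m=n=1$. Since $\varphi$ is non-constant, it is not a root of unity in $\Q(X)^*$ and, moreover, does not lie in $\Q^*$, so it is multiplicatively independent and multiplicatively independent modulo constants, giving condition (1). For $\varrho$ non-constant, the point $(\varrho(X),\cdot)\in E(\overline{\Q(X)})$ is not defined over $\overline{\Q}$, and since $E(\overline{\Q(X)})_{\mathrm{tors}}=E(\overline{\Q})_{\mathrm{tors}}$ it cannot be torsion, which gives the required $\Z$-linear independence. With these hypotheses satisfied, Theorem~\ref{thm:EC} produces $c_1=c_1(\varphi,\varrho,E)$ and yields $\#\cB_{\varphi,\varrho,E}(p,K,L)\le \#\cS_2$ whenever $p>\exp(c_1 K L^2)$, and in the case $n=1$ the quantity $\#\cS_2$ is effectively bounded by a constant depending only on $\varphi,\varrho,E$.

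The remaining step is the arithmetic of exponents: with the choice $K=L=\lceil c_3(\log p)^{1/3}\rceil$ one has $KL^2 \le 2 c_3^3 \log p$ for $p$ large, so
$$\exp(c_1 K L^2) \le p^{2 c_1 c_3^3}.$$
Fixing $c_3=c_3(\varphi,\varrho,E)$ small enough that $2 c_1 c_3^3<1$ ensures $\exp(c_1 KL^2)<p$ for all sufficiently large $p$, say $p>c_1'$. Combining, for every prime $p>c_1'$ the bound $\#\cB_{\varphi,\varrho,E}(p,K,L)\le \#\cS_2$ applies, and absorbing the finitely many zeros/poles of $\varphi$ and poles of $\varrho$ into the exceptional set yields the corollary with $c_2=\#\cS_2+O_{\varphi,\varrho}(1)$.

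I do not expect any significant obstacle here, as the corollary is a straightforward quantitative specialisation of Theorem~\ref{thm:EC}: the only delicate point is the ordering of quantifiers when tuning $c_3$ against the constant $c_1$ produced by Theorem~\ref{thm:EC}, which just needs to be stated carefully to make clear that $c_3$ depends on $\varphi,\varrho,E$ only through $c_1$.
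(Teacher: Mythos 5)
Your proposal is correct and follows essentially the same route as the paper: the paper proves this corollary by specialising Theorem~\ref{thm:EC} to $m=n=1$ with the choice $K=L=\lceil c_3(\log p)^{1/3}\rceil$, exactly as you do, and the hypothesis-checking for non-constant $\varphi,\varrho$ and the tuning of $c_3$ against the constant from Theorem~\ref{thm:EC} so that $\exp(c_1 KL^2) < p$ are precisely the content of the (very brief) remark accompanying the corollary in the paper.
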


In Corollary~\ref{cor:Ord3}, we implicitly assume that $\alpha$ is neither a pole nor a zero of $\varphi$  
and also $\alpha$ is not a pole of $\varrho$. 
This applies to similar circumstances in the sequel.

A result of Voloch \cite[Theorem 4.1]{Voloch2} roughly states that for a point $P$ on a fixed elliptic curve over a finite field, 
under some conditions about the order of $P$ and the degree of the field generated by $P$, the order of the $y$-coordinate of $P$ is large. 
So, Corollary~\ref{cor:Ord3} is somehow an extension of Voloch's result in large characteristic.

Moreover, we obtain an asymptotic result compared to Corollary~\ref{cor:Ord3}.

\begin{theorem}
\label{thm:Ord-B} 
Let $E, \bphi=(\varphi_1,\ldots,\varphi_m), \brho=(\varrho_1, \ldots, \varrho_n)$ be  defined as in Theorem~\ref{thm:EC}.  
Then, there exist an effectively computable constant  $c_1$  and a constant $c_2$ 
both depending only on  $\bphi, \brho, E$ 
such that  as $N \to \infty$, for all but $c_1 N(\log N)^{-2}$  primes $p \le N$ and for all but at most 
$c_2$ elements $\alpha \in \ov{\F}_p$,  either  the order of the subgroup 
 $\langle \varphi_1(\alpha), \ldots,\varphi_m(\alpha) \rangle$ in $\ov{\F}_p^*$ or the order of the subgroup 
  $\langle (\varrho_1(\alpha), \cdot), \ldots, (\varrho_n(\alpha), \cdot) \rangle$ in $E(\ov{\F}_p)$ is at least 
$$
N^{mn/(2mn +2m+n)}(\log N)^{-1/2}.
$$ 
Moreover, when $n=1$, the constant $c_2$ is also effectively computable. 
\end{theorem}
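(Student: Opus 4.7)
The plan is to follow the template of the proof of Theorem~\ref{thm:Ord-A}, replacing the purely multiplicative ingredient Theorem~\ref{thm:Fp-multdep D} with an effective refinement of Theorem~\ref{thm:EC} of the same flavour. First, I would fix the threshold
\[
M = M(N) := N^{mn/(2mn+2m+n)}(\log N)^{-1/2}, \qquad K := \lceil M^{1/m}\rceil, \qquad L := \lceil M^{1/n}\rceil,
\]
and for each prime $p \le N$ of good reduction let $\cE_p \subset \ov{\F}_p$ be the set of $\alpha$ (outside the relevant poles and zeros) such that both $|\langle \varphi_1(\alpha),\ldots,\varphi_m(\alpha)\rangle|$ and $|\langle (\varrho_1(\alpha),\cdot),\ldots,(\varrho_n(\alpha),\cdot)\rangle|$ are at most $M$. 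A standard pigeonhole argument applied separately to the $(\lfloor M^{1/m}\rfloor+1)^m$ products $\prod_i \varphi_i(\alpha)^{a_i}$ and the $(\lfloor M^{1/n}\rfloor+1)^n$ sums $\sum_j a_j (\varrho_j(\alpha),\cdot)$ with non-negative exponents produces, for each $\alpha \in \cE_p$, nontrivial integer relations with $|k_i|\le K$ and $|\ell_j| \le L$; hence $\cE_p \subseteq \cB_{\bphi,\brho,E}(p,K,L)$.

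The crucial step I would then carry out is the following effective refinement of Theorem~\ref{thm:EC}, modelled on Theorem~\ref{thm:Fp-multdep D}: one extracts from the resultant/height machinery of Theorem~\ref{thm:EC} a positive integer $T$, depending only on $\bphi, \brho, E, K, L$, with
\[
\log T \le c_1 K^{m+1} L^{n+2},
\]
such that $\# \cB_{\bphi,\brho,E}(p,K,L) \le v_p(T) + \# \mathcal S_2$ for every prime $p$ exceeding a constant depending only on $\bphi, \brho, E$. Informally, $T$ is built from resultants, discriminants and elliptic division-polynomial data, and its prime support records the primes of bad specialisation of the finitely many exceptional algebraic points of $\mathcal S_2$ together with the auxiliary polynomial relations they satisfy on $\Gm^m \times E^n$. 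The exponents $m+1$ in $K$ and $n+2$ in $L$ are forced by the target $M$: they are the unique integer pair for which, upon substitution, $\log T \ll N$, the largest bound compatible with the Hardy--Ramanujan estimate.

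Granting this, substituting $K, L$ in terms of $M$ yields $\log T \le c_1 M^{(2mn+2m+n)/(mn)} \le c_1 N (\log N)^{-(2mn+2m+n)/(2mn)}$ and $\log \log T = (1+o(1)) \log N$. The Hardy--Ramanujan bound $\omega(T) \ll \log T / \log \log T$ then shows that at most $c_1' N/(\log N)^{1 + (2mn+2m+n)/(2mn)} \ll N/(\log N)^2$ primes $p \le N$ have $v_p(T) \ge 1$. For every remaining prime we get $v_p(T) = 0$, and thus
\[
\# \cE_p \le \# \cB_{\bphi,\brho,E}(p,K,L) \le \# \mathcal S_2 =: c_2,
\]
which is the claimed bound; moreover $c_2$ is effective precisely when $n=1$, in accordance with the corresponding clause of Theorem~\ref{thm:EC}.

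The hard part will be the construction of $T$ in the second step: Theorem~\ref{thm:EC} as stated controls $\# \cB_{\bphi,\brho,E}(p,K,L)$ only for $p > \exp(c_1 KL^2)$, which discards far too many primes $\le N$ to permit any $M$ polynomial in $N$. One must therefore re-enter the proof of Theorem~\ref{thm:EC}, isolate the integers that govern the reduction modulo $p$ of the finitely many exceptional points of $\mathcal S_2$ and of the auxiliary relations on $\Gm^m \times E^n$ they satisfy, and bundle them into the single integer $T$ --- exactly the split semiabelian analogue of the passage from Theorem~\ref{thm:Fp-A}/Corollary~\ref{cor:Fp-multdep} to Theorems~\ref{thm:Fp-multdep A} and~\ref{thm:Fp-multdep D} in the torus case. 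Once this $v_p(T)$-bound is in place, the remainder of the argument is the routine pigeonhole-plus-Hardy--Ramanujan calculation above.
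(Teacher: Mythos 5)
Your proposal is correct and follows essentially the same route as the paper: set $T$ to be the product of the resultants $|R_{\pmb{k},\pmb{\ell}}|$ over all $\pmb{k}\in\{0,\pm1,\dots,\pm K\}^m\setminus\{0\}$, $\pmb{\ell}\in\{0,\pm1,\dots,\pm L\}^n\setminus\{0\}$, bound $\log T \ll K^{m+1}L^{n+2}$ via \eqref{eq:Bound R2}, choose $K,L$ so that $K^m\approx L^n \approx M = N^{mn/(2mn+2m+n)}(\log N)^{-1/2}$ (giving exactly the paper's $K,L$), and apply the Hardy--Ramanujan estimate to discard $O(N(\log N)^{-2})$ primes. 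For the surviving primes the system \eqref{eq:OT} has no solutions, and the pigeonhole step converts failure of $K$-multiplicative (resp. $L$-linear) dependence into a lower bound $>K^m$ (resp. $>L^n$) on the corresponding subgroup order, which is $\geq M$.

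Two small points of hygiene. First, you invoke a $v_p(T)$-style bound ``modelled on Theorem~\ref{thm:Fp-multdep D}'' --- this is exactly Theorem~\ref{thm:EC-multdep B}, which is already in the paper, but note it requires hypothesis (1) of Theorem~\ref{thm:EC} (the multiplicative independence modulo constants). Theorem~\ref{thm:Ord-B} is stated under the weaker hypothesis (1) or (2). The fix is implicit in your write-up: you only use $v_p(T)=0$, not the full per-prime inequality, and the implication ``$p\nmid R_{\pmb{k},\pmb{\ell}}$ for all $\pmb{k},\pmb{\ell}$ in range $\Rightarrow$ no solutions of \eqref{eq:OT} over $\ov\F_p$'' does not need condition (1); so one should avoid routing through Theorem~\ref{thm:EC-multdep B} and instead transplant the argument of Theorem~\ref{thm:Ord2} directly, which is what the paper does. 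Second, you should also include the analogue of the auxiliary integer $J=\prod H(F_{\pmb{k}}-G_{\pmb{k}})$ from the proof of Theorem~\ref{thm:Ord2} to dispose of the exceptional case when $F_{\pmb{k}}-G_{\pmb{k}}$ is a nonzero constant; since $\log J\ll K^{m+1}$, this does not affect the prime count. With these adjustments the proof is complete.
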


Finally, one can also obtain a trade-off between the number of possible exceptional 
values $\alpha$  and the parameters $K, L$. 

\begin{theorem}
\label{thm:EC-multdep B} 
Let $E, \bphi=(\varphi_1,\ldots,\varphi_m), \brho = (\varrho_1, \ldots, \varrho_n)$ be  defined as in Theorem~\ref{thm:EC} 
such that the condition $(1)$ therein is satisfied.  
Then, there exist two effectively computable constants  $c_1, c_2$  and a constant $c_3$ 
all depending only on  $\bphi, \brho, E$ such that  for   arbitrary integers $K,  L \ge 1$, 
there is a positive integer $T$ with 
$$
\log T  \le c_1 K^{m+1}L^{n+2}
$$
such that for any prime $p > c_2$ for which the reduction $E_p$ of $E$ is also an elliptic curve 
 and for the set~\eqref{eq:Set B}
we have 
$$
\#\cB_{\bphi, \brho, E}(p,K,L) \le    v_p(T) + c_3. 
$$
Moreover, when $n=1$, the constant $c_3$ is also effectively computable. 
\end{theorem}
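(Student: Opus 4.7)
The plan is to extend the resultant-based approach used for Theorems~\ref{thm:Fp-multdep A} and~\ref{thm:Fp-multdep D} to the present mixed torus--elliptic setting. For each non-zero pair of exponent vectors $(\vec k,\vec\ell)\in\Z^m\times\Z^n$ with $\|\vec k\|_\infty\le K$ and $\|\vec\ell\|_\infty\le L$, I construct two integer polynomials. The polynomial $F_{\vec k}(X)\in\Z[X]$ is obtained by clearing denominators in $\prod_{i=1}^m\varphi_i(X)^{k_i}-1$; hypothesis~(1) ensures $F_{\vec k}$ is non-constant, and standard estimates yield $\deg F_{\vec k},\,h(F_{\vec k})\ll_{\bphi}K$. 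The polynomial $G_{\vec\ell}(X)\in\Z[X]$ is built from the division polynomials $\psi_N$ together with iterated applications of the group law on $E$, encoding the condition $\sum_{i=1}^n\ell_i(\varrho_i(\alpha),\cdot)=O$ in an $x$-only (hence sign-invariant) way. Using $\deg\psi_N=O(N^2)$ and careful tracking of degrees, one obtains
$$
\deg G_{\vec\ell},\,h(G_{\vec\ell})\ll_{\brho,E}L^2.
$$
By design, every $\alpha\in\cB_{\bphi,\brho,E}(p,K,L)$ is a common root, modulo $p$, of $F_{\vec k}$ and $G_{\vec\ell}$ for some such pair.

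For pairs $(\vec k,\vec\ell)$ for which $F_{\vec k}$ and $G_{\vec\ell}$ are coprime in $\Q[X]$, the resultant $\mathrm{Res}(F_{\vec k},G_{\vec\ell})$ is a non-zero integer satisfying
$$
\log|\mathrm{Res}(F_{\vec k},G_{\vec\ell})|\ll \deg F_{\vec k}\cdot h(G_{\vec\ell})+\deg G_{\vec\ell}\cdot h(F_{\vec k})\ll_{\bphi,\brho,E}KL^2.
$$
I take $T$ to be the product of these resultants over the $\ll K^mL^n$ coprime pairs, which gives
$$
\log T\ll_{\bphi,\brho,E} K^{m+1}L^{n+2},
$$
as required. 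Any $\alpha$ arising from a coprime pair forces $p$ to divide the corresponding resultant, so the contribution of such $\alpha$ to $\#\cB_{\bphi,\brho,E}(p,K,L)$ is at most $v_p(T)$.

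The remaining $\alpha$ come from pairs where $F_{\vec k}$ and $G_{\vec\ell}$ share a non-trivial common factor in $\Q[X]$. Such a common factor produces a curve of characteristic-zero solutions satisfying both dependence conditions, whose roots reduce to elements of $\cS_2$. By Lemma~\ref{lem:new2.7}, $\cS_2$ is finite (and effectively bounded when $n=1$), giving the absolute constant $c_3$ (effective precisely when $n=1$, in line with the statement). The principal obstacle is the construction of $G_{\vec\ell}$ with the claimed bound $O(L^2)$ uniformly in $\vec\ell$ and $n$: starting from the $x$-coordinate of $\ell_i P_i$ given by $\phi_{\ell_i}(\varrho_i(X))/\psi_{\ell_i}^2(\varrho_i(X))$, one must iteratively combine these via the elliptic addition law in a symmetric (sign-free) form while controlling both denominators and coefficient growth. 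Once this is in place, the resultant computation and the $v_p(T)$ count proceed essentially as in the proof of Theorem~\ref{thm:Fp-multdep D}.
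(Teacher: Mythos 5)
Your overall framework is the right one and matches the paper's: reduce the two dependence conditions to common-root counting of explicit integer polynomials, and control that count via $v_p$ of a product of resultants, as in Theorem~\ref{thm:Fp-multdep A}. The degree and height estimates you target ($\deg,\h\ll K$ on the torus side, $\deg,\h\ll L^2$ on the elliptic side, $\log|\Res|\ll KL^2$, $\log T\ll K^{m+1}L^{n+2}$) agree with what the paper actually proves. However, there are two genuine gaps.

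First, you explicitly leave open the construction of the elliptic-side polynomial $G_{\vec\ell}$, calling it ``the principal obstacle.'' The paper resolves it using Semaev's summation polynomials: for $n\ge 2$ one sets $\Theta_{\pmb{\ell}}=\sigma_n\bigl(\tfrac{\phi_{\ell_1}}{\psi_{\ell_1}^2}\circ\varrho_1,\dots,\tfrac{\phi_{\ell_n}}{\psi_{\ell_n}^2}\circ\varrho_n\bigr)$ (and $\Theta_{\ell_1}=\Psi_{\ell_1}\circ\varrho_1$ for $n=1$), with $U_{\pmb\ell}$ its numerator. Lemma~\ref{lem:summation-polys} gives the sign-free encoding of $\sum\ell_i(\varrho_i(\alpha),\cdot)=O$, and Lemma~\ref{lem:height_of_summation} gives $\h(\sigma_n)=\exp(O(n))$, which together with Lemmas~\ref{lem:hight-composition}, \ref{lem:Psi-height}, \ref{lem:division} yield $\deg\Theta_{\pmb\ell},\h(\Theta_{\pmb\ell})\ll_{\brho,E}L^2$ with constants depending only on $\brho,E$ (not on $n$ beyond that). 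Without this, your degree/height claim for $G_{\vec\ell}$ is unsubstantiated.

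Second, your branching on whether $F_{\vec k}$ and $G_{\vec\ell}$ are coprime over $\Q$ is not tight. For a non-coprime pair, you argue that the common roots come from the common factor, whose roots lie in $\cS_2$; but modulo $p$ the two polynomials can acquire additional common roots not arising from the characteristic-zero gcd, and those are not controlled by $\#\cS_2$ alone. The paper avoids this by pre-removing the $\cS_2$ locus: it sets $\tilde{U}_{\pmb\ell}$ to be the square-free part of $U_{\pmb\ell}$ with all factors dividing $W_{\cS_2}$ removed, so that $\Res(F_{\pmb k}-G_{\pmb k},\tilde{U}_{\pmb\ell})$ is nonzero for \emph{every} pair (no branching), and any $\alpha\in\cB$ not caught by $\tilde{U}_{\pmb\ell}$ is a root of $W_{\cS_2}\bmod p$, contributing at most $\#\cS_2=c_3$. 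You also need to invoke Lemma~\ref{lem:Res} (common roots counted with multiplicity bounded by $v_p(\Res)$), which in turn requires that $F_{\pmb k}-G_{\pmb k}$ does not vanish modulo $p$; this is precisely where condition~(1) (multiplicative independence modulo constants) and the constant $c_2$ enter, as in the proof of Theorem~\ref{thm:Fp-multdep A}: one takes $p$ larger than the discriminant of the square-free part of $f_1g_1\cdots f_mg_m$, independently of $K,L$. Your statement that hypothesis~(1) ``ensures $F_{\vec k}$ is non-constant'' is not quite the point; what is needed is non-vanishing modulo $p$.
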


\subsection{Linear dependence with two independent relations}    
Given $\brho=(\varrho_1, \ldots, \varrho_n) \in \mathbb Q(X)^n$ a vector of non-zero rational functions, define the set
\begin{equation*}
\begin{split}
\cS_3= & \Big\{\alpha  \in \ov \Q: \ \sum_{i=1}^n k_i(\varrho_i(\alpha),\cdot)=\sum_{i=1}^n\ell_i(\varrho_i(\alpha),\cdot)=O   \text{ for some linearly}\\
&\qquad\qquad\qquad\qquad\qquad\qquad \text{ independent }\  (k_1,\ldots,k_n),(\ell_1,\ldots,\ell_n)\in\Z^n \Big\}.
\end {split}
\end{equation*}
In defining $\mathcal{S}_3$, we implicitly  exclude the poles of $\varrho_1,\ldots,\varrho_n$.

As mentioned in the introduction, under the conditions of Theorem~\ref{thm:EC-C} below on $\brho$, the set $\cS_3$ was proved to be finite by Viada~\cite{Viada2008} and Galateau~\cite{Gala}, see Lemma~\ref{lem:Gala}.

For positive integers $K,L\geq 1$ and prime $p$,
define the set:
\begin{equation}
\label{eq:Set C}
\begin{split}
\mathcal{C}_{\brho,E}&(p,K,L) = \Big\{\alpha  \in \ov \F_p: \ \sum_{i=1}^n k_i(\varrho_i(\alpha),\cdot)=\sum_{i=1}^n\ell_i(\varrho_i(\alpha),\cdot)=O   \text{ for some linearly}\\
&\quad \text{ independent }\  (k_1,\ldots,k_n),(\ell_1,\ldots,\ell_n)\in\Z^n,\ \max_{i=1, \ldots, n} |k_i|\le K,\ \max_{i=1, \ldots, n} |\ell_i| \le L\Big\}.
\end {split}
\end{equation}

For the cardinality $\# \cC_{\brho, E}(p,K,L)$, we have:

\begin{theorem}
	\label{thm:EC-C}
	Let $\brho = (\varrho_1,\ldots,\varrho_n) \in \Q(X)^n$ be a vector of non-zero rational functions such that the points $(\varrho_1(X), \cdot), \ldots, (\varrho_n(X), \cdot)$ in $E(\overline{\Q(X)})$ are linearly independent over $\mathrm{End}(E)$.
Then, there exists an effectively computable constant  $c_1$ depending only on $\brho, E$ 	such that  for arbitrary integers $K, L \ge 1$, 
and any  prime $p > \exp(c_1 K^2L^2)$, for the set~\eqref{eq:Set C}	we have 
	$$
	\#\mathcal{C}_{\brho, E}(p,K,L)  \le \# \mathcal S_3,
	$$
	and the elements of $\mathcal{C}_{\brho, E}(p,K,L)$ come from the reduction modulo $p$ of elements of $\mathcal S_3$.
\end{theorem}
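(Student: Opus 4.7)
The plan is to adapt the strategy of Theorem~\ref{thm:Fp-A} to the elliptic setting, with the Viada--Galateau finiteness result (Lemma~\ref{lem:Gala}) playing the role of Maurin's theorem. For each $\alpha \in \mathcal{C}_{\brho,E}(p,K,L)$ I would pick a pair of $\Z$-linearly independent integer vectors $\vec k = (k_1, \ldots, k_n)$ and $\vec \ell = (\ell_1, \ldots, \ell_n)$ with $\|\vec k\|_\infty \le K$ and $\|\vec \ell\|_\infty \le L$ witnessing the two independent relations; there are at most $(2K+1)^n (2L+1)^n$ such pairs. Let $V \subset E^n$ be the curve parametrised by $X \mapsto ((\varrho_1(X), \cdot), \ldots, (\varrho_n(X), \cdot))$; by the assumed $\End(E)$-linear independence of the $(\varrho_i(X), \cdot)$, the curve $V$ is not contained in any proper algebraic subgroup of $E^n$. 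For each pair $(\vec k, \vec \ell)$, let $H_{\vec k, \vec \ell} \subset E^n$ be the codimension-$2$ algebraic subgroup cut out by $\sum k_i P_i = O$ and $\sum \ell_i P_i = O$. In characteristic zero, the union of the intersections $V \cap H_{\vec k, \vec \ell}$ over all such pairs, projected back to the $X$-line, is exactly the set $\mathcal{S}_3$, which is finite by Lemma~\ref{lem:Gala}.

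For each pair $(\vec k, \vec \ell)$ I would encode the conditions $\sum k_i P_i(X) = O$ and $\sum \ell_i P_i(X) = O$ by polynomials $F_{\vec k}(X), F_{\vec \ell}(X) \in \Z[X]$ built from the division polynomials $\psi_{k_i}$ of $E$ together with the elliptic addition formulas, composed with the $\varrho_i$. Using the standard bounds $\deg \psi_k \ll k^2$ and $\h(\psi_k) \ll k^2$, one obtains $\deg F_{\vec k}, \h(F_{\vec k}) \ll_{\brho, E} K^2$ and $\deg F_{\vec \ell}, \h(F_{\vec \ell}) \ll_{\brho, E} L^2$. The $X$-coordinates of the points in $V \cap H_{\vec k, \vec \ell}$ over $\overline{\Q}$ are then the common roots of $F_{\vec k}$ and $F_{\vec \ell}$ (after removing the fixed finite set of common poles of the $\varrho_i$). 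Forming the resultant $R_{\vec k, \vec \ell} = \Res_X(F_{\vec k}, F_{\vec \ell})$ together with the discriminants of $F_{\vec k}$ and $F_{\vec \ell}$ and the leading coefficients, one obtains a non-zero integer of logarithmic height $O(K^2 L^2)$ whose non-vanishing modulo $p$ guarantees that reduction modulo $p$ sends the common roots of $(F_{\vec k}, F_{\vec \ell})$ in $\overline{\Q}$ bijectively onto those in $\overline{\F}_p$.

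Choosing $c_1$ large enough (depending only on $\brho$ and $E$) to dominate the logarithmic heights of all the $R_{\vec k, \vec \ell}$ and of the relevant leading coefficients uniformly across the $\ll (KL)^n$ admissible pairs, for every prime $p > \exp(c_1 K^2 L^2)$ none of these integers vanishes modulo $p$. Consequently, every $\alpha \in \mathcal{C}_{\brho,E}(p,K,L)$ is the reduction modulo $p$ of a unique $\tilde\alpha \in \overline{\Q}$ satisfying the \emph{same} two independent relations, so $\tilde\alpha \in \mathcal{S}_3$, and distinct $\alpha$ lift to distinct $\tilde\alpha$; this yields $\#\mathcal{C}_{\brho,E}(p,K,L) \le \#\mathcal{S}_3$.

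The principal obstacle will be establishing the uniform degree and height bound $\ll K^2 L^2$ for $R_{\vec k, \vec \ell}$, which reflects the genuinely quadratic (rather than linear, as in the multiplicative setting of Theorem~\ref{thm:Fp-A}) growth of both degree and height under multiplication-by-$k$ on an elliptic curve; this is precisely the source of the $K^2 L^2$ exponent in the hypothesis $p > \exp(c_1 K^2 L^2)$. A secondary subtlety is ensuring that the lifted $\tilde\alpha$ inherits both independent relations simultaneously, not merely one of them together with some unrelated second relation; this is achieved by keeping $(F_{\vec k}, F_{\vec \ell})$ coupled through their resultant throughout, rather than treating each polynomial in isolation.
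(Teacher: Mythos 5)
Your overall strategy coincides with the paper's: reduce the mod-$p$ question to a characteristic-zero finiteness statement via summation/division polynomials, bound degrees and heights by $O(K^2)$ and $O(L^2)$, form a resultant of size $\exp(O(K^2L^2))$, and deduce that for $p$ beyond that size no new common roots appear in $\ov\F_p$. The exponent $K^2L^2$ and the use of the Viada--Galateau lemma for the finiteness of $\cS_3$ are also exactly as in the paper.

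However, there is a genuine gap in the resultant step. You assert that $R_{\vec k,\vec\ell}=\Res_X(F_{\vec k},F_{\vec\ell})$ (together with discriminants and leading coefficients) is a \emph{non-zero} integer. But whenever the pair $(\vec k,\vec\ell)$ actually contributes points to $\cS_3$ --- which is the interesting case --- the polynomials $F_{\vec k}$ and $F_{\vec\ell}$ share those roots in $\ov\Q$, so $\Res_X(F_{\vec k},F_{\vec\ell})=0$. Likewise the discriminants you invoke may vanish if $F_{\vec k}$ or $F_{\vec\ell}$ has a repeated root, which is not excluded by the hypotheses. The paper resolves both issues by working not with the raw polynomials but with a modified polynomial $\tilde U_{\pmb\ell}$: first pass to the squarefree part $\overline U_{\pmb\ell}=U_{\pmb\ell}/\gcd(U_{\pmb\ell},U_{\pmb\ell}')$, then strip out the factor shared with the auxiliary polynomial $W_{\cS_3}$ (the product of minimal polynomials of the elements of $\cS_3$), so that $\gcd(\tilde U_{\pmb\ell},W_{\cS_3})=1$. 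The system $U_{\pmb k}=\tilde U_{\pmb\ell}=0$ then has no solution over $\C$, hence $\Res(U_{\pmb k},\tilde U_{\pmb\ell})\neq 0$, and the height argument goes through. Your ``bijective reduction'' heuristic can be made rigorous, but only after extracting $g=\gcd(F_{\vec k},F_{\vec\ell})$ and replacing the resultant by $\Res(F_{\vec k}/g,F_{\vec\ell}/g)$ together with the discriminant of the squarefree part of $g$; as written, the key quantity is identically zero. A secondary omission is the degenerate case where $U_{\pmb k}$ reduces to a non-zero constant, which the paper disposes of with a separate height bound of order $\exp(O(K^2))$ before the resultant argument kicks in.
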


We have the following straightforward consequence about the set: 
\begin{equation}
\label{eq:Set E}
\begin{split}
& \cE_{\bphi, \brho, E}(p,K,L) = \big\{ \alpha  \in \ov \F_p:~ (\varphi_1(\alpha),\cdot), \ldots,(\varphi_m(\alpha), \cdot) \text{ are $K$-linearly}\\
&\qquad\qquad \text{dependent and $(\varrho_1(\alpha),\cdot), \ldots,(\varrho_n(\alpha),\cdot)$ are $L$-linearly dependent}\big\}.
\end {split}
\end{equation}

\begin{corollary}
\label{cor:EC-lidep}
Let $\bphi = (\varphi_1,\ldots,\varphi_m)$ and $\brho = (\varrho_1,\ldots,\varrho_n)$
 whose components are all non-zero rational functions in $\Q(X)$ such that the points 
$(\varphi_1(X), \cdot), \ldots, (\varphi_m(X), \cdot)$, $(\varrho_1(X), \cdot), \ldots, (\varrho_n(X), \cdot)$ in $E(\overline{\Q(X)})$ are linearly independent over $\mathrm{End}(E)$. Then, there exist an effectively computable constant  $c_1$  and a constant $c_2$ 
	both depending only on $\bphi, \brho, E$ 	such that  for arbitrary integers $K, L \ge 1$, 
and any  prime $p > \exp(c_1 K^2L^2)$, for the set~\eqref{eq:Set D}
we have 
$$
\#\cE_{\bphi, \brho,E}(p,K,L)  \le c_2.
$$
\end{corollary}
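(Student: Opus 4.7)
\medskip
\noindent\textbf{Proof proposal for Corollary~\ref{cor:EC-lidep}.} The plan is to reduce the statement to a direct application of Theorem~\ref{thm:EC-C} with respect to the concatenated vector
\[
\brho' = (\varphi_1, \ldots, \varphi_m, \varrho_1, \ldots, \varrho_n) \in \Q(X)^{m+n}.
\]
First I would observe that the hypothesis of Corollary~\ref{cor:EC-lidep}---namely, the $\mathrm{End}(E)$-linear independence of the points $(\varphi_i(X), \cdot)$ and $(\varrho_j(X), \cdot)$ in $E(\overline{\Q(X)})$---is precisely the hypothesis required by Theorem~\ref{thm:EC-C} when applied to $\brho'$.

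Next, I would establish the set inclusion $\cE_{\bphi, \brho, E}(p, K, L) \subseteq \cC_{\brho', E}(p, K, L)$. Fix $\alpha$ in the left-hand side. By the definition~\eqref{eq:Set E}, there exist nonzero integer vectors $(k_1, \ldots, k_m)$ with $\max_i |k_i| \le K$ and $(\ell_1, \ldots, \ell_n)$ with $\max_j |\ell_j| \le L$ such that
\[
\sum_{i=1}^m k_i (\varphi_i(\alpha), \cdot) = O \mand \sum_{j=1}^n \ell_j (\varrho_j(\alpha), \cdot) = O.
\]
Embedding these as $(k_1, \ldots, k_m, 0, \ldots, 0)$ and $(0, \ldots, 0, \ell_1, \ldots, \ell_n)$ in $\Z^{m+n}$ yields two $\Z$-linear relations on the combined tuple $((\varphi_1(\alpha),\cdot), \ldots, (\varphi_m(\alpha),\cdot),(\varrho_1(\alpha),\cdot),\ldots,(\varrho_n(\alpha),\cdot))$; they are $\Z$-linearly independent since their supports are disjoint and neither vector is zero, and they respect the coefficient bounds $K$ and $L$, respectively. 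Hence $\alpha$ lies in $\cC_{\brho', E}(p, K, L)$ as defined in \eqref{eq:Set C}.

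Finally, applying Theorem~\ref{thm:EC-C} to $\brho'$ produces an effectively computable constant $c_1$, depending only on $\brho'$ and $E$ (equivalently, only on $\bphi, \brho, E$), such that for every prime $p > \exp(c_1 K^2 L^2)$ one has $\#\cC_{\brho', E}(p, K, L) \le \#\cS_3'$, where $\cS_3'$ is the finite set associated with $\brho'$ in the statement of Theorem~\ref{thm:EC-C}. Setting $c_2 := \#\cS_3'$---a constant depending only on $\bphi, \brho, E$, though not necessarily effectively computable, in line with the statement---yields the desired bound. There is no substantive obstacle: the argument is a bookkeeping reduction, whose only subtlety is the $\Z$-linear independence of the two embedded relations, which follows immediately from their disjoint supports.
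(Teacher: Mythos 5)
Your proposal is correct and matches the paper's intended (implicit) proof: the paper presents Corollary~\ref{cor:EC-lidep} as a ``straightforward consequence'' of Theorem~\ref{thm:EC-C} without spelling out the argument, but the analogous Corollary~\ref{cor:Fp-multdep} is proved by exactly this concatenation device, applying the two-relations theorem to the combined vector $(\varphi_1,\ldots,\varphi_m,\varrho_1,\ldots,\varrho_n)$. Your observation that the two embedded relations have disjoint supports, hence are $\Z$-linearly independent, is precisely the small point that makes the inclusion $\cE_{\bphi,\brho,E}(p,K,L)\subseteq\cC_{\brho',E}(p,K,L)$ go through.
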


Taking $m = n =1$ and  $K = L =\rf{c_3(\log p)^{1/4}}$ for some effectively computable constant $c_3$ depending only on 
$E, \varphi=\varphi_1$ and $\varrho = \varrho_1$ in Corollary~\ref{cor:EC-lidep}, we directly have: 

\begin{corollary}
\label{cor:EC-Ord} 
Let  $\varphi, \varrho \in \Q(X)$ be non-zero rational functions such that the two points
$(\varphi(X), \cdot),  (\varrho(X), \cdot)$ in $E(\overline{\Q(X)})$ are linearly independent over $\mathrm{End}(E)$. 
Then, there exist two effectively computable constants  $c_1, c_3$  and a constant $c_2$ 
all depending only on $\varphi, \varrho, E$ 
such that for any prime $p > c_1$, for all but $c_2$ elements $\alpha \in \ov \F_p$ we have 
$$
\max\{\ord_{E_p} (\varphi(\alpha)), \ord_{E_p} (\varrho(\alpha))\}  \ge c_3 (\log p)^{1/4}. 
$$
\end{corollary}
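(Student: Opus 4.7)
The plan is to derive the corollary directly from Corollary~\ref{cor:EC-lidep} by specialising to $m = n = 1$ and choosing $K = L$ as an appropriate power of $\log p$, as already indicated in the paragraph preceding the statement.

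First I would unpack the rank one case of the notion of $L$-linear dependence. For $m = n = 1$, a single point $(\varphi(\alpha), \cdot)$ on $E_p$ is $K$-linearly dependent precisely when there is a non-zero integer $k$ with $|k| \le K$ and $k(\varphi(\alpha), \cdot) = O$, i.e.\ precisely when $\ord_{E_p}(\varphi(\alpha)) \le K$; likewise for $(\varrho(\alpha), \cdot)$ with $L$. Hence the set $\cE_{\bphi, \brho, E}(p, K, L)$ of \eqref{eq:Set E}, with $\bphi = (\varphi)$ and $\brho = (\varrho)$, is exactly
\[
\bigl\{\alpha \in \ov{\F}_p : \ord_{E_p}(\varphi(\alpha)) \le K \mand \ord_{E_p}(\varrho(\alpha)) \le L\bigr\}.
\]

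Next I would invoke Corollary~\ref{cor:EC-lidep} with these one-component vectors: the $\End(E)$-linear independence of $(\varphi(X), \cdot)$ and $(\varrho(X), \cdot)$ in $E(\overline{\Q(X)})$ required there is exactly what is assumed here. Let $c_1'$ (effective) and $c_2'$ be the constants supplied. Fix $c_3 > 0$ small enough that $4 c_1' c_3^4 \le 1$, which is an effective choice depending only on $\varphi, \varrho, E$, and set $K = L = \rf{c_3(\log p)^{1/4}}$. For every prime $p$ above a suitable threshold $c_1$ (depending on $c_3$), the estimate $K = L \le c_3(\log p)^{1/4} + 1 \le \sqrt{2}\, c_3 (\log p)^{1/4}$ yields $K^2 L^2 \le 4 c_3^4 \log p \le (\log p)/c_1'$, so the hypothesis $p > \exp(c_1' K^2 L^2)$ of Corollary~\ref{cor:EC-lidep} is met. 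That corollary then produces $\#\cE_{\bphi, \brho, E}(p, K, L) \le c_2'$, and I would take $c_2 = c_2'$.

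Finally, any $\alpha \in \ov{\F}_p$ outside this exceptional set of size at most $c_2$ must satisfy $\ord_{E_p}(\varphi(\alpha)) > K$ or $\ord_{E_p}(\varrho(\alpha)) > L = K$, which gives the desired bound $\max\{\ord_{E_p}(\varphi(\alpha)), \ord_{E_p}(\varrho(\alpha))\} \ge K \ge c_3 (\log p)^{1/4}$. I do not anticipate any substantive obstacle: the argument is purely a rephrasing of the single-function case and a small constant calibration. The one subtlety worth flagging is that, because the constant $c_2'$ from Corollary~\ref{cor:EC-lidep} is not asserted to be effective, the resulting $c_2$ is likewise not effective, in accordance with the statement; on the other hand, $c_1$ and $c_3$ are effectively computable.
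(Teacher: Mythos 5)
Your proposal is correct and reproduces the paper's intended argument, which is stated only as the one-line remark preceding the corollary (``Taking $m = n = 1$ and $K = L = \rf{c_3(\log p)^{1/4}}$ \ldots\ in Corollary~\ref{cor:EC-lidep}, we directly have''). You have filled in exactly the right details: the rank-one interpretation of $L$-linear dependence as a bound on the order, the calibration of $c_3$ against the constant $c_1'$ from Corollary~\ref{cor:EC-lidep}, and the correct observation that $c_2$ inherits the ineffectivity of the constant in that corollary.
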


In the elliptic case we prove similar results as in Theorems~\ref{thm:Ord2}, ~\ref{thm:Ord-A} and~\ref{thm:Ord-B}. 

\begin{theorem}
\label{thm:EC-Ord2} 
Let $\brho = (\varrho_1,\ldots,\varrho_n) \in \Q(X)^n$ be  defined as in Theorem~\ref{thm:EC-C}. 
Then, there exist an effectively computable constant  $c_1$  and a constant $c_2$ 
	both depending only on $\brho, E$ 	
such that  as $N \to \infty$, for all but $c_1 N(\log N)^{-2}$  primes $p \le N$ and for all but at most 
$c_2$ elements $\alpha \in \ov{\F}_p$, at least $n-1$ points of $(\varrho_1(\alpha),\cdot),\ldots,(\varrho_n(\alpha),\cdot)$ 
are of order at least $(N/\log N)^{1/(2n+4)}$. 
\end{theorem}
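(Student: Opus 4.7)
The plan is to mirror the strategy of Theorem~\ref{thm:Ord2}, with the multiplicative setting replaced by the elliptic one: from a large exceptional set, extract two independent small-coefficient linear relations on the points $(\varrho_i(\alpha),\cdot)$, forcing $\alpha \in \cC_{\brho,E}(p,M,M)$ for a suitable $M$, and then bound this set via an elliptic analogue of the trade-off Theorem~\ref{thm:Fp-multdep A}. A counting argument on the primes dividing the auxiliary integer $T$ then yields the claimed density.

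First, I would establish (by the same specialisation method used to prove Theorems~\ref{thm:EC-C} and~\ref{thm:EC-multdep B}) an elliptic counterpart of Theorem~\ref{thm:Fp-multdep A}: under the hypothesis of Theorem~\ref{thm:EC-C} on $\brho$, there exist two effectively computable constants $c_1',c_2'$ and a constant $c_3'$, all depending only on $\brho$ and $E$, such that for any integers $K,L\ge 1$ there is a positive integer $T$ with
$$
\log T \le c_1'(KL)^{n+2},
$$
satisfying $\#\cC_{\brho,E}(p,K,L) \le v_p(T) + c_3'$ for every prime $p > c_2'$. The integer $T$ is produced by controlling the reduction modulo $p$ of the finitely many intersection points between the parametrised curve $(\varrho_1(X),\cdot),\ldots,(\varrho_n(X),\cdot)$ in $E^n$ and the codimension-$2$ algebraic subgroups cut out by two $\Z$-linearly independent integer vectors of $\ell^\infty$-norm at most $K$ and $L$ respectively; the degree and height estimates for the associated division polynomials yield the bound on $\log T$, analogously to the derivation of Theorem~\ref{thm:Fp-multdep A} in the toric case.

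Second, I would set $M = (N/\log N)^{1/(2n+4)}$. If $\alpha \in \ov\F_p$ (not a pole of any $\varrho_i$) has two distinct indices $i \ne j$ with $\ord_{E_p}(\varrho_i(\alpha)) \le M$ and $\ord_{E_p}(\varrho_j(\alpha)) \le M$, then taking $(k_1,\ldots,k_n)$ and $(\ell_1,\ldots,\ell_n)$ to be the vectors whose only non-zero entries are $\ord_{E_p}(\varrho_i(\alpha))$ in the $i$-th slot and $\ord_{E_p}(\varrho_j(\alpha))$ in the $j$-th slot respectively yields two $\Z$-linearly independent relations of $\ell^\infty$-norm at most $M$, so $\alpha \in \cC_{\brho,E}(p,M,M)$. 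Applying Step~1 with $K=L=M$ gives $\log T \le c_1' M^{2(n+2)} \le c_1'' N/\log N$ and $\#\cC_{\brho,E}(p,M,M) \le v_p(T) + c_3'$ for every prime $p > c_2'$. Then I would count the exceptional primes $p \le N$: the primes $p \le N/(\log N)^2$ contribute only $O(N/(\log N)^3)$ by the Prime Number Theorem; among primes $p \in (N/(\log N)^2, N]$, those with $p \nmid T$ satisfy $\#\cC_{\brho,E}(p,M,M) \le c_3'$, while those with $p \mid T$ each contribute at least $\log(N/(\log N)^2) \gg \log N$ to $\log T$, so their number is $\ll \log T/\log N \ll N/(\log N)^2$. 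Summing, the total number of bad primes is $O(N/(\log N)^2)$, as required, taking $c_2 := c_3'$.

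The main obstacle is Step~1: establishing the uniform trade-off bound $\log T \le c_1'(KL)^{n+2}$. This requires careful bookkeeping of how the heights and degrees grow when intersecting the curve in $E^n$ with varying families of codimension-$2$ subgroups, which is technically heavier than its toric counterpart because elliptic division polynomials have degree quadratic in the torsion order; additional care is needed for the endomorphism-ring action when $E$ has complex multiplication. Nonetheless, the overall structure follows the specialisation pattern already established in the proofs of Theorems~\ref{thm:Fp-multdep A}, \ref{thm:EC-C}, and~\ref{thm:EC-multdep B}, so the argument should go through with essentially the same tools. As in the statement of Theorem~\ref{thm:EC-C}, effectivity of $c_3'$ (and hence of $c_2$) is only expected in the case $n=1$.
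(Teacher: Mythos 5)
There is a genuine gap in Step~1. You propose to first establish an elliptic counterpart of Theorem~\ref{thm:Fp-multdep A} for the set $\cC_{\brho,E}(p,K,L)$, namely a uniform trade-off bound $\#\cC_{\brho,E}(p,K,L) \le v_p(T)+c_3'$ valid for \emph{every} prime $p > c_2'$, with $c_2'$ independent of $K$ and $L$. But the authors explicitly state, in the remark closing Section~2.3, that they do \emph{not} obtain such a result: the obstruction is that for the $v_p(T)$-type argument one must know that the polynomials $U_{\pmb{k}}$ do not reduce to constants modulo $p$ for all $p$ larger than a bound independent of $K,L$. In the toric setting this is forced by the hypothesis that the $\varphi_i$ are multiplicatively independent modulo constants, and in the mixed setting of Theorem~\ref{thm:EC-multdep B} the multiplicative factor supplies the non-constancy; for $\cC_{\brho,E}$ there is no analogous elliptic hypothesis, and the constant case is only controlled by requiring $p > \exp(cK^2)$ (as in the proof of Theorem~\ref{thm:EC-C}), which grows with $K$. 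So your Step~1 is precisely the missing ingredient the paper flags as unavailable, and the closing sentence ``the argument should go through with essentially the same tools'' is too optimistic.

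The paper's actual proof of this theorem avoids the need for a uniform trade-off entirely. It mirrors the proof of Theorem~\ref{thm:Ord2}: set $T=\prod_{\pmb k,\pmb\ell}|R_{\pmb k,\pmb\ell}|$ (with $R_{\pmb k,\pmb\ell}=\Res(U_{\pmb k},\tilde U_{\pmb\ell})$, so $\log T\ll_{\brho,E}(KL)^{n+2}$ via \eqref{eq:logRkl}), and separately set $J=\prod_{\pmb k}H(U_{\pmb k})$ over those $\pmb k$ with $U_{\pmb k}$ constant (so $\log J\ll_{\brho,E}K^{n+2}$ via \eqref{eq:hUk}), and then simply \emph{exclude} the primes dividing $T$ or $J$. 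For the excluded primes one counts: with $K=L=\lceil(N/\log N)^{1/(2n+4)}\rceil$ one gets $\log T\ll N/\log N$, so $T$ has $\ll N/(\log N)^2$ distinct prime divisors, and $J$ contributes even fewer. For the remaining primes the system \eqref{eq:Ukl} has no solution over $\ov\F_p$ (no uniform $v_p(T)$ statement needed, only non-divisibility for the specific $K,L$ at hand), hence $\#\cC_{\brho,E}(p,K,K)\le\#\cS_3$, and the conclusion about $n-1$ points having large order follows as you describe. Your Step~2 is close to this in spirit but leans on the unproven Step~1; also note that you exclude only primes dividing $T$ and never address the constant-$U_{\pmb k}$ case, which in the paper's argument is exactly what the auxiliary integer $J$ handles.
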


\begin{theorem}   
\label{thm:EC-Ord3} 
Let $\bphi = (\varphi_1,\ldots,\varphi_m)$ and $\brho = (\varrho_1,\ldots,\varrho_n)$ be  defined as in Corollary~\ref{cor:EC-lidep}. 
Then, there exist an effectively computable constant  $c_1$  and a constant $c_2$ both depending only on $\bphi, \brho, E$ 
such that  as $N \to \infty$, for all but $c_1 N(\log N)^{-2}$  primes $p \le N$ and for all but at most 
$c_2$ elements $\alpha \in \ov{\F}_p$, at least one 
of the two finitely generated groups 
$$
\langle (\varphi_1(\alpha), \cdot), \ldots,(\varphi_m(\alpha),\cdot) \rangle \quad \mand \quad 
\langle (\varrho_1(\alpha),\cdot), \ldots,(\varrho_n(\alpha),\cdot) \rangle
$$
is of order at least $N^{mn/(2mn+2m+2n)}(\log N)^{-1/2}$.  
\end{theorem}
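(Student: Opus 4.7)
The approach follows the same pattern by which Theorems~\ref{thm:Ord-A} and~\ref{thm:Ord-B} are deduced from their trade-off counterparts Theorem~\ref{thm:Fp-multdep D} and Theorem~\ref{thm:EC-multdep B}, now transplanted to the purely elliptic setting. The key additional ingredient is a trade-off strengthening of Corollary~\ref{cor:EC-lidep}: for any integers $K, L \ge 1$ there should exist a positive integer $T$ with
$$
\log T \ll_{\bphi, \brho, E} K^{m+2} L^{n+2}
$$
such that for every sufficiently large prime $p$ for which $E_p$ is still an elliptic curve,
$$
\#\cE_{\bphi, \brho, E}(p, K, L) \le v_p(T) + O_{\bphi, \brho, E}(1).
$$
The exponents $m+2, n+2$ are the natural elliptic analogues of the $m+1, n+1$ in Theorem~\ref{thm:Fp-multdep D} and $m+1, n+2$ in Theorem~\ref{thm:EC-multdep B}: each elliptic factor of the ambient semiabelian variety $E^m \times E^n$ raises the exponent by one more than a toric factor would contribute.

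Granted this trade-off, the deduction is routine. Set
$$
M = (N/\log N)^{mn/(2mn+2m+2n)}, \qquad K = \lfloor M^{1/m} \rfloor, \qquad L = \lfloor M^{1/n} \rfloor,
$$
and observe that $M \ge N^{mn/(2mn+2m+2n)} (\log N)^{-1/2}$ since $mn/(2mn+2m+2n) \le 1/2$. If, for some $\alpha \in \ov{\F}_p$ lying away from the finitely many poles and zeros of the $\varphi_i$ and $\varrho_j$, both subgroups
$\langle (\varphi_1(\alpha),\cdot), \ldots, (\varphi_m(\alpha),\cdot)\rangle$ and $\langle (\varrho_1(\alpha),\cdot), \ldots, (\varrho_n(\alpha),\cdot)\rangle$
have order strictly less than $M$, a pigeonhole argument on the boxes $\{-K,\ldots,K\}^m$ and $\{-L,\ldots,L\}^n$ produces non-trivial integer relations with coefficients bounded by $2K$ and $2L$ respectively, placing $\alpha \in \cE_{\bphi, \brho, E}(p, 2K, 2L)$. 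With these parameters,
$$
\log T \ll K^{m+2} L^{n+2} \ll M^{(2mn+2m+2n)/(mn)} = N/\log N.
$$
Splitting the primes $p \le N$ into those $\le N/\log N$ (bounded by $\pi(N/\log N) = O(N/(\log N)^2)$) and those in $(N/\log N, N]$ (each such $p$ with $v_p(T) \ge 1$ contributes at least $\log(N/\log N) \gg \log N$ to $\log T = \sum_q v_q(T) \log q$, so their number is $O(\log T / \log N) = O(N/(\log N)^2)$) bounds the exceptional primes by $O(N/(\log N)^2)$, yielding the claimed density of bad primes.

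The principal obstacle lies in establishing the trade-off of step one. It will require phrasing the curve--subgroup intersection theorem for $\Gm^m \times E^n$ advertised in the abstract on the split abelian variety $E^m \times E^n$, combined with an effective Viada--Galateau-type unlikely intersection bound on powers of elliptic curves (in the spirit of Lemma~\ref{lem:Gala}), and then transporting the resulting height estimates to characteristic $p$ by specialisation. Once these ingredients are in place, the construction of the integer $T$ itself mirrors those appearing in the proofs of Theorems~\ref{thm:Fp-multdep A},~\ref{thm:Fp-multdep D}, and~\ref{thm:EC-multdep B}.
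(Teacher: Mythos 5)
Your deduction from the hypothetical trade-off bound is logically sound and your parameter choices are essentially equivalent to the paper's (both force $K^m,L^n\gg N^{mn/(2mn+2m+2n)}(\log N)^{-1/2}$ and $\log T\ll N/\log N$), but your overall architecture rests on an ingredient the paper explicitly disclaims. At the end of Section~2.3 the authors state that the analogue of Theorems~\ref{thm:Fp-multdep A},~\ref{thm:Fp-multdep D} and~\ref{thm:EC-multdep B} for the purely elliptic sets is \emph{not} obtained, precisely because those trade-off results require showing that the relevant polynomials (here the numerators $U_{\pmb{k}}$ of the composed summation polynomials) do not vanish identically modulo $p$ for all primes $p$ larger than a bound independent of $K$ and $L$. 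In the $\Gm$-case this follows from the hypothesis of multiplicative independence \emph{modulo constants}; in the purely elliptic case no such uniform non-vanishing criterion is available. So the "key additional ingredient" you identify --- a trade-off inequality $\#\cE_{\bphi,\brho,E}(p,K,L)\le v_p(T)+O(1)$ valid for \emph{all} sufficiently large $p$ --- is exactly the statement the authors say they cannot prove, and you yourself flag it as the "principal obstacle" without resolving it.

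You have also misread the dependency structure of the paper: Theorems~\ref{thm:Ord-A},~\ref{thm:Ord-B},~\ref{thm:EC-Ord2} and~\ref{thm:EC-Ord3} are \emph{not} deduced from the trade-off Theorems~\ref{thm:Fp-multdep D} and~\ref{thm:EC-multdep B}. They all follow the direct prime-counting scheme of the proof of Theorem~\ref{thm:Ord2}: one forms the integers
$$
T \;=\; \prod_{\pmb{k}}\ \prod_{\pmb{\ell}}\ \bigl|R_{\pmb{k},\pmb{\ell}}\bigr|
\qquad\text{and}\qquad
J \;=\; \prod_{\pmb{k}:\ U_{\pmb{k}}\ \text{constant}} \bigl|U_{\pmb{k}}\bigr|,
$$
bounds $\log T$ and $\log J$ via the height estimates (in the present case $\log|R_{\pmb{k},\pmb{\ell}}|\ll K^2L^2$ from~\eqref{eq:logRkl}, giving $\log T\ll K^{m+2}L^{n+2}$), and observes that a prime $p$ with $p\nmid TJ$ forces the system $U_{\pmb{k}}=\tilde U_{\pmb{\ell}}=0$ to have no solutions in $\ov\F_p$ for any admissible $\pmb{k},\pmb{\ell}$, so that $\#\cE_{\bphi,\brho,E}(p,K,L)\le\#\cS_3+O(1)$. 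This weaker statement (valid only for $p\nmid TJ$, rather than a $v_p(T)$ bound for all $p$) is exactly what the asymptotic theorem needs, and it requires no uniform non-vanishing lemma: the primes for which the polynomials might degenerate modulo $p$ are automatically among the $O(\log T/\log\log T)+O(\log J/\log\log J)=O(N/(\log N)^2)$ excluded primes dividing $TJ$. So the obstacle you identify does not exist for this theorem; it only blocks the stronger, uniform trade-off statement, which is not needed here and which the paper accordingly omits.
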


We end this section with a remark on obtaining an analogue result as in Theorems~\ref{thm:Fp-multdep A},~\ref{thm:Fp-multdep D} and~\ref{thm:EC-multdep B} for the set $\mathcal{C}_{\brho, E}(p,K,L)$. We point out that, in order to prove Theorems~\ref{thm:Fp-multdep A},~\ref{thm:Fp-multdep D} and~\ref{thm:EC-multdep B} we need to show that certain polynomials are not constant modulo $p$, for primes $p$ large enough \emph{independently of $K$ and $L$} (see Section~\ref{sect:proofs} for more details). While this is easily done in the $\G_m$ case, in the setting of Theorem~\ref{thm:EC-C} it is not clear how to get the same uniform bound on $p$. For this reason we do not obtain the analogue of Theorems~\ref{thm:Fp-multdep A},~\ref{thm:Fp-multdep D} and~\ref{thm:EC-multdep B} for the set $\mathcal{C}_{\brho, E}(p,K,L)$.

\section{Preliminaries}

\subsection{Heights of polynomials and rational functions} 

For any non-zero polynomial $f \in \C[X]$, we define the height of $f$, denoted by $H(f)$, 
to be the maximum of the absolute values of its coefficients, and we also define 
$$
\h(f) = \max\{0, \log H(f)\}. 
$$  
If $f(X) = a_d \prod_{i=1}^{d}(X-\alpha_i)$ with $a_d \ne 0$, then the Mahler measure of $f$ is defined to be 
$$
M(f) = |a_d| \prod_{i=1}^{d} \max\{|\alpha_i|, 1\}. 
$$
It is well-known that (see, for instance, \cite[Equation (3.12)]{Wal})
\begin{equation}  \label{eq:Mahler}
2^{-d}H(f) \le M(f) \le \sqrt{d+1}H(f). 
\end{equation}

The following bound on the height of a product of several polynomials is well-known
and holds  in much broader
generality; see, for example,~\cite[Lemma~1.2 (1.b)]{KPS}. 

\begin{lemma}
\label{lem:HeightPoly}
Let $f_1, \ldots, f_n \in \C[X]$ be non-zero polynomials.  Then   
$$
 \h\(\prod_{i=1}^{n} f_{i}\) \le \sum_{i=1}^{n} \( \h(f_i) + \deg f_{i}\). 
$$
\end{lemma}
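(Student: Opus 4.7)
The plan is to prove the bound by a simple induction on $n$, with the core of the argument being a direct coefficient estimate for the product of two polynomials. The case $n=1$ is trivial since $\h(f_1)\le\h(f_1)+\deg f_1$.

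For the inductive step, I would first establish the two-factor version: for any non-zero $f,g\in\C[X]$,
\[
H(fg)\le\bigl(\min(\deg f,\deg g)+1\bigr)H(f)H(g).
\]
This follows directly from the convolution formula; writing $f=\sum_{i} a_i X^i$ and $g=\sum_{j} b_j X^j$, the coefficient of $X^k$ in $fg$ equals $\sum_{i+j=k} a_i b_j$, a sum of at most $\min(\deg f,\deg g)+1$ terms, each of absolute value at most $H(f)H(g)$. Taking logarithms and invoking the elementary inequality $\log(1+t)\le t$ for $t\ge 0$ then yields
\[
\h(fg)\le \h(f)+\h(g)+\min(\deg f,\deg g).
\]
Applying this with $f=f_1\cdots f_{n-1}$ and $g=f_n$, and using $\min(\deg f,\deg g)\le \deg f_n$, I obtain
\[
\h(f_1\cdots f_n)\le \h(f_1\cdots f_{n-1})+\h(f_n)+\deg f_n,
\]
so the inductive hypothesis applied to $f_1,\dots,f_{n-1}$ closes the argument.

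Since the argument is elementary and self-contained, there is no genuine obstacle. It is worth remarking, however, that the alternative route through the Mahler measure (using the multiplicativity $M(fg)=M(f)M(g)$ together with the two-sided comparison \eqref{eq:Mahler} between $M$ and $H$) does not cleanly give the stated form: one picks up a factor of $\log 2$ in front of $\sum\deg f_i$ together with residual terms $\frac{1}{2}\log(\deg f_i+1)$, which, for small degrees, do not combine into $\sum \deg f_i$. The direct coefficient estimate above sidesteps this issue and yields the sharper constant appearing in the lemma.
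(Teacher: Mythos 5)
Your proof is correct. The paper does not actually prove Lemma~\ref{lem:HeightPoly}; it simply cites \cite[Lemma~1.2 (1.b)]{KPS}, so there is no internal argument to compare against. Your direct coefficient-convolution estimate together with $\log(1+t)\le t$ is an elementary, self-contained route to exactly the stated bound, and the induction closes cleanly because the two-factor inequality $\h(fg)\le\h(f)+\h(g)+\min(\deg f,\deg g)$ is monotone under replacing $\min(\deg f,\deg g)$ by $\deg f_n$. One small point worth stating explicitly (though it causes no trouble): the step from $\log H$ to $\h=\max\{0,\log H\}$ uses both $\log H(f)\le\h(f)$ on the right-hand side and the non-negativity of $\h(f)+\h(g)+\min(\deg f,\deg g)$ to absorb the case $H(fg)<1$ on the left-hand side. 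Your closing remark about the Mahler-measure route is also accurate: the factor $2^{D}$ from \eqref{eq:Mahler} contributes $D\log 2$, and for degree-one factors the residual $\tfrac12\log(d_i+1)$ pushes the total past $d_i$, so that approach would give a slightly weaker constant than the one in the lemma.
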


Clearly, the notion of height naturally extends to multivariate polynomials, namely for non-zero polynomial $f\in \C[X_1,\dots, X_n]$ we let $H(f)$ be the maximum of the absolute values of its coefficients and $\h(f)=\max\{0,\log H(f)\}$.

Moreover, for a rational function $R=f/g$, where $f,g\in \Z[X_1,\dots, X_n]$ are coprime, we define  $\deg R = \max\{\deg f, \deg g\}$, 
$$
H(R)=\max\{H(f),H(g)\} \mand \h(R)=\max\{\h(f),\h(g)\}.
$$

We need the following estimate on the height of composition of rational functions, which is a special case of~\cite[Lemma 3.3]{DOSS}.

\begin{lemma}\label{lem:hight-composition}
  Let $R \in \Q[X_1,\ldots,X_n]$ and $f_1,\ldots, f_n \in \Q(X)$. Set $
  d=\max_{i=1,\ldots, n}\deg f_i$ and $ h=\max_{i=1,\ldots, n}\h(f_i)$. Then
\begin{align*}
\deg R(f_1,\ldots, f_n)& \le dn \deg R, \\
 \h(R(f_1,\ldots, f_n))&\leq \h(R) + h \deg R+  (3dn+1)\log(n+1) \deg R .
\end{align*}
\end{lemma}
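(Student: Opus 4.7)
The plan is to put $R(f_1, \ldots, f_n)$ over an explicit common denominator, bound the heights of the resulting numerator and denominator with Lemma~\ref{lem:HeightPoly}, and then pass to the reduced coprime representation via the multiplicativity of the Mahler measure together with the comparison~\eqref{eq:Mahler}.

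Write $R(X_1, \ldots, X_n) = \sum_{|\alpha| \le D} c_\alpha X^\alpha$ with $D = \deg R$ and $\alpha = (\alpha_1, \ldots, \alpha_n) \in \Z_{\ge 0}^n$, and write each $f_i = p_i/q_i$ with $p_i, q_i \in \Z[X]$ coprime, of degree $\le d$ and logarithmic height $\le h$. Taking $Q = \prod_{i=1}^n q_i^D$ as a common denominator yields
$$
R(f_1, \ldots, f_n) = N/Q, \qquad N = \sum_{|\alpha| \le D} c_\alpha \prod_{i=1}^n p_i^{\alpha_i} q_i^{D-\alpha_i}.
$$
Since $\deg N, \deg Q \le dnD$ and reducing to coprime form can only decrease degrees, the degree bound $\deg R(f_1, \ldots, f_n) \le dn \deg R$ follows at once. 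For the height bound, apply Lemma~\ref{lem:HeightPoly} to each product $\prod_i p_i^{\alpha_i} q_i^{D-\alpha_i}$, a product of $nD$ factors of degree $\le d$ and height $\le h$, to get $\h(\prod_i p_i^{\alpha_i} q_i^{D-\alpha_i}) \le nD(h+d)$; combining with the number $\binom{n+D}{n}$ of monomials and the trivial bound on the height of a sum then gives bounds of the form $\h(N) \le \h(R) + O(D \log (n+1)) + nD(h+d)$ and $\h(Q) \le nD(h+d)$. Writing $N = AG$ and $Q = BG$ with $G = \gcd(N,Q) \in \Z[X]$ primitive, the multiplicativity of the Mahler measure together with $M(G) \ge 1$ gives $M(A) \le M(N)$ and $M(B) \le M(Q)$, and~\eqref{eq:Mahler} then converts these into bounds on $\h(A)$ and $\h(B) = \h(R(f_1, \ldots, f_n))$.

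The main delicacy is matching the precise coefficient $(3dn+1)\log(n+1)$ in the error term and, more subtly, the linear-in-$\deg R$ (rather than $n \deg R$) dependence on $h$ in the statement: a naive execution of the scheme above introduces a spurious factor of $n$ in the coefficient of $h$, and removing it requires a more careful accounting of how the common factor $G$ absorbs the repeated $q_i$'s, equivalently a place-by-place local-height analysis exploiting the coprimality of $p_i$ and $q_i$ at each prime. This is the technical heart of the general height estimate~\cite[Lemma 3.3]{DOSS}, of which the present statement is a direct specialisation, and we expect it to be the main obstacle in turning the above plan into a complete proof.
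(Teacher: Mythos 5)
The paper does not prove this lemma; it simply records it as a specialisation of~\cite[Lemma~3.3]{DOSS}, so any direct argument is by construction a different route. Your common-denominator-plus-Mahler-measure scheme is the natural one, and the degree bound $dn\deg R$ together with an error term of order $dn\log(n+1)\deg R$ fall out exactly as you describe from Lemma~\ref{lem:HeightPoly}, a monomial count, and~\eqref{eq:Mahler}.

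The obstacle you flag at the end, however, is not a defect of your plan: the factor of $n$ in front of $h$ is genuine and cannot be removed, and the cancellation you hope for at $G=\gcd(N,Q)$ simply does not occur when the denominators $q_i$ are pairwise coprime. Concretely, take $n=2$, $R=X_1+X_2$ (so $\deg R=1$, $\h(R)=0$) and $f_i=1/q_i$ with $q_1=X-M$, $q_2=X-(M+1)$ for a large integer $M$, so that $d=1$ and $h=\log(M+1)$. Then
$$
R(f_1,f_2)=\frac{2X-(2M+1)}{X^2-(2M+1)X+M(M+1)}
$$
is already in lowest terms and has height $\log\big(M(M+1)\big)\approx 2h$, while the stated bound reads $h+7\log 3$; this fails as soon as $\log M>7\log 3$. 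Thus the correct coefficient of $h$ is $n\deg R$, precisely what your computation yields; the $h\deg R$ appearing in the statement is a transcription slip from~\cite[Lemma~3.3]{DOSS}, where the inner-height coefficient carries the number of inner coordinates (here $n$). This has no effect on the paper's downstream use of the lemma, since in every application (\eqref{eq:n1}, \eqref{eq:height-of-comp}, \eqref{eq:h(Theta)}) $n$ is a fixed parameter and the missing factor is absorbed into the implied constants. So you should finish your argument exactly as planned, with $nh\deg R$ in place of $h\deg R$, rather than search for a cancellation that is not there.
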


\subsection{The size and divisibility of resultants} 
We start with the following simple estimate on the absolute value of the resultant of two polynomials; see \cite[Theorem 6.23]{GG}. 
Its proof relies on applying Hadamard's inequality to the Sylvester matrix of $f$ and $g$. 

\begin{lemma}\label{lem:res}  
 Let $f,g \in \C[X]$ be non-zero polynomials. Then, their resultant $\Res(f,g)$ satisfies 
 $$
 |\Res(f,g)| \leq \left( \sqrt{\deg f+1}\, H(f) \right)^{\deg g} \left( \sqrt{\deg g+1}\, H(g) \right)^{\deg f}.
 $$
\end{lemma}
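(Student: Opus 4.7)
The plan is to follow the hint given in the statement and apply Hadamard's inequality to the Sylvester matrix of $f$ and $g$. Write $m=\deg f$ and $n=\deg g$, and expand $f(X)=\sum_{i=0}^{m}a_iX^i$ and $g(X)=\sum_{j=0}^{n}b_jX^j$, with $a_m,b_n\neq 0$. Recall that $\Res(f,g)$ is defined as the determinant of the $(m+n)\times(m+n)$ Sylvester matrix $S(f,g)$, whose first $n$ rows are cyclic shifts of the coefficient vector $(a_m,a_{m-1},\ldots,a_0,0,\ldots,0)$ and whose last $m$ rows are cyclic shifts of $(b_n,b_{n-1},\ldots,b_0,0,\ldots,0)$.

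The key step is Hadamard's inequality, which states that for any real or complex square matrix $A$ with rows $r_1,\ldots,r_N$, one has $|\det A|\le\prod_{k=1}^{N}\|r_k\|_2$. Applied to $S(f,g)$, each of the first $n$ rows consists of the $m+1$ coefficients of $f$ (possibly reordered and padded with zeros), so its Euclidean norm equals $\bigl(\sum_{i=0}^{m}|a_i|^2\bigr)^{1/2}\le\sqrt{m+1}\,H(f)$. Analogously, each of the last $m$ rows has Euclidean norm at most $\sqrt{n+1}\,H(g)$.

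Multiplying these bounds across all $m+n$ rows gives
$$
|\Res(f,g)|=|\det S(f,g)|\le\bigl(\sqrt{m+1}\,H(f)\bigr)^{n}\bigl(\sqrt{n+1}\,H(g)\bigr)^{m},
$$
which is exactly the claim. There is essentially no obstacle here; the only points to verify carefully are the precise shape of the Sylvester matrix (to confirm that every nonzero entry in the upper block is one of the $a_i$ and every nonzero entry in the lower block is one of the $b_j$, with no entry repeated across positions that would inflate a row's norm beyond the asserted $\sqrt{m+1}\,H(f)$ or $\sqrt{n+1}\,H(g)$) and the statement of Hadamard's inequality, both of which are standard and can simply be cited (e.g.\ from \cite{GG}).
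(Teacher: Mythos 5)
Your proof is correct and follows exactly the route the paper indicates: the paper cites \cite[Theorem 6.23]{GG} and notes that the bound follows from applying Hadamard's inequality to the Sylvester matrix, which is precisely what you carry out.
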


Now, given two non-zero polynomials $f,g \in \Z[X]$ which are not both constants, 
it is well-known that, if their reductions modulo a prime $p$ have a common factor, 
 then their resultant $\Res(f,g)$ is divisible by
$p$.  The following result of  G\'omez-P\'erez,
Guti\'errez, Ibeas and Sevilla~\cite{GGIS} refines this property for polynomials
with several common roots modulo $p$. 
We remark that in \cite{GGIS} the authors assume the two polynomials to be both non-zero modulo $p$, but in the proof they only need one of the two polynomials not to vanish modulo $p$.  

\begin{lemma}
 \label{lem:Res}
 Let $f,g\in \Z[X]$ be two non-zero polynomials whose reductions modulo $p$ do not both vanish  
   and have  $m$ common roots in $\ov \F_p$, counted
  with multiplicities.  Then, 
\[
m \le v_p ( \Res(f,g) ) .
\]
\end{lemma}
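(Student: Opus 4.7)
The plan is to combine the Sylvester--matrix description of the resultant with a Smith--normal--form estimate for the nullity of an integer matrix modulo $p$. Let $n=\deg f$, $n'=\deg g$, and let $S\in M_{n+n'}(\Z)$ be the Sylvester matrix of $f$ and $g$, so that $\det S=\Res(f,g)$. We may assume $\Res(f,g)\ne 0$, otherwise the claim is vacuous. Writing the Smith normal form $S=UDV$ with invariant factors $d_1\mid\cdots\mid d_{n+n'}$, the $\F_p$--rank of the reduction $\bar S$ equals the number of $d_i$ coprime to $p$; since every $d_i$ divisible by $p$ contributes at least $1$ to $\sum_i v_p(d_i)=v_p(\Res(f,g))$, we obtain
\[
v_p(\Res(f,g))\;\ge\;\dim_{\F_p}\ker\bar S.
\]
It therefore suffices to show $\dim_{\F_p}\ker\bar S\ge m$.

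Under the usual coefficient--vector identification, $\ker\bar S$ corresponds to the kernel of the $\F_p$--linear map
\[
\Phi:\F_p[X]_{<n'}\oplus\F_p[X]_{<n}\longrightarrow\F_p[X]_{<n+n'},\qquad (u,v)\longmapsto u\bar f+v\bar g,
\]
where $\bar f,\bar g$ denote the reductions mod $p$. If $\bar g=0$ (so $\bar f\ne 0$ by hypothesis, and one naturally takes $m=\deg\bar f$), then $\Phi(u,v)=u\bar f$ has kernel containing $\{0\}\oplus\F_p[X]_{<n}$, of dimension $n\ge m$; the case $\bar f=0$ is symmetric. Otherwise both $\bar f,\bar g$ are non-zero. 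Setting $\bar h=\gcd(\bar f,\bar g)$ of degree $m$, write $\bar f=\bar h\bar f_1$, $\bar g=\bar h\bar g_1$ with $\gcd(\bar f_1,\bar g_1)=1$. Cancelling $\bar h$ from $u\bar f+v\bar g=0$ and using coprimality of $\bar f_1,\bar g_1$, every element of $\ker\Phi$ has the form $(u,v)=(\bar g_1 w,-\bar f_1 w)$ for a unique $w\in\F_p[X]$, and the constraints $\deg u<n'$, $\deg v<n$ let $w$ range over a vector space of dimension $m+\min(n-\deg\bar f,\,n'-\deg\bar g)\ge m$.

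The only slightly delicate point is the degree bookkeeping when $\deg\bar f<\deg f$ or $\deg\bar g<\deg g$ (that is, when the leading coefficient of $f$ or $g$ is divisible by $p$), since the Sylvester matrix is formed with the nominal degrees $n,n'$ while $\bar h$ lives in a smaller polynomial ring. The computation above shows this discrepancy only inflates $\dim\ker\bar S$ by the non-negative quantity $\min(n-\deg\bar f,\,n'-\deg\bar g)$, so it works in our favour. Combining the two cases with the Smith--normal--form inequality yields $v_p(\Res(f,g))\ge m$, as required.
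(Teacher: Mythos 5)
The paper does not prove this lemma; it simply cites~\cite{GGIS}, remarking only that the hypothesis can be relaxed so that just one of the reductions $\bar f,\bar g$ need be non-zero. Your Smith-normal-form argument is correct, self-contained, and already incorporates that relaxed hypothesis. Both steps check out: the identity $v_p(\det S)=\sum_i v_p(d_i)$ for the invariant factors $d_i$ of the Sylvester matrix $S$, together with $\dim_{\F_p}\ker\bar S=\#\{i:p\mid d_i\}$, gives $v_p(\Res(f,g))\ge\dim_{\F_p}\ker\bar S$; and the identification of the (left) nullspace of $\bar S$ with Bezout pairs $(u,v)$ satisfying $u\bar f+v\bar g=0$ under the usual degree caps, followed by factoring through $\gcd(\bar f,\bar g)$ of degree $m$ and using coprimality of the cofactors, gives $\dim_{\F_p}\ker\bar S\ge m$, with the drop in degree under reduction only enlarging the kernel as you note. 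This is the standard linear-algebra proof of the statement, and I believe it is essentially the argument of~\cite{GGIS}. Two minor cosmetic points: when $\Res(f,g)=0$ the claim is trivially true rather than vacuous, since $v_p(0)=\infty$ by the paper's convention; and, strictly, it is the row nullspace of $\bar S$ that the Bezout pairs parametrise, though of course this has the same dimension as $\ker\bar S$.
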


\subsection{Division polynomials and their heights}  \label{sec:div-poly}
Let $E$ be an elliptic curve defined  as in~\eqref{eq:EC}. For any integer $n \ge 1$, let $\psi_n$ be the $n$-th division polynomial of $E$;
 see~\cite[Exercise 3.7]{Silv}  for their definition and properties. 
That is, 
\begin{equation*}  \label{eq:divpoly}
\begin{split}
& \psi_0 = 0, \qquad \psi_1 = 1, \qquad \psi_2 = 2Y, \\
& \psi_3 = 3X^4 +6aX^2 +12bX - a^2, \\
& \psi_4 = 4Y(X^6 +5aX^4 +20bX^3 - 5a^2 X^2 - 4abX - 8b^2 - a^3), \\
& \psi_{2m+1} = \psi_{m+2}\psi_m^3 - \psi_{m-1}\psi_{m+1}^3  \quad \textrm{for $m \ge  2$}, \\
& \psi_{2m} = (2Y)^{-1}\psi_m(\psi_{m+2}\psi_{m-1}^2 - \psi_{m-2}\psi_{m+1}^2 )   \quad \textrm{for $m \ge  3$}.
\end{split}
\end{equation*} 
We remark that the polynomials $\psi_n$ are reduced by the curve equation~\eqref{eq:EC}, 
in particular $\psi_{n}\in \Z[a,b,X]$ if $n$ is odd, $\psi_{n}\in Y\Z[a,b,X]$ if $n$ is even, and $\psi_n^2 \in \Z[a,b,X]$ for any $n \ge 0$. 
Moreover (see, for instance, \cite[Lemma 3.3]{Was} and the proof of \cite[Lemma 3.5]{Was}), we have 
\begin{equation}\label{eq:div-degree}
\begin{split}
& \psi_n = nX^{(n^2-1)/2} + \textrm{(lower degree terms)} \in \Z[a,b,X] \quad \textrm{for odd $n$}, \\
& \psi_n/Y = nX^{(n^2-4)/2} + \textrm{(lower degree terms)} \in \Z[a,b,X] \quad \textrm{for even $n$}. 
\end{split}
\end{equation}
For any integer $n \ge 1$, let $\Psi_n \in \Z[a,b,X]$ be defined by 
\begin{equation}
\label{eq:Psi}
\Psi_n = \left\{\begin{array}{ll}
 \psi_n &\quad\text{if $n$ is odd},\\
 \psi_n / Y &\quad\text{if $n$ is even}.
\end{array}\right.
\end{equation}
By convention, put $\Psi_0 = 0$. 

 We also define
$$
\phi_n=X \psi_n^2-\psi_{n+1}\psi_{n-1}  \qquad n \ge 1, 
$$
where as before, $\phi_n$ is reduced by the curve equation~\eqref{eq:EC}, in particular, $\phi_n\in \Z[a,b,X]$. 
By convention, put $\phi_0 = 0$. 

We note that an affine point $P=(x,y)$ on $E$ is $n$-torsion if $\Psi_n(x) = 0$ for $n \ge 3$ and 2-torsion if $y=0$. 
On the other hand, if $P=(x,y)$ is not an $n$-torsion point, then by~\cite[Theorem 3.6]{Was}, 
the first coordinate of the point $nP$ is 
\begin{equation}   \label{eq:nP}
 \frac{\phi_n (x)}{\psi_n^2(x)}.
\end{equation}

The following lemma follows directly from~\cite[Corollary 1]{McK} 
(note that the polynomials $\Psi_n$ here are exactly the division polynomials $f_n$ defined in \cite{McK}). 

\begin{lemma}
\label{lem:Psi-height}
There exists an effectively computable constant $c$ depending only on $E$ such that, for any integer $n\geq 1$, we have
$$
\h(\Psi_n) \le cn^2. 
$$
\end{lemma}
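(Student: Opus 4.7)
The plan is simply to appeal to the bound of McKee~\cite[Corollary 1]{McK}, which estimates the heights of the integer division polynomials attached to a Weierstrass cubic. The statement in~\cite{McK} is phrased in terms of a sequence of polynomials $f_n \in \Z[a,b,X]$ satisfying the usual division-polynomial recurrences, and yields an effectively computable constant $B = B(E) > 1$ such that $H(f_n) \le B^{n^2}$ for every $n \ge 1$.

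First I would verify that the sequence $(\Psi_n)_{n \ge 0}$ defined in~\eqref{eq:Psi} coincides with McKee's $f_n$. Both are obtained from the Weierstrass division polynomials $\psi_n$ by removing the factor of $Y$ that appears for even indices, so that in both cases one lands in $\Z[a,b,X]$. The initial values $\Psi_1 = 1$, $\Psi_2 = 1$, $\Psi_3 = 3X^4 + 6aX^2 + 12bX - a^2$, and $\Psi_4 = X^6 + 5aX^4 + 20bX^3 - 5a^2 X^2 - 4abX - 8b^2 - a^3$ match those of $f_n$, and both sequences satisfy the same doubling/addition recursions once the factor of $2Y$ in $\psi_{2m}$ is absorbed via $\Psi_{2m} = \psi_{2m}/Y$. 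An easy induction on $n$ then gives $\Psi_n = f_n$ for all $n \ge 1$.

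Once this identification is made, taking logarithms in the inequality $H(\Psi_n) \le B^{n^2}$ yields $\h(\Psi_n) = \max\{0,\log H(\Psi_n)\} \le (\log B)\, n^2$, so setting $c = \log B$ completes the argument. No substantive obstacle is expected: the content of the lemma lies entirely in the McKee bound, and the only step requiring even momentary care is checking that the normalisation conventions of the two sources agree, which they do by construction of $\Psi_n$ in~\eqref{eq:Psi}.
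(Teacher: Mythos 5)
Your proof takes essentially the same route as the paper's, which also simply cites McKee's Corollary~1 after noting that the $\Psi_n$ are (up to normalisation) McKee's $f_n$. One small slip worth flagging: with the paper's convention $\Psi_n = \psi_n/Y$ for even $n$ you get $\Psi_2 = 2$ and $\Psi_4 = 4(X^6 + 5aX^4 + \cdots)$, not the values you wrote, so the two normalisations differ by a bounded constant factor on even indices — but this is immaterial to the $\h(\Psi_n) \le cn^2$ estimate.
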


We conclude this section by giving a bound on the height of the polynomials $\phi_n$.

\begin{lemma}\label{lem:division}
 There exists an effectively computable constant $c$ depending only on $E$ such that, for any integer $n\geq 1$, we have
 $$
  \h(\phi_n)\leq c n^{2}.
 $$
\end{lemma}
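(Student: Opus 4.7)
The plan is to reduce the bound on $\h(\phi_n)$ to the bound on $\h(\Psi_k)$ provided by Lemma~\ref{lem:Psi-height} by rewriting $\phi_n = X\psi_n^2 - \psi_{n+1}\psi_{n-1}$ in terms of the integral polynomials $\Psi_k$ defined in~\eqref{eq:Psi}. The key observation is that $\psi_k$ contains a factor of $Y$ exactly when $k$ is even, so in each of the two products $\psi_n^2$ and $\psi_{n+1}\psi_{n-1}$ the total exponent of $Y$ is even. Substituting $Y^2 = X^3 + aX + b$ from~\eqref{eq:EC} to eliminate $Y$, we obtain the explicit formulas
$$
\phi_n = \begin{cases}
X\,\Psi_n^2 - (X^3+aX+b)\,\Psi_{n-1}\Psi_{n+1} & \text{if $n$ is odd,} \\
X(X^3+aX+b)\,\Psi_n^2 - \Psi_{n-1}\Psi_{n+1} & \text{if $n$ is even.}
\end{cases}
$$

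In either case, $\phi_n$ is expressed as a difference of two products, each involving at most three polynomials in $\Q[X]$ (obtained by specialising $a,b$ to the rationals defining $E$): namely $\Psi_k$ for $k\in\{n-1,n,n+1\}$, together with $X$ and the curve polynomial $X^3+aX+b$. From~\eqref{eq:div-degree}, we have $\deg_X \Psi_k \le k^2/2 = O(n^2)$ for $k\in\{n-1,n,n+1\}$, and by Lemma~\ref{lem:Psi-height}, $\h(\Psi_k) = O_E(k^2) = O_E(n^2)$. The polynomial $X^3+aX+b$ has $X$-degree $3$ and height $O_E(1)$. Applying Lemma~\ref{lem:HeightPoly} to each of the two products, we bound its height by the sum of the individual heights and degrees, which is $O_E(n^2)$. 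Since $\h(A-B) \le \log 2 + \max\{\h(A), \h(B)\}$ for any two non-zero polynomials $A,B$, we conclude that $\h(\phi_n) = O_E(n^2)$, which is the claimed bound.

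The argument is essentially a direct unwinding of the recursive definitions of $\psi_n$ and $\Psi_n$ followed by a single application each of Lemmas~\ref{lem:HeightPoly} and~\ref{lem:Psi-height}. There is no substantive obstacle; the only care required is to split by the parity of $n$ and to use the curve equation to eliminate the $Y^2$ factors, ensuring that both sides of the formula above remain in $\Z[a,b,X]$ as asserted in the paper.
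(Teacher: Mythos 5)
Your proposal is correct and matches the paper's proof essentially verbatim: the paper likewise splits by the parity of $n$, rewrites $\phi_n$ via the same two identities $\phi_n = X\Psi_n^2 - (X^3+aX+b)\Psi_{n+1}\Psi_{n-1}$ (odd $n$) and $\phi_n = X(X^3+aX+b)\Psi_n^2 - \Psi_{n+1}\Psi_{n-1}$ (even $n$), and then bounds the height of each product by Lemma~\ref{lem:HeightPoly} together with the degree estimate \eqref{eq:div-degree} and the height bound of Lemma~\ref{lem:Psi-height}.
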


\begin{proof}
If $n$ is odd, then we have 
$$
\phi_n = X\Psi_n^2 - (X^3 + aX + b)\Psi_{n+1}\Psi_{n-1}, 
$$
and so, using Lemma~\ref{lem:HeightPoly} and noticing $H(\Psi_n) \ge n$ by \eqref{eq:div-degree} we obtain 
\begin{align*}
\h(\phi_n) & \le \h(\Psi_n^2) + \h((X^3+aX+b)\Psi_{n+1}\Psi_{n-1})\\ 
&\leq 2 (\h(\Psi_n)+\deg \Psi_n) + \h(\Psi_{n+1}) + \h(\Psi_{n-1})\\
& \quad +\h(X^3+aX+b)+ \deg \Psi_{n+1} + \deg \Psi_{n-1}+3. 
\end{align*}
In addition, if $n$ is even, then since 
$$
\phi_n = X (X^3 + aX + b)\Psi_n^2 - \Psi_{n+1}\Psi_{n-1}, 
$$
as the above we obtain 
\begin{align*}
\h(\phi_n) & \le \h((X^3+aX+b)\Psi_n^2) + \h(\Psi_{n+1}\Psi_{n-1}) \\ 
& \leq 2 (\h(\Psi_n)+\deg \Psi_n) +\h(X^3+aX+b)+3 \\
& \quad + \h(\Psi_{n+1}) + \h(\Psi_{n-1}) + \deg \Psi_{n+1} + \deg \Psi_{n-1}.
\end{align*}
Now, the desired result follows from \eqref{eq:div-degree} and Lemma~\ref{lem:Psi-height}.
\end{proof}

\subsection{Summation polynomials}
In this section we recall \textit{summation polynomials} of elliptic curves introduced by Semaev \cite{Semaev}, and bound the height of such polynomials. 

\begin{lemma}\label{lem:summation-polys}
Let $E$ be an elliptic curve of the form~\eqref{eq:EC}  defined over a field $\K$ of characteristic different from $2$ and $3$. 
For any integer $n \geq 2$, there exists a polynomial $\sigma_n \in \Z[X_1,\dots, X_n,a,b]$ 
$($called the $n$-th summation polynomial$)$ with the following property: 
for any $x_1,\dots, x_n \in \overline{\K}$, we have $\sigma_n(x_1,\dots, x_n)=0$ if and only if there are $y_1,\dots, y_n \in \overline{\K}$ 
such that $(x_i,y_i)\in E, 1\leq i\leq n$, and $(x_1,y_1)+\dots+(x_n,y_n)=O$ on the curve. Moreover, the polynomials $\sigma_n$ can be defined by 
\begin{equation} \label{eq:summation-pol}
\begin{split}
\sigma_2(X_1,X_2) & =X_1-X_2, \\ 
\sigma_3(X_1,X_2,X_3) & =(X_1-X_2)^2X_3^2 - 2\left((X_1+X_2)(X_1X_2+a)+2b \right) X_3 \\
            & \qquad +(X_1X_2-a)^2-4b(X_1+X_2), \\
 \sigma_n(X_1,\dots, X_n) & =  \Res_X\left(\sigma_{n-k}(X_1,\dots, X_{n-k-1},X), \sigma_{k+2}(X_{n-k}, \dots, X_n,X) \right)
\end{split}
\end{equation}
for any $n \geq 4$ and $1\leq k \leq n-3$, where $\Res_X$ denotes the resultant with respect to the variable $X$. 
 
For any $n \ge 3$, $\sigma_n$ is an irreducible symmetric polynomial which has degree $2^{n-2}$ in each variable.
\end{lemma}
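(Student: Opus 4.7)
The plan is to prove the lemma by induction on $n$, establishing the defining vanishing property, the recursive formula, the degree bound $2^{n-2}$, symmetry, and irreducibility in parallel.

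For the base cases: when $n=2$, the identity $(x_1,y_1)+(x_2,y_2)=O$ is equivalent to $(x_2,y_2)=-(x_1,y_1)=(x_1,-y_1)$, hence to $x_1=x_2$, so $\sigma_2=X_1-X_2$ works. For $n=3$, three affine points on $E$ sum to $O$ if and only if they are collinear; parametrising the line through $(x_1,y_1)$ and $(x_2,y_2)$ as $Y=\lambda X+\mu$, substituting into \eqref{eq:EC}, using Vieta's formulas to express the elementary symmetric functions of $x_1,x_2,x_3$ in terms of $\lambda,\mu$, and eliminating the $y_i$ by squaring, one recovers the explicit $\sigma_3$ given in \eqref{eq:summation-pol}. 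A direct inspection shows $\sigma_3$ is symmetric of degree $2$ in each variable and irreducible.

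For the inductive step with $n\geq 4$ and $1\leq k\leq n-3$, take $\sigma_n$ as defined in \eqref{eq:summation-pol}. The resultant vanishes at $(x_1,\dots,x_n)$ iff $\sigma_{n-k}(x_1,\dots,x_{n-k-1},X)$ and $\sigma_{k+2}(x_{n-k},\dots,x_n,X)$, viewed as polynomials in $X$, share a common root $x_0\in\overline{\K}$. By the inductive hypothesis applied to each factor, this amounts to the existence of ordinates $y_i$ and auxiliary ordinates $y_0,y_0'$ with $(x_0,y_0),(x_0,y_0')\in E$ and
$$\sum_{i=1}^{n-k-1}(x_i,y_i)+(x_0,y_0)=O \quad\text{and}\quad \sum_{j=n-k}^{n}(x_j,y_j)+(x_0,y_0')=O.$$
Since $y_0'\in\{y_0,-y_0\}$, after possibly replacing $y_{n-k},\dots,y_n,y_0'$ by their negatives we may assume $y_0'=-y_0$; adding the two relations then yields $\sum_{i=1}^{n}(x_i,y_i)=O$, and the converse is immediate. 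The degree claim follows from the Sylvester resultant formula: $\sigma_{n-k}(\dots,X)$ has degree $2^{n-k-2}$ in $X$ and in each $X_i$ ($i\leq n-k-1$), $\sigma_{k+2}(\dots,X)$ has degree $2^{k}$ in $X$ and in each $X_j$ ($j\geq n-k$), so the resultant has degree at most $2^{k}\cdot 2^{n-k-2}=2^{n-2}$ in each outer variable.

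The main obstacle is symmetry together with irreducibility, since the recursion depends on the choice of $k$ and symmetry in $X_1,\dots,X_n$ is not manifest. To handle both, I would argue geometrically. The closed subvariety $W_n\subset E^n$ cut out by $P_1+\cdots+P_n=O$ is isomorphic to $E^{n-1}$ via the projection forgetting the last point (whose inverse sends $(P_1,\dots,P_{n-1})$ to $(P_1,\dots,P_{n-1},-(P_1+\cdots+P_{n-1}))$), hence is irreducible of dimension $n-1$. Its image under the coordinatewise $x$-projection $E^n\to\mathbb{A}^n$ is an irreducible closed subset invariant under all coordinate permutations; since the $x$-projection has finite fibres, the image has dimension $n-1$ and is therefore a hypersurface, cut out by a unique (up to a scalar factor) irreducible symmetric polynomial in $\K[X_1,\dots,X_n]$. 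The polynomial produced by the recursion vanishes on this image and has degree at most $2^{n-2}$ in each variable, hence must agree up to a scalar with this intrinsic polynomial for every choice of $k$, simultaneously yielding symmetry, irreducibility, the $k$-independence of $\sigma_n$, and the exactness of the degree bound.
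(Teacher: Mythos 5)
The paper does not actually prove this lemma: it is stated as a recollection of Semaev's result with a citation, and only the height bound in the subsequent Lemma~5.7 is proved in the paper. So your proposal is being measured only against the claim itself, not against a paper proof.

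Your overall plan is the standard one and the first two thirds are sound: the $n=2$ and $n=3$ base cases, the resultant recursion for the vanishing property, and the degree estimate from the Sylvester matrix are all correct in outline. Two points do need more care, and one of them is a real gap. First, a minor omission: a resultant $\Res_X(f,g)$ (taken with nominal degrees $2^{n-k-2}$ and $2^k$) vanishes not only when $f$ and $g$ share a root in $X$ but also when both leading coefficients vanish simultaneously; to get the genuine ``if and only if'' asserted in the lemma one must check that the extra vanishing locus is still contained in the image of $W_n$, and this is not automatic. (A reassuring computation for $n=3$: if $x_1=x_2$ is a root of $X^3+aX+b$, the $X_3$-degree of $\sigma_3(x_1,x_1,X_3)$ drops to $0$ and $\sigma_3(x_1,x_1,x_3)=(3x_1^2+a)^2$, which is nonzero precisely because $4a^3+27b^2\ne 0$; the general inductive step needs a similar check, not just the generic case.)

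The more serious gap is in the final paragraph. From the fact that the recursion-defined $\sigma_n$ vanishes on the irreducible $S_n$-invariant hypersurface $V_n=\overline{x(W_n\cap(E\setminus O)^n)}$, with defining irreducible polynomial $h_n$, together with the proved ``iff'' (so $Z(\sigma_n)=V_n$), you only get $\sigma_n = c\,h_n^{a}$ for some scalar $c$ and integer $a\ge 1$. The inequality $\deg_{X_i}\sigma_n \le 2^{n-2}$ does \emph{not} by itself force $a=1$, nor does it pin down $\deg_{X_i} h_n$; the word ``hence'' skips this. What is missing is a lower bound on $\deg_{X_n} h_n$: fixing generic $x_1,\dots,x_{n-1}$ and choosing signs of $y_1,\dots,y_{n-1}$ (modulo the global negation $P\mapsto -P$) produces $2^{n-2}$ values $x_n = x\bigl(-(P_1+\cdots+P_{n-1})\bigr)$, and one checks these are generically pairwise distinct; this gives $\deg_{X_n} h_n = 2^{n-2}$, and by symmetry the same in every variable. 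Only then does $a\cdot 2^{n-2} \le 2^{n-2}$ force $a=1$, giving irreducibility of $\sigma_n$, the exact degree, the $k$-independence, and the symmetry all at once. Without this count the argument for irreducibility and exact degree is incomplete.
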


We now bound the height of summation polynomials. 

\begin{lemma}\label{lem:height_of_summation}   
Let $E$ and $\sigma_n \in \Z[X_1,\dots, X_n,a,b]$, $n \geq 2$, be  defined as in Lemma~\ref{lem:summation-polys}, 
$\K = \C$ and $\sigma_n$ of the form \eqref{eq:summation-pol}.
Then
$$
\h(\sigma_n)=\exp\big( O(n) \big),
$$
where the implied constant is effectively computable depending only on the curve $E$.
\end{lemma}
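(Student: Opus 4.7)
The plan is to argue by induction on $n$, proving the quantitative bound $\h(\sigma_n) \le C n^2 2^n$ for some constant $C$ depending only on $E$; this immediately gives $\h(\sigma_n) = \exp(O(n))$. The base cases $n = 2$ and $n = 3$ follow at once from the explicit formulas in~\eqref{eq:summation-pol}.

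For the inductive step I would apply the recurrence with $k = 1$, writing
$$
\sigma_n = \Res_X\bigl(\sigma_{n-1}(X_1, \ldots, X_{n-2}, X),\ \sigma_3(X_{n-1}, X_n, X)\bigr),
$$
and expand this resultant as the determinant of its Sylvester matrix. By Lemma~\ref{lem:summation-polys}, the $X$-degrees of the two inputs are $2^{n-3}$ and $2$, so the Sylvester matrix is $N \times N$ with $N = 2^{n-3} + 2$. Its non-zero entries are either coefficients of $\sigma_{n-1}$ in $X$, which (as elements of $\Z[X_1,\ldots,X_{n-2},a,b]$) have height at most $H(\sigma_{n-1})$ and total degree $O(n \cdot 2^n)$, or coefficients of $\sigma_3$ in $X$, whose height and degree are bounded by constants depending only on $E$.

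By the Leibniz formula the determinant is a signed sum of $N!$ products, each containing exactly $\deg_X \sigma_3 = 2$ entries from the $\sigma_{n-1}$ rows and exactly $\deg_X \sigma_{n-1} = 2^{n-3}$ entries from the $\sigma_3$ rows. Applying the multivariate version of Lemma~\ref{lem:HeightPoly} bounds the height of each such product by
$$
2\bigl(\h(\sigma_{n-1}) + O(n \cdot 2^n)\bigr) + 2^{n-3} \cdot O(1) = 2\h(\sigma_{n-1}) + O(n \cdot 2^n),
$$
while the $N!$ summands contribute at most an extra $\log N! = O(n \cdot 2^n)$. This yields the recursion $\h(\sigma_n) \le 2\h(\sigma_{n-1}) + O(n \cdot 2^n)$; dividing by $2^n$ and iterating from the base case gives $\h(\sigma_n)/2^n = O(n^2)$, i.e.\ the claimed $\exp(O(n))$ bound, with the implied constant depending effectively only on $E$.

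The main obstacle will be keeping careful track of the total degrees of the multivariate coefficient polynomials arising in the Sylvester matrix (these degrees themselves grow like $n \cdot 2^n$ through the recursion), and confirming that the height-of-product estimate in Lemma~\ref{lem:HeightPoly}, stated for $\C[X]$, applies in the multivariate setting; the latter is precisely the content of \cite[Lemma~1.2]{KPS}. Once this bookkeeping is set up, the exponential growth of the degrees is absorbed into the additive $O(n \cdot 2^n)$ term, which is perfectly balanced by the doubling factor in the recursion, so the exponential base in the final bound remains $2$.
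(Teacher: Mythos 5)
Your proposal is correct, and it follows a genuinely different route from the paper's proof, so let me compare the two.

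The paper takes a \emph{balanced} split in the recurrence~\eqref{eq:summation-pol}, namely $k = \lfloor (n-1)/2 \rfloor$, so that $\sigma_n = \Res_X(\sigma_{n-k},\sigma_{k+2})$ with both pieces of size roughly $n/2$. With this choice the Sylvester matrix has about $2^{n/2}+2^{n/2}$ rows, and instead of invoking a multivariate height-of-product lemma the paper crudely counts: the Leibniz expansion has at most $(d+1)^m(m+1)^d = \exp(\exp(O(n)))$ summands, and each product $f_1\cdots f_m g_1\cdots g_d$, once expanded, has at most $\exp(\exp(O(n)))$ monomials whose coefficients are bounded by $H(\sigma_{n-k})^m H(\sigma_{k+2})^d$. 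Taking logarithms produces
$$
\h(\sigma_n) \le \exp(O(n)) + m\,\h(\sigma_{n-k}) + d\,\h(\sigma_{k+2})
$$
with $m,d\le 2^{(n-1)/2}$; the induction hypothesis $\h(\sigma_j)\le\exp(cj)$ for $j<n$ then yields $\h(\sigma_n)\le \exp(O(n)) + 2^{(n+1)/2}\exp(c(n/2+3/2))$, which is $\le\exp(cn)$ once $c$ is taken large. The balanced split is essential here: with only one generation of recursion the factor $m \approx 2^{n/2}$ in front of $\h(\sigma_{n-k})\approx\exp(cn/2)$ must still fit inside $\exp(cn)$, which forces $n-k$ and $k+2$ both to be about $n/2$.

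You instead use the \emph{maximally unbalanced} split $k=1$, which is also permitted by Lemma~\ref{lem:summation-polys}. This trades the single-step closure of the induction for an explicit linear recurrence: the factor of $2$ in front of $\h(\sigma_{n-1})$ reflects that exactly two entries in each Leibniz term come from the high-degree block, while the low-degree $\sigma_3$ block contributes only bounded-height coefficients and thus only additive $O(2^n)$ terms. The extra bookkeeping — $\log N! = O(n\,2^n)$ and the total degrees of the coefficient polynomials $O(n\,2^n)$ — is all absorbed into the additive $O(n\,2^n)$ error, and normalizing by $2^n$ telescopes the recursion to a polynomial bound $\h(\sigma_n)/2^n = O(n^2)$. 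This is cleaner to verify step by step and gives a slightly more concrete bound $\h(\sigma_n) = O(n^2 2^n)$, at the cost of needing the multivariate height-of-product estimate (or at minimum a count of common monomials) rather than the paper's all-in-one bound on $\exp(\exp(O(n)))$ monomials. Your caution about the exact form of the multivariate KPS lemma is well placed but harmless: even if the degree term picks up an extra factor $\log(n+1)$ (number of variables) or $\log(\text{number of factors}) = O(n)$, the error stays $O(n^2 2^n)$ or $O(n^3 2^n)$, both still $\exp(O(n))$. Both arguments are effective. In short: same inductive skeleton, different choice of split, with yours yielding an arguably more transparent recursion and theirs avoiding the need to divide by $2^n$.
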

\begin{proof}
We proceed by induction. Assume that $n \ge 4$ and 
\begin{equation}  \label{eq:k<n}
\h(\sigma_j)\leq \exp(cj), \quad 2\leq j < n, 
\end{equation}
for some constant $c$. Write $\sigma_n$ in the form~\eqref{eq:summation-pol} with $k = \lfloor (n-1)/2 \rfloor$.

Put
\begin{align*}
d &= \deg_X \sigma_{n-k}(X_1,\dots, X_{n-k-1},X),  \\
m &= \deg_X \sigma_{k+2}(X_{n-k}, \dots, X_n,X).
\end{align*} 
By definition, $\sigma_n$ is the determinant of the Sylvester matrix of the polynomials 
$\sigma_{n-k}(X_1,$ $\dots, X_{n-k-1},X)$ and $\sigma_{k+2}(X_{n-k}, \dots, X_n,X)$ with respect to $X$. 
By expanding this determinant, we know that $\sigma_n$ is the sum of at most 
(because by Lemma~\ref{lem:summation-polys}, $d = 2^{n-k-2}$ and $m = 2^k$)
\begin{equation}\label{eq:number_of_summands}
(d+1)^m (m+1)^d = \exp( \exp( O(n)))
\end{equation}
summands of the form
\begin{equation}\label{eq:prod}
f_1 \dots f_m g_1 \dots g_d, 
\end{equation}
where $f_i\in \Z[X_1, ..., X_{n-k-1}]$ and $g_j\in\Z[X_{n-k}, ..., X_n]$ are coefficients of  $\sigma_{n-k}$ and $\sigma_{k+2}$ respectively 
considered as polynomials with respect to the variable $X$. Clearly, for each $i$ and each $j$, 
\begin{equation}\label{eq:bound_on_hight}
\h(f_i)\leq \h(\sigma_{n-k})  \mand \h(g_j)\leq \h(\sigma_{k+2}). 
\end{equation}
In addition, by Lemma~\ref{lem:summation-polys} the degree of $f_i$ and $g_j$ in each variable is at most $2^{n-k-2}$ and $2^{k}$, respectively, 
thus $f_i$ and $g_j$ have at most $(2^{n-k-2} + 1)^{n-k-1}$ and $(2^k + 1)^{k+1}$ nonzero terms. Then, expanding all the products in \eqref{eq:prod}, the maximal number of common monomials is at most 
\begin{equation}\label{eq:monomials}   
(2^{n-k-2} + 1)^{m(n-k-1)} \cdot (2^k + 1)^{d(k+1)} = \exp(\exp( O(n))).
\end{equation}
Hence, it follows from~\eqref{eq:k<n}, \eqref{eq:number_of_summands}, \eqref{eq:prod}, \eqref{eq:bound_on_hight} and \eqref{eq:monomials} that
\begin{align*}
\h(\sigma_n) &\leq \exp(O(n)) + \exp(O(n)) + m \cdot \h(\sigma_{n-k})+ d \cdot \h(\sigma_{k+2})\\
&\leq \exp(O(n)) + 2^{(n-1)/2} \exp(c(n-k)) + 2^{(n-1)/2}\exp(c(k+2)) \\
&\leq \exp(O(n)) + 2^{(n+1)/2} \exp(c(n/2+3/2)),
\end{align*}
and the desired result follows by choosing the constant $c$ large enough. 
Moreover, since the implied constants in both \eqref{eq:number_of_summands} and \eqref{eq:monomials} are effectively computable, 
the constant $c$ is also effectively computable. 
\end{proof}

\subsection{Unlikely Intersections results} 

In this section, we list a series of results about multiplicative dependence of rational functions in $\Qbar(X)$ and linear dependence on elliptic curves for points defined over $\Qbar$. Namely, in order to prove Theorems~\ref{thm:Fp-A}, \ref{thm:EC} and \ref{thm:EC-C}, one needs first to consider the analogous problems in $\Qbar$ (rather than $\overline{\mathbb{F}}_p$), and to show finiteness results. These problems fit in the more general framework of problems of unlikely intersections, which have been deeply studied in the last decades (see, for instance, \cite{Zannier}). 

More specifically, the following lemma is an effective version of a result of Maurin \cite[Th{\'e}or{\`e}me 1.2]{Maurin08} concerning multiplicative dependence of values of rational functions in $\Q(X)$, which in fact was initially proved by Bombieri, Masser and Zannier~\cite{BMZ} under a more restrictive condition of multiplicative independence of the involved functions modulo constants.

\begin{lemma}
\label{lem:eff_Maurin}
Let $\K$ be a number field and let $f_1,\ldots, f_m \in \K(X)$ be non-zero multiplicatively independent rational functions defined over $\K$. Then, the cardinality of the set of $\alpha \in \Qbar$ for which there exist linearly independent vectors $(k_1, \ldots, k_m)$ and $(\ell_1, \ldots, \ell_m) \in \Z^m$ such that 
\[ f_1(\alpha)^{k_1}\cdots f_m(\alpha)^{k_m}=f_1(\alpha)^{\ell_1}\cdots f_m(\alpha)^{\ell_m}=1 \]
is bounded by an effectively computable constant $C$ depending only on $\K$ and $f_1,\ldots, f_m$.
\end{lemma}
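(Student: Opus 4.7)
The statement is an effective refinement of Maurin's theorem~\cite{Maurin08}, and I would follow the standard geometric strategy of unlikely intersections. Set $\varphi = (f_1, \ldots, f_m) \colon \A^1 \dashrightarrow \Gm^m$ and let $\cC \subset \Gm^m$ be the Zariski closure of the image of $\varphi$. Multiplicative independence of $f_1, \ldots, f_m$ translates exactly into the condition that $\cC$ is not contained in any proper subtorus of $\Gm^m$, for such containment would force a nontrivial relation $\prod f_i^{k_i} = 1$ on $\A^1$. An element $\alpha$ belongs to the exceptional set of the lemma if and only if $\varphi(\alpha)$ belongs to
$$\cC^{[2]} := \cC \cap \bigcup_H H,$$
with $H$ ranging over all algebraic subgroups of $\Gm^m$ of codimension at least $2$.

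I would then prove that $\cC^{[2]}$ is finite, with an effective upper bound on $\#\cC^{[2]}$ depending only on $\K$ and the $f_i$, in two steps. First, apply the effective bounded-height theorem of Bombieri--Masser--Zannier (in the quantitative form refined by Habegger) to produce an explicit constant $B = B(\K, f_1, \ldots, f_m)$ such that $\h(\varphi(\alpha)) \le B$ for every $\alpha$ in the exceptional set, and then deduce $\h(\alpha) \le B'$ by functoriality of heights under $\varphi$, via Lemma~\ref{lem:hight-composition}. Second, invoke a quantitative version of Maurin's theorem (as made explicit by Amoroso--Viada, combined with relative Lehmer-type estimates of Amoroso--David) bounding $\#\cC^{[2]}$ by an effective function of $\deg \cC$ and $m$ alone. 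Since $\varphi$ is generically at most $(\max_i \deg f_i)$-to-one, the preimage $\varphi^{-1}(\cC^{[2]})$ has effectively bounded cardinality, producing the required constant $C$.

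The main obstacle is the uniformity of the count in the exponent vectors $(k_1, \ldots, k_m)$ and $(\ell_1, \ldots, \ell_m)$ defining the subgroups $H$: these exponents are a priori unbounded, yet $\#\cC^{[2]}$ must be controlled independently of them. This is the substance of the quantitative Maurin theorem. The approach is to use the relative Lehmer lower bound to force points of $\cC \cap H$ to have small normalised height uniformly in $H$, and then apply a Bezout-style count on the intersection of $\cC$ with a finite family of auxiliary subvarieties of bounded degree. With those ingredients in hand, converting the height bound on $\varphi(\alpha)$ into one on $\alpha$ via Lemmas~\ref{lem:HeightPoly} and~\ref{lem:hight-composition} and concluding through Northcott's theorem with explicit constants is routine.
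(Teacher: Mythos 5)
Your outline is conceptually the right one, but the paper does not actually carry out this argument: its proof of Lemma~\ref{lem:eff_Maurin} is a pure citation chain, pointing to Maurin~\cite{Maurin08} for the ineffective statement, to Bombieri--Habegger--Masser--Zannier~\cite{BHMZ10} for the observation that effectivity would follow from an effective version of Habegger's bounded height theorem~\cite{Hab09}, and to Habegger~\cite{Hab17} for supplying that effective version. Your sketch is a plausible reconstruction of what lies behind those references, but two points deserve more care. First, the decisive effectivity input should be attributed to~\cite{Hab17} rather than to Amoroso--Viada or Amoroso--David; it is~\cite{Hab17} that delivers the uniform control on the exponent vectors defining the subgroups $H$, which is exactly the step your proposal itself singles out as the ``main obstacle'' but then compresses into a sketch of a Lehmer-plus-Bezout count. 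Second, since the lemma only assumes the $f_i$ multiplicatively independent (not independent modulo constants), the image curve $\cC$ may lie in a coset of a proper subtorus, so the bounded height theorem cannot be applied to all of $\cC$ at once; one must pass through the structure theory of the anomalous locus and the descent argument of~\cite{BHMZ10}. Neither is a fatal gap at the level of an outline, but both must be addressed before the passage from ``bounded height'' to ``effectively finite'' can honestly be called routine.
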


\begin{proof}
The ineffective version of this result was proved by Maurin in~\cite[Th{\'e}or{\`e}me 1.2]{Maurin08}. In~\cite{BHMZ10}, Bombieri, Habegger, Masser and Zannier gave a different argument to prove~\cite[Th{\'e}or{\`e}me 1.2]{Maurin08}, showing that effectivity would follow from an effective version of Habegger's theorem \cite{Hab09}. This was finally proved by Habegger himself in \cite{Hab17}.
\end{proof}

The following lemma is a special case of  \cite[Th{\' e}or{\` e}me H]{Gala}. The latter was a conditional result due to Viada \cite{Viada2008} and made unconditional by Galateau \cite{Gala}.

\begin{lemma}  \label{lem:Gala}
	Let $E$ be an elliptic curve defined over a number field $\K$ by  a Weierstrass equation, and let $n \ge 1$ be an integer. 
	Let $\cC$ be an irreducible curve in $E^n$, also defined over $\K$, with coordinates $(X_1,Y_1,\dots ,X_n,Y_n )$ such that the points $(X_j,Y_j)$ are linearly independent over $\mathrm{End}(E)$.
	Then, there are at most finitely many  points $c\in \cC(\C)$ such that $(X_j(c),Y_j(c))$, $j=1,\ldots,n$, 
	satisfy two independent linear relations over $\mathrm{End}(E)$. 
\end{lemma}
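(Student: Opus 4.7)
The plan is to deduce this directly from Galateau's Théorème~H in \cite{Gala}, of which the present statement is essentially a reformulation. First I would translate the hypothesis into the language of unlikely intersections: having two independent linear relations over $\mathrm{End}(E)$ between the points $(X_j(c),Y_j(c))$ means exactly that $c$ lies in some algebraic subgroup $H\subseteq E^n$ of codimension at least $2$. So the set in question is the intersection of the curve $\cC$ with the union of all such subgroups $H$. Since $\dim\cC + \dim H \le n-1$, a generic point of $\cC$ would not lie on such an $H$, which is why the intersections are called \emph{unlikely}; the content of the lemma is that, in spite of the infinite union, the overall intersection is finite.

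Next I would invoke the general unlikely intersection framework in $E^n$. The hypothesis that $(X_1,Y_1),\dots,(X_n,Y_n)$ are independent over $\mathrm{End}(E)$ in $E(\overline{\K(\cC)})$ prevents $\cC$ from being contained in any translate of a proper algebraic subgroup of $E^n$ (the needed non-degeneracy condition for unlikely intersection statements). Under this hypothesis, Théorème~H of \cite{Gala} asserts the finiteness of $\cC\cap\bigcup_{\mathrm{codim}\, H\ge 2}H$, which is precisely the conclusion sought.

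The heart of the argument, which I would only sketch rather than reproduce, has two ingredients. The first is a \emph{bounded height} theorem for subvarieties of $E^n$, the elliptic analogue of Habegger's result in the toric setting, proving that the set of unlikely intersection points lies in a set of bounded Néron--Tate height on $E^n$. The second is a \emph{bounded degree} control, obtained via Lehmer-type lower bounds on the essential minimum of anomalous subvarieties, which combined with Vojta/Bogomolov-type counting yields finiteness. Viada~\cite{Viada2008} established the whole scheme conditionally on a generalised Lehmer inequality on $E^n$, and Galateau~\cite{Gala} removed the conditionality by proving exactly the Lehmer-type statement required.

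The main technical obstacle, and the reason this is stated as a cited lemma rather than proved, is this unconditional Lehmer-type lower bound of \cite{Gala}: it requires delicate Arakelov-theoretic and transcendence-style estimates on powers of elliptic curves that go well beyond the scope of the present paper. I would therefore present the lemma as a direct consequence of \cite{Viada2008,Gala}, noting that the non-degeneracy assumption on the coordinate morphisms $(X_j,Y_j)$ is exactly the hypothesis under which Galateau's theorem applies, and that the finiteness conclusion requires no further argument on our part.
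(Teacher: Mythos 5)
Your proposal is correct and takes exactly the paper's route: the lemma is stated without proof as a special case of Galateau's Th\'eor\`eme~H, citing Viada~\cite{Viada2008} for the conditional statement and Galateau~\cite{Gala} for the removal of the Lehmer-type hypothesis. Your added sketch of the bounded-height and bounded-degree ingredients is accurate background but is not required, since the paper simply invokes the citation.
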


The following result is a special case of Corollary~\ref{cor:UI} in Section~\ref{sect:UI}. 
It will be used  in the proof of Theorem~\ref{thm:EC}. 

\begin{lemma}\label{lem:new2.7}
	Let $E$ be an elliptic curve defined over a number field $\K$ by a Weierstrass equation, and let $\bphi=(\varphi_1,\dots, \varphi_m)$ and $\brho =(\varrho_1,\dots,\varrho_n)$ be vectors of non-zero rational functions in $\K(X)$ such that 
	$\varphi_1,\dots, \varphi_m$ are multiplicatively independent and 
	the points $(\varrho_1(X), \cdot), \ldots, (\varrho_n(X), \cdot)$ in $E(\overline{\Q(X)})$ are linearly independent over $\Z$. 
	Suppose moreover that at least one of the following conditions holds:
	\begin{enumerate}
		\item $\varphi_1, \dots , \varphi_m $ are multiplicatively independent modulo constants;
		\item the points $(\varrho_1(X), \cdot), \ldots, (\varrho_n(X), \cdot)$ are linearly independent over $\mathrm{End}(E)$ modulo points in $E(\overline{\Q})$.
	\end{enumerate}	
%	Then, there is an integer $N$ depending only on $E, \bphi, \brho$ such that there are at most $N$ elements 
Then, there are at most finitely many $\alpha \in \overline{\Q}$ such that $\varphi_1(\alpha), \ldots, \varphi_m(\alpha)$ are multiplicatively dependent and the points $(\varrho_1(\alpha), \cdot), \ldots, (\varrho_n(\alpha), \cdot)$ in $E(\overline{\Q})$ are linearly dependent over~$\Z$.
\end{lemma}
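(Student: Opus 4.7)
The plan is to deduce this lemma from the general unlikely-intersections result \emph{Corollary~\ref{cor:UI}}, which is stated and proved in Section~\ref{sect:UI} and concerns irreducible curves in the split semiabelian variety $\G_m^m \times E^n$ meeting algebraic subgroups of codimension at least~$2$. Specifically, I would encode the data of the lemma into the curve $\mathcal{C} \subset \G_m^m \times E^n$ obtained as the image of the morphism
\[
X \longmapsto \bigl(\varphi_1(X),\ldots,\varphi_m(X),(\varrho_1(X),\beta_1(X)),\ldots,(\varrho_n(X),\beta_n(X))\bigr),
\]
where for each $i$ a $\beta_i \in \overline{\K(X)}$ is fixed so that $(\varrho_i(X),\beta_i(X))$ lies on $E$; the resulting $\mathcal{C}$ is irreducible and defined over a finite extension of $\K$. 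An $\alpha$ in the exceptional set of the lemma then yields a point of $\mathcal{C}(\overline{\Q})$ contained in a subgroup of the form $H_1 \times H_2 \subset \G_m^m \times E^n$, where $H_1 \subsetneq \G_m^m$ records the multiplicative relation among the $\varphi_i(\alpha)$ and $H_2 \subsetneq E^n$ records the $\Z$-linear relation among the $(\varrho_i(\alpha),\cdot)$. The codimension of $H_1 \times H_2$ in $\G_m^m \times E^n$ is at least~$2$.

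With this setup in place, Corollary~\ref{cor:UI} delivers the desired finiteness as soon as $\mathcal{C}$ is verified to satisfy the nondegeneracy hypothesis appearing in its statement. The verification rests on the fact that every algebraic subgroup of the split semiabelian variety $\G_m^m \times E^n$ itself factors as $G_1 \times G_2$ with $G_1 \subseteq \G_m^m$ and $G_2 \subseteq E^n$, so that containment of $\mathcal{C}$ in a translate of a proper such subgroup forces one of the two projections $\pi_1(\mathcal{C}) \subset \G_m^m$ or $\pi_2(\mathcal{C}) \subset E^n$ to lie in a translate of a proper algebraic subgroup of the corresponding factor. The multiplicative independence of $\varphi_1,\dots,\varphi_m$ and the $\Z$-linear independence of $(\varrho_1(X),\cdot),\dots,(\varrho_n(X),\cdot)$ already eliminate the \emph{non-translated} containments on either side; the supplementary hypothesis (1) then upgrades the $\G_m^m$-side to rule out arbitrary-translate containments, whereas (2) does the analogous upgrade on the $E^n$-side. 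Since a proper subgroup $G_1 \times G_2$ must have at least one proper factor, whichever of (1) or (2) is assumed is precisely enough to block $\mathcal{C}$ from lying in a translate whose \emph{responsible} proper factor lies on the corresponding side, and this is what the nondegeneracy hypothesis of Corollary~\ref{cor:UI} requires.

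Once the nondegeneracy is confirmed, Corollary~\ref{cor:UI} asserts that $\mathcal{C}$ meets the union of all codimension-$\geq 2$ algebraic subgroups of $\G_m^m \times E^n$ in only finitely many points. Projecting to the first (parameter) coordinate recovers a finite set of values $\alpha \in \overline{\Q}$, which is exactly the content of the lemma.

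The main obstacle I anticipate is matching the precise form of the nondegeneracy hypothesis of Corollary~\ref{cor:UI} (torsion-translates versus arbitrary translates, $\Z$- versus $\End(E)$-coefficients) to the asymmetric disjunction (1)-or-(2) offered by the lemma; it is in this matching that the subtle distinction between linear independence over $\Z$ and over $\End(E)$ (relevant when $E$ has complex multiplication) must be handled. Once the definitions are unwound and the matching is confirmed, the deduction itself is essentially formal.
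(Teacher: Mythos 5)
Your proposal takes exactly the paper's route: the paper proves Lemma~\ref{lem:new2.7} by observing that it is a special case of Corollary~\ref{cor:UI}, applied to the curve $\mathcal{C}\subset\Gm^m\times E^n$ parametrized by $X\mapsto(\varphi_1(X),\ldots,\varphi_m(X),(\varrho_1(X),\cdot),\ldots,(\varrho_n(X),\cdot))$, and that is precisely what you do. One small remark: the coset-containment analysis in your middle paragraph is superfluous, because Corollary~\ref{cor:UI} is already phrased directly in terms of multiplicative independence of the coordinate functions $Z_i$ (modulo constants, for~(1)) and $\Z$- resp.\ $\End(E)$-linear independence of the $P_j$ (modulo constant points, for~(2)); with $Z_i=\varphi_i$ and $P_j=(\varrho_j(X),\cdot)$ these hypotheses are those of the lemma verbatim, so no passage through subgroups and translates is required.
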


In full generality, the proof of this result is in principle not effective, and this makes the constant $c_2$ in the statement of Theorem \ref{thm:EC} ineffective. If $n=1$, the condition of linear dependence of the point $(\varrho_1(\alpha), \cdot)$ means that it is a torsion point, and in this case, it is possible to give an effective version of Lemma~\ref{lem:new2.7} when $\K = \Q$, which is the content of the following lemma.
   
\begin{lemma}   \label{lem:tors}
In Lemma~\ref{lem:new2.7} when $n=1$ and $\K=\Q$, the order of the torsion point $(\varrho_1(\alpha), \cdot)$ can be effectively upper bounded uniformly, and in particular, the cardinality of the set of $\alpha \in \Qbar$ such that $\varphi_1(\alpha), \ldots, \varphi_m(\alpha)$ are multiplicatively dependent and $(\varrho_1(\alpha), \cdot)$ has finite order can be effectively bounded. 
\end{lemma}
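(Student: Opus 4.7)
\medskip
\noindent\emph{Proof plan.}
The finiteness of the set in question has already been established in Lemma~\ref{lem:new2.7}; the task now is to upgrade the conclusion to an effective uniform bound on the order $N=\ord((\varrho_1(\alpha),\cdot))$. Observe first that the ``in particular'' clause then follows immediately: as soon as one knows $N\leq N_0$ for an effective constant $N_0=N_0(\bphi,\varrho_1,E)$, every exceptional $\alpha$ is a root of the single polynomial
$$
\prod_{N=1}^{N_0}\Psi_N(\varrho_1(X))\in\Q[X],
$$
whose degree depends effectively only on $\varrho_1,E,N_0$.

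The plan is to combine three effective inputs. \emph{Silverman's specialisation theorem} applied to the $\Q(X)$-point $(\varrho_1(X),\cdot)$, which is non-torsion in $E(\overline{\Q(X)})$ by hypothesis (this uses $n=1$: $\Z$-linear independence here just means non-torsion), provides an effective constant $H_0=H_0(\varrho_1,E)$ such that $h(\alpha)\leq H_0$ whenever $(\varrho_1(\alpha),\cdot)$ is torsion on $E$. \emph{Galois invariance}: since $\varphi_1,\ldots,\varphi_m,\varrho_1\in\Q(X)$, both defining conditions of the exceptional set are preserved by $\Gal(\ov\Q/\Q)$, so this set is a union of full Galois orbits; in particular $[\Q(\alpha):\Q]\le\#\mathcal S_2$, and any effective bound on $\#\mathcal S_2$ yields an effective bound on the degree of each $\alpha$, and conversely. \emph{Merel--Oesterl\'e--Parent}: the effective form of Merel's uniform boundedness theorem over $\Q$ gives an explicit function $B(d)$ such that every torsion point of $E/\Q$ defined over a number field of degree $\leq d$ has order at most $B(d)$; applied to $\Q(\varrho_1(\alpha),\beta)$, whose degree over $\Q$ is at most $2\deg(\varrho_1)\cdot[\Q(\alpha):\Q]$, this will produce the desired $N_0$.

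The step requiring care is therefore the effective bound on $\#\mathcal S_2$. The idea is to inspect the proof of Lemma~\ref{lem:new2.7} (a special case of Corollary~\ref{cor:UI}) in the case $n=1$, $\K=\Q$. In that proof the non-effective ingredient on the elliptic side is the Viada--Galateau theorem (Lemma~\ref{lem:Gala}), which is needed because two independent linear relations on $n\geq 1$ points must be handled simultaneously. For $n=1$ the elliptic condition reduces to ``$(\varrho_1(\alpha),\cdot)$ is a torsion point'', a single condition, and this can be handled directly by the height bound from Silverman's specialisation theorem above. Combining this with the effective Bombieri--Masser--Zannier Bounded Height Theorem (and the effective Maurin, Lemma~\ref{lem:eff_Maurin}) applied to the curve $(\varphi_1(X),\ldots,\varphi_m(X))$ in $\Gm^m$, and running an explicit Northcott argument in a common number field containing $\alpha$ and $\varrho_1(\alpha)$, one obtains an effective bound on the number of Galois orbits of $\alpha\in\mathcal S_2$, hence on $\#\mathcal S_2$ itself. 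Feeding this into Step 2 gives an effective bound on $[\Q(\alpha):\Q]$, and then Merel--Oesterl\'e--Parent gives the effective $N_0$.

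\medskip
\noindent\emph{Main obstacle.}
The delicate point, and the reason for the restrictions $n=1$ and $\K=\Q$ in the statement, is the effectivisation of the cardinality bound on $\mathcal S_2$: one must bypass the non-effective Viada--Galateau input. For $n=1$ this is possible because the elliptic condition collapses to a torsion condition accessible by Silverman's specialisation and Northcott, while over $\Q$ Merel's theorem is available in the explicit form of Oesterl\'e and Parent; for $n\ge 2$ or a general number field, neither substitute is currently available in the strength required.
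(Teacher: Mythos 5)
There is a genuine gap in your proposal: the step that is supposed to deliver an effective bound on $\#\mathcal S_2$ is circular. You want to bound $[\Q(\alpha):\Q]$ via $\#\mathcal S_2$ (Step~2), and then feed this degree bound into Merel--Oesterl\'e--Parent to control the torsion order (Step~3). But to bound $\#\mathcal S_2$ you invoke ``an explicit Northcott argument,'' and Northcott finiteness requires \emph{both} a height bound and a degree bound. The height bound you do have (Silverman specialisation for the constant family gives $h(\alpha)\le H_0$ when $(\varrho_1(\alpha),\cdot)$ is torsion, and the BMZ Bounded Height Theorem gives the same from the single multiplicative relation among the $\varphi_i(\alpha)$). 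The degree bound you do not: there are torsion points of $E$ of unbounded degree with height $0$, so bounded height alone cannot rule out infinitely many $\alpha$. Moreover, Lemma~\ref{lem:eff_Maurin} is not applicable here: effective Maurin handles \emph{two} independent multiplicative relations in $\Gm^m$, whereas the set $\mathcal S_2$ imposes only one multiplicative relation on the torus side (the other condition lives on the $E$ side and is a torsion condition, not a second torus relation).

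The paper's proof uses an entirely different and essential ingredient, which is precisely the tool you would need to close the gap. It cites a specialised result, \cite[Theorem~1.4]{BS}, which bounds the order of the torsion point directly; the effectivity there hinges on an effective Lehmer-type lower bound for the height of non-root-of-unity elements of $\Q(E_{\tor})^*$, supplied by Frey~\cite{Frey17} (and by Amoroso--Zannier~\cite{AZ10} in the CM case). The mechanism is roughly: if $(\varrho_1(\alpha),\cdot)$ has order $N$, then $\alpha$ lies (up to a bounded extension determined by $\varrho_1$) in $\Q(E[N])\subseteq\Q(E_\tor)$; the multiplicative relation among the $\varphi_i(\alpha)$ then takes place inside $\Q(E_\tor)^*$, and the uniform Lehmer bound over this infinite extension, combined with the bounded heights, yields an explicit bound on $N$ and hence on the degree. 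This is the degree control your Steps~2--4 cannot reach via Northcott or effective Maurin, and it is also why the restriction to $\K=\Q$ appears: the Frey/Amoroso--Zannier bounds are available there. Once the order is bounded, the ``in particular'' clause follows exactly as in your opening paragraph; that part of your proposal is fine.
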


\begin{proof}
By \cite[Theorem 1.4]{BS}, there exists an effectively computable bound $B$ for the order of $(\varrho_1(\alpha), \cdot)$ depending on $E,\bphi, \rho_1$ and a lower bound $\epsilon$ for the height of elements of $\Q(E_\tor)^*\setminus \mu_\infty$. Such an effective lower bound is provided by Frey \cite[Theorem 1.2]{Frey17} and, in case $E$ has complex multiplication, by Amoroso and Zannier \cite{AZ10}.
\end{proof}

\section{Unlikely intersections in $ \Gm^m\times E^n$} \label{sect:UI}

To prove Theorem \ref{thm:EC}, we will need a finiteness result for multiplicative relations for rational functions in $\Qbar(X)$ and linear dependence on elliptic curves.
This will follow from  a general statement about the intersection of an irreducible curve in the split semiabelian variety $\Gm^m \times E^n$ with the algebraic subgroups of codimension at least $2$, which we prove in this section. This result fits in the more general framework of unlikely intersections, and it is a particular case of the well known Zilber-Pink conjecture (for an account on these problems, see \cite{Zannier}). 

\begin{theorem}\label{thm:UI}
	Let $E$ be an elliptic curve defined over a number field $\K$ by  a Weierstrass equation, and let $m, n \ge 1$ be integers. 
	Let $\cC$ be an irreducible curve in $ \Gm^m\times E^n$, also defined over $\K$, with coordinates $(Z_1, \dots , Z_m,X_1,Y_1,\dots ,X_n,Y_n )$ such that $Z_1, \dots , Z_m $ are multiplicatively independent and the points $(X_i,Y_i)$ are linearly independent over $\mathrm{End}(E)$. Suppose moreover that at least one of the following conditions holds:
	\begin{enumerate}
		\item $Z_1, \dots , Z_m $ are multiplicatively independent modulo constants;
		\item the points $(X_i,Y_i)$ are linearly independent over $\mathrm{End}(E)$ modulo points in $E(\overline{\Q})$.
	\end{enumerate}	
	Then, there are at most finitely many  points $c\in \cC(\C)$ such that $Z_i(c)$, $i=1,\ldots,m$, are multiplicatively dependent and $(X_j(c),Y_j(c))$, $j=1,\ldots,n$, are linearly dependent over $\mathrm{End}(E)$. 
\end{theorem}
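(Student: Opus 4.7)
The plan is to deduce Theorem~\ref{thm:UI} from the Zilber--Pink conjecture for curves in the split semiabelian variety $G = \Gm^m \times E^n$. The structural input is that, since $\mathrm{Hom}(\Gm, E) = \mathrm{Hom}(E, \Gm) = 0$, every connected algebraic subgroup of $G$ decomposes as a product $H_1 \times H_2$ with $H_1 \subseteq \Gm^m$ a subtorus and $H_2 \subseteq E^n$ an abelian subvariety, and a point $c \in \cC(\C)$ satisfying simultaneously a multiplicative relation on the $Z_i(c)$ and a linear relation over $\End(E)$ on the $(X_j(c), Y_j(c))$ lies in a translate of such an $H_1 \times H_2$ of total codimension at least $2$ in $G$. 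The task thus reduces to showing that $\cC$ has only finitely many such intersection points.

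First I would verify that $\cC$ is not contained in any proper algebraic subgroup of $G$: a containment in $H_1 \times E^n$ with $H_1 \subsetneq \Gm^m$ would contradict the multiplicative independence of $Z_1, \ldots, Z_m$, while one in $\Gm^m \times H_2$ with $H_2 \subsetneq E^n$ would contradict the linear independence of the $(X_j, Y_j)$ over $\End(E)$. Under the auxiliary hypotheses $(1)$ or $(2)$, the stronger ``modulo constants'' (respectively ``modulo points in $E(\overline{\Q})$'') condition provides the extra control needed to forbid containment in proper cosets of $G$, as required by the Zilber--Pink framework: $(1)$ eliminates any coset with a nontrivial torus direction, while $(2)$ eliminates any coset with a nontrivial elliptic direction. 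Combined with the base independence assumptions, each alternative in turn handles every type of proper coset of $G$.

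Having established weak transversality, I would invoke the Pila--Zannier strategy. Lifting $\cC$ to the universal cover $\C^{m+n}$ via the complex exponential on the torus factor and the elliptic exponential on the abelian factor turns the statement into a counting problem: a point $c$ lying in a codimension-$2$ algebraic subgroup of $G$ corresponds to two independent rational linear relations on its lift modulo the period lattice. A height upper bound on the coefficients of such relations at $c$ (of the type developed by Habegger for tori and by Galateau for abelian varieties), combined with a Pila--Wilkie counting theorem applied to the definable real-analytic lift of $\cC$ in a fundamental domain and a Galois-orbit lower bound on the height of $c$, then produces the finiteness conclusion.

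I expect the principal obstacle to lie in the genuinely mixed case $\mathrm{codim}(H_1) = \mathrm{codim}(H_2) = 1$: Maurin's theorem (Lemma~\ref{lem:eff_Maurin}) handles codimension-$2$ subgroups concentrated in the torus factor and Galateau's theorem (Lemma~\ref{lem:Gala}) those concentrated in the elliptic factor, but in the mixed regime neither result applies after projecting to a single factor, since each projection of $\cC$ typically meets infinitely many codimension-$1$ subgroups. Instead one must pair up the torus and elliptic relations occurring simultaneously at the point $c$, which forces running the o-minimal counting argument on the universal cover of the full semiabelian variety $G$ and using the split product structure together with the isotriviality of $E$ over $\K$ to combine the Habegger and Galateau height bounds into a single effective estimate on $G$.
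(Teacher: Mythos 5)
Your strategy---lifting $\cC$ to the universal cover of $\Gm^m\times E^n$, definable/o-minimal point counting in Betti coordinates, and comparison against the Galois orbit---is precisely the Pila--Zannier-style argument that the paper runs, and you correctly single out the mixed-codimension case as the reason Maurin's theorem and Galateau's theorem cannot be applied separately after projecting.

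However, your account of what conditions (1) and (2) are for is incorrect on two points. You claim that either alternative, together with the base independence hypotheses, forbids $\cC$ from lying in any proper coset of $G$ and that this is what the Zilber--Pink framework requires. Neither assertion holds: for a curve, the Zilber--Pink conjecture only asks that $\cC$ not lie in a proper algebraic subgroup (which the base independence hypotheses already guarantee), and condition (1) alone does not exclude all cosets, since $\cC$ could still lie in a translate of $\Gm^m\times H_2$ by a non-torsion constant via a relation $\sum_j b_j P_j = P_0$ with $P_0 \in E(\overline{\Q})$ non-torsion. In the paper, conditions (1) and (2) serve two other purposes. First and most crucially, they provide bounded height of the set $\cC_0$---via Bombieri--Masser--Zannier under (1) and Viada under (2)---and this bounded height is the essential input to the coefficient bound $\max\{|\ol{a}|,|\ol{b}|\}\ll [\K(c):\K]^{\gamma_2}$ (from Barroero--Capuano and Barroero--Capuano--Masser, not from Habegger or Galateau), without which the Pila--Wilkie comparison cannot even be set up. Second, they are invoked at the end to close the functional-transcendence step via Ax's theorem, which your sketch omits entirely: after Habegger--Pila produces a non-constant semi-algebraic definable path in a fiber, Ax forces the Zariski closure of its exponential image in $\Gm\times E$ to be a torsion coset, and it is conditions (1)/(2) that rule out this torsion coset being either a curve or a point. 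One smaller confusion worth fixing: the Galois-orbit step furnishes a lower bound on $[\K(c):\K]$ (the number of conjugates), not ``a lower bound on the height of $c$''; the height of $c$ is bounded from \emph{above}.
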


We point out that the Zilber-Pink conjecture for a curve in $ \Gm^m\times E^n$ predicts that the same conclusion of Theorem \ref{thm:UI} should hold without assuming the condition (1) or (2). In a work in progress \cite{BarKS}, the first author, K\"uhne and Schmidt prove the Zilber-Pink conjecture for a curve in a semiabelian variety over the algebraic numbers. As a special case, this would imply Theorem \ref{thm:UI}, and therefore Theorem \ref{thm:EC}, without these unnecessary hypotheses.
\medskip

The proof of Theorem \ref{thm:UI} can be obtained by adapting the proof of \cite[Theorem 1.2]{BC} to this setting. In particular, one follows the general strategy introduced by Pila and Zannier in \cite{PilaZannier} using the theory of o-minimal structures to give an alternative proof of the Manin-Mumford conjecture for abelian varieties. The strategy is based on the combination of various results coming from o-minimality, Diophantine geometry and transcendence results.

An important ingredient of the proof is the well-known Pila-Wilkie Theorem \cite{PilaWilkie} which provides an estimate for the number of rational points on a ``sufficiently transcendental'' real subanalytic variety. Using abelian logarithms, these rational points correspond to torsion points. For more details about the general strategy and how it has been applied to other problems we refer to \cite{Zannier}.

On the other hand, if one wants to deal with points lying in proper algebraic subgroups like in Theorem \ref{thm:UI}, a more refined result is needed. For instance, first in \cite{BC2016} and then in \cite{BC}, the authors adapted ideas introduced in \cite{CMPZ} to deal with linear relations rather than just with torsion points.

\medskip

Let $\mathfrak{h}$  denote the absolute logarithmic Weil heights on $\Gm(\Qbar)$ and on $E(\Qbar)$, 
and let us define a height $\tilde{\mathfrak{h}}$ on $\cC(\Qbar)$ by
$$
\tilde{\mathfrak{h}}(c):=\mathfrak{h}(Z_1(c))+\dots + \mathfrak{h}(Z_m(c))+\mathfrak{h}(X_1(c),Y_1(c))+\dots + \mathfrak{h}(X_n(c),Y_n(c)).
$$

We call $\cC_0$ the set of such points of $\cC(\C)$ that we want to prove to be finite in Theorem \ref{thm:UI}.
First, we note that the points in $\cC_0$ must be algebraic. Moreover, as at least one of the conditions (1) and (2) in Theorem~\ref{thm:UI} holds, $\cC_0$ is a set of bounded height respectively by  
\begin{enumerate}   
	\item \cite[Theorem 1]{BMZ};
	\item \cite[Theorem 1]{Viada2003}.
\end{enumerate}
Indeed, if $V_1$ and $V_2$ are any two coordinates of $\cC$, then there exists a polynomial $f\in \K[T_1,T_2]\setminus \{0\}$ such that $f(V_1,V_2)=0$. Suppose $V_1$ is non-constant and $\mathfrak{h}(V_1(c))\leq B$ for some $c\in \cC(\Qbar)$ and $B\geq 0$; then, because $f(V_1(c),V_2(c))=0$, we get $\mathfrak{h}(V_2(c))\ll B$ and the implied constant depends only on $f$.

We then just have to exhibit a bound on their degree over the number field $\K$.

\begin{lemma} \label{lem:Galois}
	There exists a compact $($in the complex topology$)$ subset $\cC^*$ of $\cC$, such that for all $c\in \cC_0$ of degree large enough, 
	at least half of the Galois conjugates of $c$ over $\K$ lie in $\cC^*$
\end{lemma}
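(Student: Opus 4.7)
The plan is to exploit the boundedness of height of $\cC_0$ recalled just before the statement. Since the elliptic factor $E(\C)^n$ is already compact in the complex topology, the only way for a sequence in $\cC$ to run out to the ``boundary'' of $\Gm^m\times E^n$ is for some archimedean absolute value $|Z_i|$ to tend to $0$ or to $\infty$. I would therefore define
\[
\cC^* \;=\; \bigl\{\,c\in\cC(\C)\;:\; M^{-1}\le |Z_i(c)| \le M,\ i=1,\dots,m\,\bigr\}
\]
for a single parameter $M>1$ to be chosen depending only on $\cC,E,\K$. This is a closed subset of the compact region $\{M^{-1}\le |Z_i|\le M\}\times E^n$ of $\Gm^m\times E^n$, hence compact in the complex topology.

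First I would record the height bound: under hypothesis (1) of Theorem~\ref{thm:UI} (together with \cite[Theorem 1]{BMZ}) or hypothesis (2) (together with \cite[Theorem 1]{Viada2003}), and using the pairwise argument sketched right before the lemma, there is a constant $B$ depending only on $\cC,E,\K$ such that $\mathfrak{h}(Z_i(c))\le B$ for every $c\in\cC_0$ and every $i$. The core of the proof is then the standard ``spread of conjugates'' estimate. Fix $c\in\cC_0$, set $d=[\K(c):\K]$, and let $\Sigma$ be the set of $\K$-embeddings $\sigma:\K(c)\hookrightarrow\C$, so $|\Sigma|=d$. Keeping only the archimedean contributions in the Weil-height formula gives, for each $i$,
\[
[\K:\Q]\,d\,\mathfrak{h}(Z_i(c)) \;=\; [\K(c):\Q]\,\mathfrak{h}(Z_i(c)) \;\ge\; \sum_{\sigma\in\Sigma}\log^+|\sigma(Z_i(c))|,
\]
whence
\[
\#\{\sigma\in\Sigma \,:\, |\sigma(Z_i(c))|>M\} \;\le\; \frac{[\K:\Q]\,B\,d}{\log M},
\]
and the same bound holds for the tail $|\sigma(Z_i(c))|<M^{-1}$ by applying the estimate to $Z_i(c)^{-1}$, which has the same Weil height.

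Summing these bounds over $i=1,\dots,m$ and over the two tails, the number of Galois conjugates of $c$ that fail to lie in $\cC^*$ is at most $(2m[\K:\Q]B/\log M)\cdot d$. Choosing $M$ once and for all so that $\log M > 4m[\K:\Q]B$ makes this quantity at most $d/2$, and the hypothesis that the degree be sufficiently large absorbs any rounding issue when $d$ is very small. I do not expect a serious obstacle here: the only delicate point is bookkeeping, namely matching the absolute logarithmic Weil height (normalised over $\Q$) with the sum of archimedean contributions over $\K$-embeddings, and checking that $M$, and hence $\cC^*$, depends only on $\cC,E,\K$ and not on the particular $c$. The elliptic coordinates $(X_j,Y_j)$ play no role in the compactness discussion, since $E(\C)$ is already compact, so the whole problem genuinely reduces to the $\Gm^m$-direction.
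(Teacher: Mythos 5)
Your proof is correct and follows the same spread-of-conjugates-via-bounded-height argument underlying the cited reference \cite[Lemma 8.2]{MZ14a}, which the paper invokes without giving details. Exploiting the compactness of $E(\C)^n$ to reduce the matter to the $\Gm^m$-direction, choosing the threshold $M$ once and for all from the height bound $B$, passing from $\Q$-embeddings to the subset of $\K$-embeddings, and handling the lower tail via $Z_i^{-1}$ (which has the same Weil height) is exactly the expected route, and your bookkeeping is sound.
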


\begin{proof}
	See \cite[Lemma 8.2]{MZ14a}.
\end{proof}

Note that, if $c\in \cC_0$, then all its Galois conjugates over $\K$ satisfy again some dependence relations, hence they must also lie in $\cC_0$.

We now cover the set $\cC^*$ appearing in Lemma~\ref{lem:Galois} with finitely many locally contractible compact subsets of $E^n$ which we call $D_1, \dots , D_{\gamma_1}$.

Let $D$ be one of these sets. We set $R=\mathrm{End}(E)$ and $P_j=(X_j,Y_j)$. For $\ol{a}=(a_1,\dots , a_m )\in \Z^m \setminus \{0\}$ and $\ol{b}=(b_1,\dots , b_n ) \in R^n \setminus \{0\}$ we set
\[ 
D(\ol{a},\ol{b}):=\left \{c\in D: ~ \prod_{i=1}^m Z_i(c)^{a_i}=1 \mbox{ and } \sum_{j=1}^n b_jP_j(c)=O 
\right \}.
\]

For the rest of the section the implied constants will depend on $\cC$ and $D$. Any further dependence will be expressed by an index.

\begin{lemma}
	If $c\in D \cap \cC_0$, there are $\ol{a} \in \Z^m \setminus \{0\}$ and $\ol{b}\in R^n \setminus \{0\}$ such that $c \in D(\ol{a},\ol{b})$ and
	\begin{equation}   \label{eq:abc}
	\max \{|\ol{a}|,|\ol{b}| \} \ll [\K(c):\K]^{\gamma_2},
	\end{equation}
	for some constant $\gamma_2>0$ depending on the curve $\cC$ and the set $D$, where $|\ol{a}| = \max \{|a_1| , \dots , |a_m|\}$ and $|\ol{b}| = \max \{|b_1| , \dots , |b_n|\}$.
\end{lemma}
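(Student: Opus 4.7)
The plan is to construct the pair $(\ol a,\ol b)$ by producing a short multiplicative relation on the coordinates $Z_1(c),\ldots,Z_m(c)$ and a short $R$-linear relation on the points $P_1(c),\ldots,P_n(c)$ separately, then taking the two pieces together. The essential input is that $\cC_0$ is a set of bounded height, as noted just before Lemma~\ref{lem:Galois}: under hypothesis~$(1)$ this is \cite[Theorem~1]{BMZ}, and under hypothesis~$(2)$ it is \cite[Theorem~1]{Viada2003}. Consequently for every $c \in \cC_0$ the Weil heights of the $Z_i(c)$ and the canonical heights of the $P_j(c)$ are uniformly bounded by a constant $H_\cC$ depending only on $\cC$, while all coordinates lie in the degree-$d$ number field $\K(c)$, where $d := [\K(c):\K]$.

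For the multiplicative part, $Z_1(c),\ldots,Z_m(c)$ are multiplicatively dependent in $\K(c)^*$ by the definition of $\cC_0$. I would apply Minkowski's theorem on successive minima to the $\Z$-lattice $L_\mu \subset \Z^m$ of multiplicative relations; its covolume is controlled by the height bound $H_\cC$ together with a polynomial-in-$d$ bound on the order of the group of roots of unity in $\K(c)$, and this yields a non-zero $\ol a \in \Z^m$ with $\prod_i Z_i(c)^{a_i} = 1$ and $|\ol a| \ll d^{\gamma'}$ for some constant $\gamma'$ depending only on $\cC$. This is a classical small-relation bound in the style of Masser. Analogously, $P_1(c),\ldots,P_n(c)$ are $R$-linearly dependent; applying Minkowski to the $R$-lattice $L_\lambda \subset R^n$ of linear relations, combined with Merel's polynomial-in-$d$ bound on $|E(\K(c))_{\mathrm{tors}}|$ and the canonical-height bound on the $P_j(c)$, produces a non-zero $\ol b \in R^n$ with $\sum_j b_j P_j(c) = O$ and $|\ol b| \ll d^{\gamma''}$. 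Setting $\gamma_2 := \max\{\gamma',\gamma''\}$, the pair $(\ol a,\ol b)$ satisfies $c \in D(\ol a,\ol b)$ by construction and $\max\{|\ol a|,|\ol b|\} \ll d^{\gamma_2}$, as required.

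The main obstacle is making the covolume estimates for both lattices polynomial (rather than exponential) in $d$, uniformly in $c \in \cC_0$, and with exponent depending only on the curve $\cC$ and the set $D$. The multiplicative case is essentially classical; in the elliptic case one must control the Mordell--Weil regulator of the finitely generated subgroup $\langle P_1(c),\ldots,P_n(c)\rangle \subset E(\K(c))$ polynomially in $d$ (which ultimately follows from the uniform canonical-height bound on the $P_j(c)$ together with a Minkowski-type argument on the $\Z$-span), and in the CM case additionally choose a norm on $R^n$ compatible with the sup-norm $|\cdot|$ appearing in the statement.
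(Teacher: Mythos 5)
Your strategy---use the bounded-height property of $\cC_0$ and then a geometry-of-numbers argument to produce a short relation in each factor separately---is the same one underlying the lemmas the paper cites here, namely \cite[Lemmas~5.1 and~5.2]{BC} and, in the CM case, \cite[Lemma~6.1]{BCM}. So the skeleton of your proposal is sound. Two of the specific ingredients you invoke, however, are wrong or insufficient.

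First, Merel's theorem is not the right tool for the torsion bound. It bounds $|E(\L)_{\tor}|$ uniformly over \emph{all} elliptic curves with $[\L:\Q]\le d$, but the explicit bounds (Merel, Oesterl\'e, Parent) grow exponentially, not polynomially, in $d$. What you actually need is the bound for the \emph{fixed} curve $E$: if $P\in E(\ov\Q)$ has exact order $N$ and $[\K(P):\K]\le d$, then $N\ll_E d^{\gamma}$ for some $\gamma$. This follows from the size of the image of the mod-$N$ Galois representation attached to $E$ (Serre's open image theorem in the non-CM case, and David- or Silverberg-type estimates in the CM case), not from Merel.

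Second, and more seriously, the covolume you must bound to run Minkowski is that of the kernel lattice $\Lambda=\ker\bigl(\Z^n\to\langle P_1(c),\dots,P_n(c)\rangle\bigr)$, and this is \emph{not} controlled by the uniform upper bound on the canonical heights $\hat{\mathfrak h}(P_j(c))$. Already for $n=2$ and rank one: if $P_1(c)\equiv aQ$ and $P_2(c)\equiv bQ$ modulo torsion with $Q$ a generator, the shortest relation has length $\asymp\sqrt{a^2+b^2}\asymp\sqrt{\bigl(\hat{\mathfrak h}(P_1(c))+\hat{\mathfrak h}(P_2(c))\bigr)/\hat{\mathfrak h}(Q)}$, so one needs a \emph{lower} bound on the canonical height of a non-torsion point of $E$ over a degree-$d$ field. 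The essential unconditional input is Masser's estimate $\hat{\mathfrak h}(P)\gg_E d^{-3}(\log 2d)^{-2}$ for non-torsion $P\in E(\L)$ with $[\L:\Q]\le d$; the corresponding role in the multiplicative factor is played by Dobrowolski's theorem. Without these Lehmer-type lower bounds your Minkowski step does not close: the uniform upper bound on $\hat{\mathfrak h}(P_j(c))$ controls the regulator from \emph{above}, which is the wrong direction. Those lower bounds, together with the fixed-curve Galois torsion bounds above, are precisely what power the cited lemmas of \cite{BC} and \cite{BCM}.
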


\begin{proof}
	See \cite[Lemmas 5.1 and 5.2]{BC} and (if $E$ has CM) \cite[Lemma 6.1]{BCM}.
\end{proof}

We denote by $u_1,\dots, u_m$ the principal determinations of the standard logarithms  of $Z_1, \dots , Z_m$ and by $w_1, \dots , w_n$  the elliptic logarithms  of $P_1,\dots, P_n$ seen as analytic functions on (an open neighbourhood of) $D$. 
These functions satisfy the equations   
$$
u_i=p_i+2\pi \sqrt{-1} q_i ,  \quad   w_j=r_j +   s_j \tau, \ \  \mbox{for $i=1,\dots , m$ and $j=1,\dots , n$,}
$$
where $(1,\tau)$ is a basis of the period lattice of $E$ and $p_i, q_i, r_j, s_j$ are real-valued functions defined on $D$. If we view the compact set $D$ as subset of $\R^2$, we can define 
$$
\begin{array}{crcl}
\theta :& D\subset \R^2 & \rightarrow & \R^{2m+2n} \\
& c &\mapsto & (p_1(c), q_1(c),\dots, p_m(c), q_m(c), r_1(c), s_1(c),\dots , r_n(c),s_n(c)).
\end{array}
$$
The image $\theta(D)$ is a subanalytic surface of $\R^{2m + 2n}$ which we denote by $S$. This is a definable set in the o-minimal structure $\R_{\mathrm{an}}$. In this section definable means definable in $\R_{\mathrm{an}}$. Note that $\theta$ is injective. Moreover, as $D$ is compact, we have that the functions $q_i,r_j$ and $s_j$ take bounded values. 
The $p_1, q_1,\dots, p_m,q_m, r_1, s_1,\dots , r_n,s_n$ are sometimes called Betti-coordinates and $\theta$ the Betti-map.
\medskip

For any $b\in R$ and any point $P\in E(\Qbar)$, given $\rho+\sigma \tau$ an elliptic logarithm of $P$, 
 a logarithm of $bP$ is given by $\rho'+\sigma' \tau$, where 
$$
\left(\begin{array}{c}
\rho' \\ \sigma'
\end{array}  \right)=A(b)\left(\begin{array}{c}
\rho \\ \sigma
\end{array}  \right),
$$
for some $A(b)\in M_2(\Z)$, where $M_2(\Z)$ is the ring of $2 \times 2$ matrices over $\Z$. 

For $l \in\Z$, clearly
$$
A(l)=\left(\begin{array}{cc}
l& 0 \\ 0 &l
\end{array}  \right).
$$
If $R=\Z[\alpha]$ for some imaginary quadratic $\alpha$, we have that 
$$
A(l_1+\alpha l_2)=\left(\begin{array}{cc}
l_1& 0 \\ 0 &l_1
\end{array}  \right)+A(\alpha)\left(\begin{array}{cc}
l_2& 0 \\ 0 &l_2
\end{array}  \right).
$$

Note that, as the entries of $A(\alpha)$ are fixed and depend only on $E$, if $\abs{A}$ is the maximum of the absolute values of the entries of a matrix $A \in M_2(\Z)$, then we have $\abs{A(b)}\ll \abs{b}$ for all $b\in R$.

Using the function $\theta$ defined before, the points of $\cC_0$ that satisfy two relations will correspond to points of $S$ lying on linear varieties defined by equations of some special form with integer coefficients.
In particular, if  $c\in D(\ol{a},\ol{b})$, there are integers $e,f,g$ such that
$$
\begin{cases}
\sum_{i=1}^m a_i u_i=2\pi \sqrt{-1} e,  \\
\sum_{j=1}^n b_jw_j=f+ g\tau, 
\end{cases}
$$
which translates to
$$
\begin{cases}
\sum_{i=1}^m a_ip_i=0, \\
\sum_{i=1}^m a_iq_i=e,\\
\sum_{j=1}^n A(b_j) (r_j,s_j)^{\mathrm{t}}=(f,g)^{\mathrm{t}}, 
\end{cases}
$$
holding for $\theta(c)$ (here $\cdot ^{\mathrm{t}} $ denotes the transposition).

We define
\begin{multline*}
W=\left\lbrace(\alpha_1,\dots, \alpha_m,B_1, \dots , B_n, \sigma_1, \sigma_2, \sigma_3, p_1, q_1,\dots, p_m,q_m, r_1, s_1,\dots , r_n,s_n)\in \vphantom{\sum_{i=1}^m }  \right.  \\ \left. \R^m \times M_2(\R)^n\times \R^3\times S: \sum_{i=1}^m \alpha_i p_i=0  ,\sum_{i=1}^m \alpha_i q_i=\sigma_1 ,  \sum_{j=1}^n B_j(r_j,s_j)^{\mathrm{t}}=(\sigma_2,\sigma_3)^{\mathrm{t}}\right\rbrace.
\end{multline*}
This is a definable family of subsets of $\R^3\times S$ with parameter space $\R^m \times M_2(\R)^n$. For $\boldsymbol{\alpha}=(\alpha_1,\dots, \alpha_m )\in \R^m$ and $\ol{B}=(B_1, \dots , B_n)\in M_2(\R)^n$, we let
\begin{multline*}
W_{\boldsymbol{\alpha},\ol{B}}=\{ (\sigma_1, \sigma_2, \sigma_3, p_1, q_1,\dots, p_m,q_m, r_1, s_1,\dots , r_n,s_n )\in  \R^3\times S:  \\ (\alpha_1,\dots, \alpha_m,B_1, \dots , B_n, \sigma_1, \sigma_2, \sigma_3, p_1, \dots ,s_n ) \in W \}
\end{multline*}
be the fiber of $W$ above $(\boldsymbol{\alpha},\ol{B})$. Moreover, we let $\pi_1$ be the projection from $\R^3\times S \subseteq \R^3 \times \R^{2m+2n}$ to $\R^3$, while $\pi_2$ indicates the projection to $S$. We also define, for $T\geq 0$,
\begin{multline*}
W_{\boldsymbol{\alpha},\ol{B}}^\sim (\Q, T)=  \{ (\sigma_1, \sigma_2, \sigma_3, p_1, q_1,\dots, p_m,q_m, r_1, s_1,\dots , r_n,s_n   ) \in W_{\ol{\boldsymbol{\alpha}},\ol{B}}:  \\ (\sigma_1,\sigma_2,\sigma_3) \in \Q^3 \text{ and } H(\sigma_1,\sigma_2,\sigma_3)\leq T \},
\end{multline*}
where $H(\sigma_1,\sigma_2,\sigma_3)$ is the maximum of the absolute values of the numerators and denominators of the $\sigma_j$ when they are written in lowest terms.

Fix now $\ol{a}\in \Z^m$ and $\ol{b}\in R^n$.
Note that, if $c\in D(\ol{a}, \ol{b})$, then by the above discussion there are integers $e,f,g$ such that $(e,f,g, \theta (c))\in W_{\ol{a},A(\ol{b})}$, where $A(\ol{b})=(A(b_1),\dots, A(b_n))$. Since $  q_1,\dots,q_m, r_1, s_1,\dots ,r_n, s_n $ take bounded values as $D$ is compact, we can suppose that 
$$
\max \{|e|,|f|,|g|,|\ol{a}|,|A(\ol{b})| \}\leq T_0,
$$ 
for some $T_0$ with $T_0 \ll \max \{|\ol{a}|,|A(\ol{b})| \}\ll \max \{|\ol{a}|,|\ol{b}| \}$. 
Therefore, if we let 
$$
\Sigma_{\ol{a},\ol{b}} :=\pi_2^{-1} (\theta(D(\ol{a},\ol{b})))\cap W_{\ol{a},A(\ol{b})} ,
$$
then we have $\Sigma_{\ol{a},\ol{b}}  \subseteq   W_{\ol{a},A(\ol{b})} ^\sim (\Q, T_0) $. Note that $\theta(D(\ol{a},\ol{b}))\subseteq \pi_2(W_{\ol{a},A(\ol{b})})$.

We claim that, for every $\epsilon>0$, we have an upper bound for the cardinality of $D(\ol{a},\ol{b})$ of the form 
\begin{equation}   \label{eq:ab}
|D(\ol{a},\ol{b})|\ll_\epsilon (\max \{|\ol{a}|,|A(\ol{b})| \})^\epsilon.
\end{equation}
If not, by the previous considerations the following lemma would be contradicted.

\begin{lemma}
	For every $\epsilon>0$ we have $|\pi_2(\Sigma_{\ol{a},\ol{b}})|\ll_\epsilon T_0^\epsilon$. 
\end{lemma}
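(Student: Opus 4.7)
The plan is to apply Pila's counting theorem for definable families in the o-minimal structure $\R_{\mathrm{an}}$ to the family $W$ with parameter space $\R^m \times M_2(\R)^n$. This theorem supplies, for every $\epsilon > 0$, a constant $c_\epsilon > 0$ (depending only on $W$ and $\epsilon$) such that for every parameter $(\boldsymbol{\alpha},\ol{B})$ and every $T \geq 1$, the points of the fiber $W_{\boldsymbol{\alpha},\ol{B}}$ whose $(\sigma_1,\sigma_2,\sigma_3)$-coordinates lie in $\Q^3$ with height at most $T$, and which do not lie on any connected positive-dimensional semi-algebraic subset of $W_{\boldsymbol{\alpha},\ol{B}}$, number at most $c_\epsilon T^\epsilon$.

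Next I would observe that, since $\theta$ is injective on $D$ and the integers $e,f,g$ attached to $c \in D(\ol{a},\ol{b})$ are uniquely determined by $c$ and by $(\ol{a},A(\ol{b}))$, the projection $\pi_2$ furnishes a bijection between $\Sigma_{\ol{a},\ol{b}}$ and $\theta(D(\ol{a},\ol{b}))$, and every element of $\Sigma_{\ol{a},\ol{b}}$ is one of the rational points of $W_{\ol{a},A(\ol{b})}^{\sim}(\Q, T_0)$. Consequently the ``transcendental'' part of $\pi_2(\Sigma_{\ol{a},\ol{b}})$, namely the part whose lifts do not lie on any positive-dimensional semi-algebraic subset of the fiber, already contributes at most $c_\epsilon T_0^\epsilon$ elements, which is exactly the bound sought.

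The main difficulty, and the step I expect to be the principal obstacle, is to show that the ``algebraic'' contribution coming from positive-dimensional connected semi-algebraic subsets $V \subseteq W_{\ol{a},A(\ol{b})}$ is bounded by a constant depending only on $\cC$ and $D$. If $\pi_2(V)$ is a single point of $S$, then $V$ adds at most one element to $\pi_2(\Sigma_{\ol{a},\ol{b}})$. If instead $\pi_2(V)$ is positive-dimensional, then pulling it back through the injective Betti map $\theta$ produces a real-analytic arc in $D \subseteq \cC$ along which both the multiplicative relation $\prod_{i=1}^m Z_i^{a_i} = 1$ and the linear relation $\sum_{j=1}^n b_j(X_j,Y_j) = O$ would hold identically, and hence, by analyticity and the irreducibility of $\cC$, on all of $\cC$. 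Invoking the Ax--Lindemann--Weierstrass theorem for the semiabelian variety $\Gm^m \times E^n$ (a consequence of Ax's classical theorems for tori and for abelian varieties), this forces $\cC$ to be contained in a proper algebraic subgroup (or a translate thereof) of $\Gm^m \times E^n$, which contradicts the standing hypothesis that $Z_1,\ldots,Z_m$ are multiplicatively independent (respectively modulo constants, under condition~(1)) on $\cC$ together with the linear independence of the $(X_j,Y_j)$ over $\End(E)$ (respectively modulo $E(\overline{\Q})$-points, under condition~(2)) on $\cC$. Hence no such $V$ can occur, and combining this with Pila's bound yields $|\pi_2(\Sigma_{\ol{a},\ol{b}})| \ll_\epsilon T_0^\epsilon$, uniformly in $(\ol{a},\ol{b})$ as claimed.
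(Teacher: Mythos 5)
Your proposal identifies the right circle of ideas (o-minimal point counting plus an Ax-type transcendence result), but there are two substantive gaps, and the paper's proof is organized precisely to close them.

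First, your treatment of the algebraic part is incomplete. The basic Pila--Wilkie statement bounds only the transcendental rational points; the positive-dimensional connected semi-algebraic subsets of $W_{\ol{a},A(\ol{b})}$ with zero-dimensional $\pi_2$-image could, a priori, be very numerous, and each such block contributes a point to $\pi_2(\Sigma_{\ol{a},\ol{b}})$. Nothing in your argument controls how many such blocks meet $\Sigma_{\ol{a},\ol{b}}$. The paper instead assumes $|\pi_2(\Sigma_{\ol{a},\ol{b}})|\ge \gamma_3 T_0^\epsilon$ and invokes the ``semi-rational'' refinement of Pila--Wilkie, namely \cite[Corollary 7.2]{HabPila16}, which directly produces a definable arc $\delta:[0,1]\to W_{\ol{a},A(\ol{b})}$ whose $\pi_1$-projection is semi-algebraic and whose $\pi_2$-projection is \emph{non-constant}. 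This non-constancy of $\pi_2\circ\delta$ is the crucial output, and it does not follow from the plain block-counting theorem.

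Second, and more seriously, your claim that along a positive-dimensional block $V\subseteq W_{\ol{a},A(\ol{b})}$ the exact relations $\prod Z_i^{a_i}=1$ and $\sum b_j(X_j,Y_j)=O$ hold identically is false. The coordinates $\sigma_1,\sigma_2,\sigma_3$ range over $\R$ as one moves along $V$; they equal integers only at the specific points of $\Sigma_{\ol{a},\ol{b}}$, not along the whole arc. Hence you only know that $u'=\sum a_iu_i$ and $w'=\sum b_jw_j$ are (together with $\sigma_1,\sigma_2,\sigma_3$) semi-algebraically constrained and of transcendence degree at most one along the arc --- you do \emph{not} know that $u'$ and $w'$ vanish modulo periods. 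So an Ax--Lindemann argument directly on $\Gm^m\times E^n$ does not apply in the way you describe. The paper instead pushes forward by the homomorphism $\phi:(Z_1,\dots,Z_m,P_1,\dots,P_n)\mapsto(\prod Z_i^{a_i},\sum b_jP_j)$ to $\Gm\times E$, uses the transcendence-degree bound $\trdeg_\C\,\C(\sigma_1,\sigma_2,\sigma_3,u',w')\le 1$ to see that $\delta'=(u',w')$ is a semi-algebraic arc, and then applies Ax's theorem to conclude that the Zariski closure of $\exp\circ\delta'$ inside $\phi(\cC)$ is a coset, torsion because $\phi(c_0)$ is the identity. Both cases (the coset is a curve, whence equals $\phi(\cC)$; or the coset is a point, whence $u',w'$ are constant while $\delta_2$ is non-constant) then contradict the hypotheses of Theorem~\ref{thm:UI}. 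You would need to recast your argument along these lines.
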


\begin{proof}
	Suppose there is a positive constant $\gamma_3=\gamma_3(W,\epsilon)$ such that $|\pi_2(\Sigma_{\ol{a},\ol{b}})|\geq \gamma_3 T_0^\epsilon$. Then, by \cite[Corollary 7.2]{HabPila16}, there exists a continuous and definable function $\delta:[0,1]\rightarrow W_{\ol{a},A(\ol{b})}$ such that
	\begin{enumerate}
		\item the map $\delta_1:=\pi_1 \circ \delta :[0,1]\rightarrow \R^3$ is semi-algebraic and its restriction to $(0,1)$ is real analytic;
		\item the composition $\delta_2:=\pi_2 \circ \delta :[0,1]\rightarrow S$ is non-constant;
		\item we have $\pi_2(\delta(0)) \in \pi_2(\Sigma_{\ol{a},\ol{b}})$.
	\end{enumerate}
	
	By rescaling and restricting the domain we can suppose that the path $\delta_1 $ is contained in a real algebraic curve. Moreover, by (3) above, there exists $c_0 \in D(\ol{a},\ol{b})$ with $\theta (c_0)=\delta_2(0)$.
	
	We now consider the map
	$$
	\begin{array}{clcl}
	\phi :& \Gm^m\times E^n & \rightarrow & \Gm \times E \\
	& (Z_1, \dots, Z_m,P_1, \dots , P_n) &\mapsto & (\prod_{i=1}^mZ_i^{a_i},\prod_{j=1}^n b_j P_j)
	\end{array}
	$$
	and its differential
	$$
	\begin{array}{clcl}
	d \phi :& \C^m\times \C^n & \rightarrow & \C\times\C \\
	& (u_1,\dots, u_m,w_1, \dots , w_n) &\mapsto & (\sum_{i=1}^m a_iu_i,\sum_{j=1}^n b_j w_j)=:(u',w'). 
	\end{array}
	$$
	Note that $\phi(\cC)$ cannot be constant, otherwise both conditions (1) and (2) in the hypotheses of Theorem \ref{thm:UI} would be false. Therefore  $\phi(\cC)$ is a curve.
	
	We can see $\sigma_1,\sigma_2, \sigma_3,  p_1, q_1,\dots, p_m,q_m, r_1, s_1,\dots , r_n,s_n$, and consequently $u_1,\dots, u_m$, $w_1, \dots , w_n,u', w'$, as coordinate functions on $[0,1]$. 
	We have that the transcendence degree trdeg$_\C \, \C(\sigma_1,\sigma_2,\sigma_3)\leq 1$ and recall that, by the definition of $W$, the two relations $u'=2\pi \sqrt{-1} \sigma_1$ and $w'= \sigma_2 +\sigma_3\tau$ must hold. We deduce that $$\trdeg_\C \, \C(\sigma_1,\sigma_2,\sigma_3,u',w')\leq 1.$$
	This gives a map $$\delta':=(u',w'): [0,1]\rightarrow \C \times \C$$ that is real semi-algebraic, continuous and with $\delta'  |_{(0,1)}$ real analytic. By Ax's Theorem \cite{Ax72} (see \cite[Theorem 5.4]{HabPila16}), the Zariski closure in $\Gm\times E$ of the image of $\exp \circ \delta'$, which is contained in $\phi(\cC)$, is a coset, that must actually be a torsion coset, because $\phi (c_0)$ is the neutral element of $\Gm\times E$. If this torsion coset is a curve, then it coincides with $\phi(\cC)$ and this contradicts the hypotheses of Theorem~\ref{thm:UI}. If the coset is a point, then $u'=\sum_{i=1}^m a_iu_i$ and $w'=\sum_{j=1}^n b_j w_j$ are both constant and equal to $d\phi(c_0)$ on $[0,1]$. This again contradicts the hypotheses of Theorem~\ref{thm:UI}.
\end{proof}

Now we are ready to prove Theorem~\ref{thm:UI}.

\begin{proof}[Proof of Theorem~\ref{thm:UI}]
Fix a $c_0\in \cC_0$ of large degree over $\K$. By Lemma~\ref{lem:Galois}  we have that one of the sets $D_1,\dots, D_{\gamma_1}$, say $D_1$, 
contains at least $ [\K(c_0):\K]/(2\gamma_1)$ conjugates of $c_0$. Moreover, if $c_0\in D_1(\ol{a},\ol{b})$ for some $\ol{a}\in \Z^m$ and $\ol{b}\in R^n$, all of these conjugates belong to $D_1(\ol{a},\ol{b})$. Therefore, combining this with \eqref{eq:abc} and \eqref{eq:ab}, we get
$$
[\K(c_0):\K] \ll |D_1(\ol{a},\ol{b})|\ll_\epsilon (\max \{|\ol{a}|,|\ol{b}|\})^\epsilon \ll_{\epsilon} [\K(c_0):\K]^{\gamma_2 \epsilon}, 
$$
which, after choosing $\epsilon < 1/(2\gamma_2)  $, leads to a contradiction if $[\K(c_0):\K]$ is too large.
This completes the proof of Theorem~\ref{thm:UI}. 
\end{proof}

We now formulate and prove a corollary of Theorem~\ref{thm:UI}. Notice that Lemma \ref{lem:new2.7} is a special case of it.

The point of the corollary is that, if one only needs to consider relations over $\Z$ among the $P_j(c)$, one can relax the hypotheses and assume that the $P_j$ are linearly independent over $\Z$ and not over $\mathrm{End}(E)$. 
Note that the analogous fact does not hold in the setting of Lemma \ref{lem:Gala}. Indeed, two points that are generically dependent over $\mathrm{End}(E)$ but not over $\Z$ can specialize infinitely many times to two torsion points.

\begin{corollary}\label{cor:UI}
		Let $E$ be an elliptic curve defined over a number field $\K$ by  a Weierstrass equation, and let $m, n \ge 1$ be integers. 
	Let $\cC$ be an irreducible curve in $ \Gm^m\times E^n$, also defined over $\K$, with coordinates $(Z_1, \dots , Z_m,X_1,Y_1,\dots ,X_n,Y_n )$ such that $Z_1, \dots , Z_m $ are multiplicatively independent and the points $P_j:=(X_j,Y_j)$ are linearly independent over $\Z$. Suppose moreover that at least one of the following conditions holds:
	\begin{enumerate}
		\item $Z_1, \dots , Z_m $ are multiplicatively independent modulo constants;
		\item the points $P_j$ are linearly independent over $\mathrm{End}(E)$ modulo points in $E(\overline{\Q})$.
	\end{enumerate}	
	Then, there are at most finitely many  points $c\in \cC(\C)$ such that the $Z_i(c)$ are multiplicatively dependent and the $P_j(c)$ are linearly dependent over $\Z$. 
\end{corollary}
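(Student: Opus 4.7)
The plan is to reduce Corollary~\ref{cor:UI} to Theorem~\ref{thm:UI} by projecting $\cC$ onto the coordinates that remain generically $\mathrm{End}(E)$-independent. Set $R = \mathrm{End}(E)$; if $E$ has no complex multiplication then $R = \Z$ and the two statements coincide, so from now on assume $R$ is an order in an imaginary quadratic field with fraction field $K = R\otimes_\Z\Q$.

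Let $r$ be the $R$-rank of $P_1,\dots,P_n$ in $E(\overline{\Q(X)})$. After reordering, I may assume that $P_1,\dots,P_r$ are $R$-linearly independent; then for each $i = r+1,\dots,n$ there exist $a_i \in R\setminus\{0\}$ and $b_{i1},\dots,b_{ir} \in R$ such that
\[
a_i P_i = \sum_{j=1}^r b_{ij} P_j
\]
holds as an identity on $\cC$. Let $\pi : \Gm^m \times E^n \to \Gm^m \times E^r$ be the projection that forgets the last $n-r$ elliptic factors, and set $\cC' = \pi(\cC)$. The image is an irreducible curve: if it were a point, then $\cC$ would lie in a fibre of $\pi$, forcing each $P_i$ with $i>r$ into the finite set of $a_i$-division points of a fixed element, contradicting $\dim\cC = 1$. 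The hypotheses of Theorem~\ref{thm:UI} hold for $\cC'$: the $Z_i$ remain multiplicatively independent, $P_1,\dots,P_r$ are $R$-linearly independent by construction, and conditions (1) and (2) are inherited from $\cC$ since any relation among a subset of the points extends by zeros to one among the full set. Theorem~\ref{thm:UI} applied to $\cC'$ then produces a finite set $F \subset \cC'(\C)$ containing every $c'$ at which the $Z_i(c')$ are multiplicatively dependent and $P_1(c'),\dots,P_r(c')$ are $R$-linearly dependent.

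It remains to show that every $c\in\cC(\C)$ satisfying the hypotheses of the corollary has $\pi(c)\in F$; the conclusion follows because $\pi|_\cC$ is a non-constant morphism between irreducible curves, hence has finite fibres, so $\pi^{-1}(F)\cap\cC$ is finite. Given a non-trivial relation $\sum_{j=1}^n c_j P_j(c) = O$ with $c_j \in \Z$, multiplying by $A = \prod_{i=r+1}^n a_i$ and substituting the identities $a_i P_i(c) = \sum_j b_{ij} P_j(c)$ yields
\[
\sum_{j=1}^r \mu_j P_j(c) = O, \qquad \mu_j = A c_j + \sum_{i=r+1}^n (A/a_i)\,c_i\, b_{ij} \in R.
\]
If every $\mu_j$ vanished, then $(c_1,\dots,c_n)$ would be a $K$-linear combination of the generators of the $R$-module $M\subset R^n$ of generic relations among $P_1,\dots,P_n$; clearing denominators by a suitable $d\in\Z_{>0}$ would give $d(c_1,\dots,c_n)\in M\cap\Z^n$. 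Since the $\Z$-linear independence of $P_1,\dots,P_n$ says exactly that $M\cap\Z^n = \{0\}$, this forces $(c_1,\dots,c_n) = 0$, contradicting the non-triviality of our relation. Hence some $\mu_j\ne 0$, and $P_1(c),\dots,P_r(c)$ are $R$-linearly dependent, as required.

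The main hurdle is precisely this last step: converting a $\Z$-relation among all $n$ specialized points into an $R$-relation among the $r$ chosen $R$-independent ones. It is the only place where the distinction between $\Z$- and $R$-linear dependence genuinely matters, and it is exactly where the hypothesis of $\Z$-linear independence of the $P_j$ is used in its essential form, through the observation that $M\cap\Z^n = \{0\}$.
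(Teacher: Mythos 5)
Your proof is correct, and it takes a genuinely different route from the paper's. The paper handles the case $R:=\mathrm{End}(E)\neq\Z$ by invoking the structure theory of algebraic subgroups of $E^n$: from the $R$-module $\Lambda$ of generic relations it builds the algebraic subgroup $\mathcal{L}(\Lambda)$ (citing Lemma~2.3 of \cite{BC}), obtains a surjective finite homomorphism $\phi:\mathcal{L}(\Lambda)\to E^{n-n'}$, and then applies Theorem~\ref{thm:UI} to the curve $(Z_1,\dots,Z_m,\phi(P_1,\dots,P_n))$ in $\Gm^m\times E^{n-n'}$. You instead work entirely with coordinates: after choosing a maximal $R$-independent subset $P_1,\dots,P_r$, you project onto $\Gm^m\times E^r$ and reduce to Theorem~\ref{thm:UI} for that curve. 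The crux of your argument is the explicit conversion of the $\Z$-relation $\sum_j c_jP_j(c)=O$ into the relation $\sum_{j\le r}\mu_jP_j(c)=O$, together with the norm trick showing that if all $\mu_j$ vanish then $N(A)\cdot(c_1,\dots,c_n)\in \Lambda\cap\Z^n=\{0\}$, a contradiction. Your route avoids the structure theorem altogether, at the cost of the slightly fiddly bookkeeping with the multipliers $\mu_j$; the paper's route is conceptually cleaner but leans on a quoted result about subgroups of powers of an elliptic curve. Two small wrinkles in your write-up, neither fatal: the inheritance of condition~(1) is immediate because the $Z_i$ are untouched by $\pi$ (the ``extending by zeros'' phrase only pertains to~(2), which in any case fails on $\cC$ whenever $r<n$); and you should note that $r\ge 1$ holds because $\Z$-linear independence of the $P_j$ forces each $P_j$ to be non-torsion.
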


\begin{proof}
	It is clear that our claim follows directly from Theorem \ref{thm:UI} in case the points $P_1, \dots, P_n$ are linearly independent over $\mathrm{End}(E)$. We only need to consider the case in which $R:=\End(E)\neq \Z$ and the points $P_1, \ldots, P_n$ satisfy a linear dependence relation over $\mathrm{End}(E)$, but none over $\Z$. Note that this automatically means that condition (2) does not hold and therefore (1) is satisfied.
	
Set $$\Lambda=\lg(\rho_1, \dots, \rho_n)\in R^n:~ \sum_{j=1}^n \rho_j P_j=O\rg.$$
	
	Our hypothesis on the $P_j$ implies that $\Lambda\cap \Z^n=\{0\}$.
	This is a finitely generated $R$-submodule of $ R^n$ of some rank $n'$, $1\leq n'< n$.
	It is a well known fact (see, e.g., Lemma 2.3 of \cite{BC}) that the set
	$$
	\mathcal{L}(\Lambda)=\lg (Q_1, \dots, Q_n)\in E^n:~ \sum_{j=1}^n \rho_j Q_j=O \text{ for all } (\rho_1, \dots, \rho_n)\in \Lambda \rg
	$$
	defines an algebraic subgroup of $E^n$ of dimension $n-n'$. Moreover, there is a surjective and finite homomorphism of algebraic groups
	$$
	\phi : \mathcal{L}(\Lambda)  \rightarrow   E^{n-n'} .
	$$
	Then, our hypotheses imply that $\phi(P_1, \dots , P_n)$ gives an irreducible curve that does not lie in a proper algebraic subgroup of $E^{n-n'}$.
	
	Suppose there are infinitely many points $c \in \cC(\C)$ such that $Z_1(c), \ldots, Z_m(c)$ are multiplicatively dependent and $P_1(c),\dots,$ $P_n(c)$ are linearly  dependent over $\Z$. Then, every $(P_1(c),\dots, P_n(c))$ lies in a proper algebraic subgroup of $\mathcal{L}(\Lambda)$ and its image via $\phi$ in a proper algebraic subgroup of  $ E^{n-n'}$. 
	A contradiction arises by applying Theorem \ref{thm:UI} to the curve in $\Gm^m\times E^{n-n'}$ given by $(Z_1, \ldots, Z_m,\phi(P_1, \dots , P_n))$, concluding the proof.
\end{proof}

\section{Proofs of main results}
\label{sect:proofs}

\subsection{Proof of Theorem~\ref{thm:Fp-A}} 
For any non-zero integer vector $\pmb{k}= \left(k_1, \ldots, k_m\right) \in \Z^m$,  we define the rational function 
$$
\Omega_{\pmb{k}}(X) = \varphi_1(X)^{k_1} \cdots \varphi_m(X)^{k_m}.
$$
We write $\varphi_i = f_i/g_i$ with relatively prime  polynomials $f_i, g_i \in \Z[X]$, 
$i =1, \ldots, m$, and thus we have $\Omega_{\pmb{k}}(X)=F_{\pmb{k}}(X)/G_{\pmb{k}}(X)$
with polynomials $F_{\pmb{k}}(X), G_{\pmb{k}}(X)\in\Z[X]$ defined by
\begin{equation}  \label{eq:FG}
\begin{split}
F_{\pmb{k}}(X)& =\prod_{\substack{1\le i\le m\\ k_i>0}}f_i(X)^{k_i} 
\prod_{\substack{1\le i\le m\\ k_i<0}}g_i(X)^{-k_i},\\
G_{\pmb{k}}(X)&=\prod_{\substack{1\le i\le m\\ k_i<0}}f_i(X)^{-k_i} 
\prod_{\substack{1\le i\le m\\ k_i>0}}g_i(X)^{k_i}.
\end {split}
\end{equation}

Recall that $\cS_1 \subset \C$ is the set of all the elements $\alpha\in \C$ which are solutions to the system of equations
\begin{equation}   \label{eq:Omega}
\Omega_{\pmb{k}}(X)-1=\Omega_{\pmb{\ell}}(X)-1=0
\end{equation}
for some linearly independent vectors $\pmb{k},\pmb{\ell}  \in \Z^m$. In what follows, we will always tacitly assume the vectors  $\pmb{k},\pmb{\ell}$ to be linearly independent. 

Clearly,  if $\alpha \in \cS_1$, then every Galois conjugate of $\alpha$ over $\Q$ is also in $\cS_1$. 

By  Lemma~\ref{lem:eff_Maurin} the set $\cS_1$ is finite and we have 
\begin{equation}  \label{eq:S1}
\#\cS_1 \ll_{\bphi} 1,
\end{equation}
where the implied constant is effectively computable. 

Let $W_{\cS_1} \in \Z[X]$ be the product of all the irreducible polynomials (without multiplicity) having some $\alpha \in \cS_1$ as a root. 
Clearly, we have 
\begin{equation*}
\deg W_{\cS_1} = \# \cS_1. 
\end{equation*}
Define
$$
P_{\pmb{\ell}}=\frac{F_{\pmb{\ell}}-G_{\pmb{\ell}}}{\gcd(F_{\pmb{\ell}}-G_{\pmb{\ell}},(F_{\pmb{\ell}}-G_{\pmb{\ell}})')} \mand 
\tilde{P}_{\pmb{\ell}}=\frac{P_{\pmb{\ell}}}{\gcd(P_{\pmb{\ell}},W_{\cS_1})}\in \Z[X].
$$
Note that since the polynomial $P_{\pmb{\ell}}$ has only simple roots, we have $\gcd(\tilde{P}_{\pmb{\ell}}, W_{\cS_1})=1$.

Then,  the system of equations
\begin{equation}\label{eq:KL}
F_{\pmb{k}}(X)-G_{\pmb{k}}(X)=\tilde{P}_{\pmb{\ell}}(X)=0
\end{equation}
has no solution over $\C$. Our aim is to show now that this system has no solution over $\overline{\F}_p$ if we take $p$ large enough 
when 
$$
\pmb{k}\in\{0,\pm 1,\dots, \pm K\}^m \setminus \{\mathbf{0}\}, \qquad \pmb{\ell}\in\{0,\pm 1,\dots, \pm L\}^m \setminus \{\mathbf{0}\}.
$$

To do this, first notice that $F_{\pmb{k}}(X)-G_{\pmb{k}}(X)$ is not identically zero, since the rational functions $\varphi_1, \ldots, \varphi_m$ are multiplicatively independent by assumption. 

Moreover, suppose that $F_{\pmb{k}}(X)-G_{\pmb{k}}(X)=a\in \Z\setminus \{0\}$. 
 Then the system \eqref{eq:KL} may have a solution over $\overline{\F}_p$ if and only if $p \mid a$. 
 But, if we take $p$ larger than $H(F_{\pmb{k}}-G_{\pmb{k}})$ then this is not the case. By Lemma~\ref{lem:HeightPoly}  we have 
\begin{align*}
\max\{\h (F_{\pmb{k}} ), \h (G_{\pmb{k}} )   \}
& \le \sum_{i=1}^m |k_i| \max\{\h(f_i)+\deg f_i, \h(g_i)+ \deg g_i\} \\
& \ll_{\bphi} K , 
\end{align*}
which  implies 
\begin{equation}   \label{eq:hFG}
\h (F_{\pmb{k}}-G_{\pmb{k}}) \ll_{\bphi} K,
\end{equation}
hence, if $p\ge \exp(cK)$ for some constant $c$ depending only on $\bphi$, then the system \eqref{eq:KL} has no solution over $\overline{\F}_p$ as wanted. 

Now assume that $F_{\pmb{k}}(X)-G_{\pmb{k}}(X)$ is non-constant; we denote 
$$
R_{\pmb{k},\pmb{\ell}}=\Res(F_{\pmb{k}}(X)-G_{\pmb{k}}(X), \tilde{P}_{\pmb{\ell}}(X)  ), 
$$
which is non-zero. 
So, if $p>|R_{\pmb{k},\pmb{\ell}}|$, then $p\nmid R_{\pmb{k},\pmb{\ell}}$, and thus the system of equations~\eqref{eq:KL} has no solution over $\overline{\F}_p$. 

Therefore,  it is easy to see that the desired result follows when   
\begin{equation}\label{eq:KL_bound}
 p>\max\left \{ \exp(cK),\ \max_{\pmb{k}\in\{0,\pm 1,\dots, \pm K\}^m\setminus\{\mathbf{0}\}}\max_{\pmb{\ell}\in\{0,\pm 1,\dots, \pm L\}^m\setminus\{\mathbf{0}\}}| R_{\pmb{k},\pmb{\ell}} | \right \}.
\end{equation}
Hence, it remains to estimate $R_{\pmb{k},\pmb{\ell}}$, for the parameters $\pmb{k}$ and $\pmb{\ell}$ in
the same ranges as on the right hand side of~\eqref{eq:KL_bound}.

We note that 
considering $F_{\pmb{k}}$ and $G_{\pmb{k}}$, defined by~\eqref{eq:FG}, as  products of at most $|k_1| + \cdots + |k_m|$ polynomials, we have
$$
\deg(F_{\pmb{k}}-G_{\pmb{k}})\ll_{\bphi} K,
$$
and 
\[
\h (F_{\pmb{k}}-G_{\pmb{k}}) \ll_{\bphi} K
\]
as shown previously.

For the polynomial $\tilde{P}_{\pmb{\ell}}$, we clearly have
$$
\deg(\tilde{P}_{\pmb{\ell}})\le \deg(F_{\pmb{\ell}}-G_{\pmb{\ell}})\ll_{\bphi} L,
$$
and  it follows from~\eqref{eq:Mahler} that
\begin{equation*} %\label{eq:h(U)}
\begin{split}
 H(\tilde{P}_{\pmb{\ell}})&\leq 2^{\deg \tilde{P}_{\pmb{\ell}}}M(\tilde{P}_{\pmb{\ell}})
 \leq 2^{\deg (F_{\pmb{\ell}}-G_{\pmb{\ell}})}M(F_{\pmb{\ell}}-G_{\pmb{\ell}})\\
 &\leq 2^{\deg (F_{\pmb{\ell}}-G_{\pmb{\ell}})}\sqrt{\deg (F_{\pmb{\ell}}-G_{\pmb{\ell}})+1  }H(F_{\pmb{\ell}}-G_{\pmb{\ell}}).
\end{split}
 \end{equation*}
 Thus, we conclude that
 $$
 \h(\tilde{P}_{\pmb{\ell}})\ll_{\bphi} L.
 $$
 
 Therefore,  for the parameters $\pmb{k}$ and $\pmb{\ell}$ in
the same ranges as on the right hand side of~\eqref{eq:KL_bound},  by Lemma~\ref{lem:res} we obtain
\begin{equation}
\label{eq:Bound R}
\log |R_{\pmb{k},\pmb{\ell}} | \ll_{\bphi} KL ,
\end{equation}
which together with \eqref{eq:KL_bound} gives the desired lower bound $\exp(c_1 KL)$ for $p$, for some constant $c_1$ depending only on $\bphi$. 
Since all the implied constants in the above estimates are effectively computable, the constant $c_1$ is also effectively computable. 

Finally, by the above discussions,  when $p > \exp(c_1 KL)$, the system of equations~\eqref{eq:KL} has no solution 
over $\ov{\F}_p$ for any linearly independent vectors $\pmb{k},\pmb{\ell}$ in the same ranges as on the right hand side of~\eqref{eq:KL_bound}. 
In addition, if $\alpha$ is a solution of \eqref{eq:Omega} but not a solution of \eqref{eq:KL} over $\ov{\F}_p$, 
then $\alpha$ must be a root of $W_{\cS_1}$ over $\ov{\F}_p$. 
Hence, for the set $\cA_{\bphi}(p,K,L)$ we have 
$$
\#\cA_{\bphi}(p,K,L) \le \deg W_{\cS_1} = \# \cS_1,
$$
which completes the proof.

\subsection{Proof of Corollary~\ref{cor:Fp-multdep}}
This follows directly from Theorem~\ref{thm:Fp-A} applied to the rational functions $\varphi_1,\ldots,\varphi_m, \varrho_1, \ldots, \varrho_n$ instead of $\varphi_1,\ldots,\varphi_m$.

\subsection{Proof of Theorem~\ref{thm:Ord2}}
We use the notation in the proof of Theorem~\ref{thm:Fp-A}, with $\pmb{k},\pmb{\ell}$ linearly independent. Define  
$$
T = \prod_{\pmb{k} \in \{0,\pm 1,\ldots, \pm K\}^m \setminus \{\vec{0}\}}\,   \prod_{\pmb{\ell}\in\{0,\pm 1,\dots, \pm L\}^m\setminus\{\mathbf{0}\}} |R_{\pmb{k},\pmb{\ell}} |.
$$
Then, by \eqref{eq:Bound R} we have 
$$
\log T \ll_{\bphi} (KL)^{m+1}.
$$
For $N$ large enough, we take 
$$
K =  L =  \rf{(N/\log N)^{1/(2m+2)}}. 
$$
Since $T$ has at most $c\log T / \log\log T$ distinct prime divisors for some absolute constant $c$, 
we derive that $v_p(T) = 0$ for all but at most 
$$
c\log T / \log\log T  \ll_{\bphi} (KL)^{m+1} / \log((KL)^{m+1}) \ll N(\log N)^{-2}
$$
primes $p$ (even without the restriction $p \le N$). 

In addition, we define  
$$
J = \prod_{\pmb{k} \in \{0,\pm 1,\ldots, \pm K\}^m \setminus \{\vec{0}\}} H(F_{\pmb{k}}-G_{\pmb{k}}). 
$$
By \eqref{eq:hFG}, we have 
$$
\log J \ll_{\bphi} K^{m+1}.
$$
As the above, we obtain that $v_p(J) = 0$ for all but at most 
$$
c\log J / \log\log J  \ll_{\bphi}  N^{1/2}(\log N)^{-3/2}
$$
primes $p$ (even without the restriction $p \le N$).

Now, consider primes $p$ with $v_p(J) = 0$ and $v_p(T) = 0$ and vectors $\pmb{k}, \pmb{\ell}$ mentioned above. 
If $F_{\pmb{k}}-G_{\pmb{k}}$ is a non-zero constant, 
since $v_p(J) = 0$ we have that $p$ does not divide $F_{\pmb{k}}-G_{\pmb{k}}$, 
and thus the system of equations~\eqref{eq:KL} has no solution over $\ov{\F}_p$.  
If   $F_{\pmb{k}}-G_{\pmb{k}}$ is non-constant, 
then since $v_p(T) = 0$ we obtain that $p$ does not divide the resultant $R_{\pmb{k},\pmb{\ell}}$, 
and so again this system of equations  has no solution over $\ov{\F}_p$. 
Hence, as before we have that the cardinality $\#\cA_{\bphi}(p,K,L)$ is at most $c_2$ 
which is a constant depending only on $\bphi$. 
Note that if $\alpha \not\in \cA_{\bphi}(p,K,L)$, then the elements $\varphi_1(\alpha), \ldots, \varphi_m(\alpha)$ 
do not satisfy two independent multiplicative relations with exponents bounded above by $K = L$ in absolute value. 
Hence, at least $m-1$ elements of $\varphi_1(\alpha), \ldots, \varphi_m(\alpha)$ are of order at least $K$. 
This completes the proof.

\subsection{Proof of Theorem~\ref{thm:Ord-A}} 
We follow the approach in proving Theorem~\ref{thm:Ord2}. 
However, this time for $N$ large enough, we take 
$$
K =  \rf{N^{n/(2mn+m+n)}/(\log N)^{1/(2m)}} \mand L=  \rf{N^{m/(2mn+m+n)}/(\log N)^{1/(2n)}}.
$$

Note that any $K$-multiplicatively  independent elements  $\alpha_1,\ldots, \alpha_m\in \ov \F_p^*$  
generate a subgroup of $\ov{\F}_p^*$ of order at least $K^m \ge N^{mn/(2mn+m+n)}(\log N)^{-1/2}$. 
In addition, we have  $L^n \ge N^{mn/(2mn+m+n)}(\log N)^{-1/2}$.  
Then, the desired result follows similarly.

\subsection{Proof of Theorem~\ref{thm:Fp-multdep A}}
We proceed as in the proof of Theorem~\ref{thm:Fp-A}. In what follows, we will always suppose the vectors $\pmb{k}$ and $\pmb{\ell}$ to be linearly independent.
However, this time we use 
Lemma~\ref{lem:Res} to estimate the number $s(\pmb{k},\pmb{\ell})$ of solutions  to 
the system of equations~\eqref{eq:KL} over $\ov{\F}_p$  for the parameters
$\pmb{k}$ and $\pmb{\ell}$ in the same ranges as on the right hand side of~\eqref{eq:KL_bound}. 
Besides, according to the condition in Lemma~\ref{lem:Res} that at least one of the polynomials in the system~\eqref{eq:KL} should not be identically zero modulo $p$, the prime $p$ should be large enough. 
For this, choosing $p$ larger than the absolute value of the discriminant of the square-free part of the polynomial $f_1g_1 \ldots f_m g_m$ 
(that is, distinct roots of this polynomial in $\overline{\Q}$ remain distinct modulo $p$) 
and noticing that $\varphi_1,\ldots,\varphi_m$ are multiplicatively independent modulo constants, 
we have that $\varphi_1, \ldots, \varphi_m$ are multiplicatively independent viewed as rational functions in $\ov{\F}_p(X)$, 
and so $F_{\pmb{k}}(X)-G_{\pmb{k}}(X)$ does not vanish modulo $p$.

Now, using Lemma~\ref{lem:Res} we obtain
$$
\sum_{\substack{\pmb{k} \in \{0,\pm 1,\ldots, \pm K\}^m \setminus \{\vec{0}\} \\ \pmb{\ell}\in\{0,\pm 1,\dots, \pm L\}^m\setminus\{\mathbf{0}\}}} s(\pmb{k},\pmb{\ell})  \le  v_p(T), 
$$
where 
$$
T = \prod_{\pmb{k} \in \{0,\pm 1,\ldots, \pm K\}^m \setminus \{\vec{0}\}}\,   \prod_{\pmb{\ell}\in\{0,\pm 1,\dots, \pm L\}^m\setminus\{\mathbf{0}\}} |R_{\pmb{k},\pmb{\ell}} |.
$$

Using the bound~\eqref{eq:Bound R},  we get the desired upper bound for $\log T$. 
Hence, we have 
$$
\#\cA_{\bphi}(p,K,L) \le v_p(T) + \deg W_{\cS_1} = v_p(T) + \# \cS_1, 
$$
which completes the proof by noticing \eqref{eq:S1}.

\subsection{Proof of Theorem~\ref{thm:Fp-multdep D}}

It suffices to follow the same arguments as in the proof of Theorem~\ref{thm:Fp-multdep A} by noticing that in this case we need to consider vectors 
$\pmb{k} \in \{0,\pm 1,\ldots, \pm K\}^m \setminus \{\vec{0}\}$ and $\pmb{\ell}\in\{0,\pm 1,\dots, \pm L\}^n\setminus\{\mathbf{0}\}$, 
and thus the contribution of those $\pmb{\ell}$ is $L^{n+1}$ in the bound of $\log T$ instead of $L^{m+1}$. 
This completes the proof.

\subsection{Proof of Theorem \ref{thm:EC}}
For any $m$-tuple $\pmb{k}=(k_1,\dots, k_m)\in\Z^m\setminus \{\mathbf{0}\}$ define 
$$
\Omega_{\pmb{k}}=\varphi_1^{k_1}\dots \varphi_m^{k_m}\in \Q(X), 
$$
and for any $n$-tuple $\pmb{\ell}=(\ell_1,\dots, \ell_n)\in\Z^n\setminus \{\mathbf{0}\}$ let $\Theta_{\pmb{\ell}}\in\Q(X)$ be defined by 
\begin{equation*}
\Theta_{\pmb{\ell}}=\left\{\begin{array}{ll}\Psi_{\ell_1}\circ \varrho_1& \textrm{if $n=1$,}\\
\sigma_n\left(\frac{\phi_{\ell_1}}{\psi_{\ell_1}^2}\circ \varrho_1,\dots, \frac{\phi_{\ell_n}}{\psi_{\ell_n}^2}\circ \varrho_n \right) & \textrm{if $n \ge 2$,}\end{array}\right.
\end{equation*}
where $\psi_{\ell_i},\Psi_{\ell_1}, \phi_{\ell_i}$ have been defined in Section~\ref{sec:div-poly} and $\sigma_n$ is the $n$-th summation polynomial associated to the curve~$E$ defined in Lemma~\ref{lem:summation-polys}. 
By assumption, $\Theta_{\pmb{\ell}}$ is non-zero. 

We remark that for any $\alpha \in \cB_{\bphi,\brho,E}(p,K,L)$, if $(\varrho_i(\alpha), \cdot)$ is a torsion point for some $1 \le i \le n$, 
then this situation is essentially reduced to the case when $n=1$.     

So, from now on, when $n \ge 2$, we do not consider those $\alpha \in \cB_{\bphi,\brho,E}(p,K,L)$ such that $(\varrho_i(\alpha), \cdot)$ is a torsion point for some $1 \le i \le n$.

The proof follows similar lines as in the proof of Theorem~\ref{thm:Fp-A}. Indeed, recall that $\cS_2 \subset \C$ is the set of all the elements $\alpha\in \C$ which are solutions to the system of equations
$$
\Omega_{\pmb{k}}(X)-1=\Theta_{\pmb{\ell}}(X)=0 \quad \text{for some } \pmb{k}\in\Z^m \setminus \{\mathbf{0}\} \text{ and } \pmb{\ell}\in\Z^n \setminus \{\mathbf{0}\}.
$$
By Lemma~\ref{lem:new2.7} (for $n=1$ it suffices to apply Lemma~\ref{lem:tors}) and noticing \eqref{eq:nP} and the definition of summation polynomials, 
the set $\cS_2$ is finite and we have 
$$
\#\cS_2 \ll_{\bphi, \brho, E} 1, 
$$ 
where the implied constant is effectively computable when $n=1$ by Lemma~\ref{lem:tors}. 

Write
$$
\Omega_{\pmb{k}}=\frac{F_{\pmb{k}}}{G_{\pmb{k}}},\quad \gcd(F_{\pmb{k}},G_{\pmb{k}})=1, \mand \Theta_{\pmb{\ell}}=\frac{U_{\pmb{\ell}}}{V_{\pmb{\ell}}},\quad \gcd(U_{\pmb{\ell}},V_{\pmb{\ell}})=1,
$$
with polynomials $F_{\pmb{k}},G_{\pmb{k}},U_{\pmb{\ell}}, V_{\pmb{\ell}}\in \Z[X]$. 

Let $W_{\cS_2} \in \Z[X]$ be the product of all the irreducible polynomials (without multiplicity) having some $\alpha \in \cS_2$ as a root. 
Define
$$
\overline{U}_{\pmb{\ell}}=\frac{U_{\pmb{\ell}}}{\gcd(U_{\pmb{\ell}},U_{\pmb{\ell}}')} \mand 
\tilde{U}_{\pmb{\ell}}=\frac{\overline{U}_{\pmb{\ell}}}{\gcd(\overline{U}_{\pmb{\ell}},W_{\cS_2})}\in \Z[X].
$$
Note that since the polynomial $\overline{U}_{\pmb{\ell}}$ has only simple roots, we have $\gcd(\tilde{U}_{\pmb{\ell}}, W_{\cS_2})=1$.

Then,  the system of equations
\begin{equation}\label{eq:OT}
F_{\pmb{k}}(X)-G_{\pmb{k}}(X)=\tilde{U}_{\pmb{\ell}}(X)=0
\end{equation}
has no solution over $\C$. We want to show that, if $p$ is large enough, then this system has no solution over $\overline{\F}_p$ 
when
$$
\pmb{k}\in\{0,\pm 1,\dots, \pm K\}^m \setminus \{\mathbf{0}\}, \qquad \pmb{\ell}\in\{0,\pm 1,\dots, \pm L\}^n \setminus \{\mathbf{0}\}.
$$

If $F_{\pmb{k}}(X)-G_{\pmb{k}}(X)$ is a non-zero constant, using the same argument as in Theorem \ref{thm:Fp-A}, we have that if $p\ge \exp(cK)$ for some constant $c$ depending only on $\pmb{\varphi}$, then $p\nmid F_{\pmb{k}}(X)-G_{\pmb{k}}(X)$ and the above system has no solution over $\overline{\F}_p$. 

Now, let us assume that $F_{\pmb{k}}(X)-G_{\pmb{k}}(X)$ is not constant and denote 
$$
R_{\pmb{k},\pmb{\ell}}=\Res(F_{\pmb{k}}(X)-G_{\pmb{k}}(X), \tilde{U}_{\pmb{\ell}}(X)  ),
$$
which is non-zero. 
So, if $p>|R_{\pmb{k},\pmb{\ell}}|$, then $p\nmid R_{\pmb{k},\pmb{\ell}}$, and so the system of equations~\eqref{eq:OT} has no solution over $\overline{\F}_p$. 

Therefore, it is easy to see that the desired result follows when 
\begin{equation}\label{eq:OT_bound}
 p>\max \left \{ \exp(cK),\,\max_{\pmb{k}\in\{0,\pm 1,\dots, \pm K\}^m\setminus\{\mathbf{0}\}}\max_{\pmb{\ell}\in\{0,\pm 1,\dots, \pm L\}^n\setminus\{\mathbf{0}\}}| R_{\pmb{k},\pmb{\ell}} | \right \}.
\end{equation}
Hence, it remains to estimate $R_{\pmb{k},\pmb{\ell}}$, for the parameters $\pmb{k}$ and $\pmb{\ell}$ in
the same ranges as on the right hand side of~\eqref{eq:OT_bound}.

From the proof of Theorem~\ref{thm:Fp-A}, for any $\pmb{k}\in\{0,\pm 1,\dots, \pm K\}^m\setminus\{\mathbf{0}\}$ we have
\begin{equation}  \label{eq:dh-FG}
\deg (F_{\pmb{k}}-G_{\pmb{k}}) \ll_{\bphi} K, \qquad \h(F_{\pmb{k}}-G_{\pmb{k}})\ll_{\bphi} K. 
\end{equation}

If $n=1$ (that is, $\pmb{\ell}=\ell_1$), then directly by Lemmas~\ref{lem:hight-composition} and~\ref{lem:Psi-height}  and by \eqref{eq:div-degree}, for $\ell_1\le L$, we have
\begin{equation}\label{eq:n1}
\deg \Theta_{\ell_1} \ll_{\varrho_1} L^2 \mand  \h(\Theta_{\ell_1})\ll_{\varrho_1, E} L^2. 
 \end{equation}

Now, we assume that $n \ge 2$. In this case, it follows from~\eqref{eq:Mahler} that
\begin{equation}\label{eq:h(U)}
\begin{split}
 H(\tilde{U}_{\pmb{\ell}})&\leq 2^{\deg \tilde{U}_{\pmb{\ell}}}M(\tilde{U}_{\pmb{\ell}})
 \leq 2^{\deg U_{\pmb{\ell}}}M(U_{\pmb{\ell}})\\
 &\leq 2^{\deg U_{\pmb{\ell}}}\sqrt{\deg U_{\pmb{\ell}}+1  }H(U_{\pmb{\ell}}).
\end{split}
 \end{equation}
By Lemmas~\ref{lem:hight-composition},~\ref{lem:Psi-height} and \ref{lem:division} we have
\begin{equation}\label{eq:height-of-comp}
\begin{split}
 \deg(\phi_{\ell_i}\circ \varrho_i) \ll_{\varrho_i} \ell_i^{2}&\mand \deg(\psi^2_{\ell_i}\circ \varrho_i) \ll_{\varrho_i} \ell_i^{2},\\
 \h(\phi_{\ell_i}\circ \varrho_i) \ll_{\varrho_i, E} \ell_i^{2}&\mand \h(\psi^2_{\ell_i}\circ \varrho_i) \ll_{\varrho_i, E} \ell_i^{2}. 
 \end{split}
\end{equation}
Applying again Lemma~\ref{lem:hight-composition} (with $R=\sigma_n$ and $f_i=\frac{\phi_{\ell_i}}{\psi_{\ell_i}^2}\circ \varrho_i$, $i=1,\ldots,n$), Lemmas~\ref{lem:summation-polys} and~\ref{lem:height_of_summation} and the estimates~\eqref{eq:height-of-comp}, for $\pmb{\ell}\in\{0,\pm 1,\dots, \pm L\}^n\setminus\{\mathbf{0}\}$ we obtain
\begin{align}
\label{eq:h(Theta)}
\deg \Theta_{\pmb{\ell}} \ll_{\brho} L^2, \qquad \h(\Theta_{\pmb{\ell}})\ll_{\brho, E} L^2.
\end{align}

Now, since by definitions of degree and height of a rational function we have
$$
\deg \tilde{U}_{\pmb{\ell}} \le \deg U_{\pmb{\ell}} \le \deg \Theta_{\pmb{\ell}}, \qquad 
\h(U_{\pmb{\ell}})\le \h(\Theta_{\pmb{\ell}}),
$$
from~\eqref{eq:n1},~\eqref{eq:h(U)} and~\eqref{eq:h(Theta)} for both cases we conclude that
\begin{equation}  \label{eq:h-Un}
\deg \tilde{U}_{\pmb{\ell}} \ll_{\brho} L^2, \qquad 
h(\tilde{U}_{\pmb{\ell}}) \ll_{\brho, E} L^2. 
\end{equation}

Therefore,  for the parameters $\pmb{k}$ and $\pmb{\ell}$ in
the same ranges as on the right hand side of~\eqref{eq:OT_bound},  
by Lemma~\ref{lem:res} and using \eqref{eq:dh-FG} and \eqref{eq:h-Un},  we obtain
\begin{equation}
\label{eq:Bound R2}
\log |R_{\pmb{k},\pmb{\ell}}| \ll_{\bphi, \brho, E} K L^2, 
\end{equation}
which together with \eqref{eq:OT_bound} gives the desired lower bound $\exp(c_1 KL^2)$ for $p$,  for some constant $c_1$ depending only on $\pmb{\varphi}, \brho$ and $E$. 
Since the implied constants in \eqref{eq:dh-FG}, \eqref{eq:n1}, \eqref{eq:height-of-comp}, \eqref{eq:h(Theta)} and \eqref{eq:h-Un} 
are all effectively computable, the constant $c_1$ is also effectively computable. 

Finally, by the above discussions,  when $p > \exp(c_1 KL^2)$, the system of equations~\eqref{eq:OT} has no solution 
over $\ov{\F}_p$ for any $\pmb{k}, \pmb{\ell}$ in the same ranges as on the right hand side of~\eqref{eq:OT_bound}.
Hence, as before, we obtain 
$$
\#\cB_{\bphi, \brho, E}(p,K,L) \le \# \cS_2.
$$  
This completes the proof.

\subsection{Proof of Theorem~\ref{thm:Ord-B}}
We follow the approach in the proof of Theorem~\ref{thm:Ord2} by using \eqref{eq:Bound R2} instead of \eqref{eq:Bound R} 
and considering the system of equations \eqref{eq:OT} and  the set $\cB_{\bphi, \brho, E}(p,K,L)$ instead of 
\eqref{eq:KL} and  the set $\cA_{\bphi}(p,K,L)$. 
This time, for $N$ large enough, take 
$$
K =  \rf{N^{n/(2mn +2m+n)}/(\log N)^{1/(2m)}} \mand L =  \rf{N^{m/(2mn +2m+n)}/(\log N)^{1/(2n)}}. 
$$
Then, one can obtain the desired result similarly.

\subsection{Proof of Theorem~\ref{thm:EC-multdep B}}
We proceed as in the proof of Theorem~\ref{thm:EC} and follow the approach in proving Theorem~\ref{thm:Fp-multdep A}. 
However, this time we use the bound \eqref{eq:Bound R2} instead of \eqref{eq:Bound R}. 
So, the cardinality $\#\cB_{\bphi, \brho, E}(p,K,L)$ is at most $v_p(T) +\#\cS_2$ 
when $p$ is greater than an effectively computable constant $c_2$ depending only on $\bphi, \brho$ and $E$. 
The desired result then follows.

\subsection{Proof of Theorem~\ref{thm:EC-C}}
For any $n$-tuple $\pmb{k}=(k_1,\dots, k_n) \in \Z^n\setminus \{\mathbf{0}\}$, 
as before we define 
\begin{equation*}
\Theta_{\pmb{k}}=\left\{\begin{array}{ll}\Psi_{k_1}\circ \varrho_1& \textrm{if $n=1$,}\\
\sigma_n\left(\frac{\phi_{k_1}}{\psi_{k_1}^2}\circ \varrho_1,\dots, \frac{\phi_{k_n}}{\psi_{k_n}^2}\circ \varrho_n \right) & \textrm{if $n \ge 2$.}\end{array}\right.
\end{equation*}
By assumption, it is non-zero.

The proof follows similar lines as in the proof of Theorem~\ref{thm:EC}.

Recall that $\cS_3 \subset \C$ is the set of all the elements $\alpha\in \C$ which are solutions to the system of equations
\begin{equation*}   \label{eq:Theta}
\Theta_{\pmb{k}}(X) = \Theta_{\pmb{\ell}}(X) = 0
\end{equation*}
for some linearly independent vectors $\pmb{k},\pmb{\ell}  \in \Z^n$. In what follows, we will always assume the vectors $\pmb{k},\pmb{\ell}$ to be linearly independent. 
Note that $\pmb{k},\pmb{\ell}$ are also linearly independent over $\mathrm{End}(E)$. 
Then, by  Lemma~\ref{lem:Gala} the set $\cS_3$ is finite and we have 
\begin{equation*}  \label{eq:S3}
\#\cS_3 \ll_{\brho,E} 1. 
\end{equation*}

Let $W_{\cS_3} \in \Z[X]$ be the product of all the irreducible polynomials (without multiplicity) having some $\alpha \in \cS_3$ as a root. 
As in the proof of Theorem~\ref{thm:EC}, we define the polynomials $U_{\pmb{k}}$ and $\tilde{U}_{\pmb{\ell}}$ 
with respect to $\Theta_{\pmb{k}}$ and $\Theta_{\pmb{\ell}}$ by using $W_{\cS_3}$ (instead of $W_{\cS_2}$). 

Then,  the system of equations   
\begin{equation}\label{eq:Ukl}
U_{\pmb{k}}(X) = \tilde{U}_{\pmb{\ell}}(X)=0
\end{equation}
has no solution over $\C$. Our aim is to show that, for $p$ large enough, this system has no solution over $\overline{\F}_p$ 
when  
$$
\pmb{k}\in\{0,\pm 1,\dots, \pm K\}^n \setminus \{\mathbf{0}\}, \qquad \pmb{\ell}\in\{0,\pm 1,\dots, \pm L\}^n \setminus \{\mathbf{0}\}.
$$

First, using the estimate \eqref{eq:h(Theta)}  (by replacing $L$ with $K$ there) and noticing $U_{\pmb{k}}$ is the numerator of $\Theta_{\pmb{k}}$, 
we have 
\begin{equation}  \label{eq:hUk}
\h(U_{\pmb{k}}) \ll_{\brho, E} K^2. 
\end{equation}

If $U_{\pmb{k}}(X)$  is a non-zero constant,    using \eqref{eq:hUk} we obtain that there exists a constant $c$ depending only on $\pmb{\rho}$ and $E$, such that, if $p>\exp(cK^2)$, then $p$ does not divide $U_{\pmb{k}}(X)$, and the system \eqref{eq:Ukl} has no solution over $\overline{\F}_p$.

Now, let us assume that $U_{\pmb{k}}(X)$ is non-constant and denote
$$
R_{\pmb{k},\pmb{\ell}}=\Res(U_{\pmb{k}}(X), \tilde{U}_{\pmb{\ell}}(X) ),
$$
which is non-zero. 
So, if $p>|R_{\pmb{k},\pmb{\ell}}|$, then $p\nmid R_{\pmb{k},\pmb{\ell}}$, and so the system of equations~\eqref{eq:Ukl} has no solution over $\overline{\F}_p$. 
It remains to estimate $R_{\pmb{k},\pmb{\ell}}$.   
Applying similar lines as in the proof of Theorem~\ref{thm:EC}, 
we obtain
\begin{equation}   \label{eq:logRkl}
\log |R_{\pmb{k},\pmb{\ell}}| \ll_{\brho, E} K^2 L^2. 
\end{equation}

Therefore, we obtain that there exists an effectively computable constant  $c_1$ 
depending only on  $\brho, E$ such that for any prime $p>\exp (c_1 K^2 L^2)$, 
the system of equations~\eqref{eq:Ukl} has no solution 
over $\ov{\F}_p$ for any $\pmb{k}, \pmb{\ell}$ in the above ranges, 
and so as before, for the set \eqref{eq:Set C} we have
$$
\#\cC_{\brho, E}(p,K,L)  \leq \#\cS_3.
$$

\subsection{Proof of Theorem~\ref{thm:EC-Ord2}}   
We follow the approach in the proof of Theorem~\ref{thm:Ord2} by using \eqref{eq:hUk} and \eqref{eq:logRkl} instead of \eqref{eq:hFG} and \eqref{eq:Bound R} 
and by considering the system of equations \eqref{eq:Ukl} and  the set $\cC_{\brho, E}(p,K,L)$ instead of 
\eqref{eq:KL} and  the set $\cA_{\bphi}(p,K,L)$. 
This time, for $N$ large enough, take 
$$
K = L =   \rf{(N/\log N)^{1/(2n+4)}}. 
$$
Then, one can obtain the desired result similarly.

\subsection{Proof of Theorem~\ref{thm:EC-Ord3}}
We obtain the desired result by 
 following the approach in proving Theorem~\ref{thm:EC-Ord2} with
$$
K =  \rf{N^{n/(2mn +2m+2n)}/(\log N)^{1/(2m)}} \mand L =  \rf{N^{m/(2mn +2m+2n)}/(\log N)^{1/(2n)}}. 
$$

\section*{Acknowledgement} 
The authors are grateful to Gabriel Dill, Igor Shparlinski and Umberto Zannier for helpful discussions, to the authors of~\cite{KMS} for sending them a preliminary version of their work, and to the referee for far valuable comments and suggestions.
For this research, L.M. was supported by the
Austrian Science Fund (FWF): Project P31762, A.O.  was supported by the Australian Research Council Grants DP180100201 and DP200100355, 
and M.S. was partially supported by the Australian Research Council Grant DE190100888. A.O. also gratefully acknowledges the generosity and hospitality of the Max Planck Institute for Mathematics where parts of her work on this project were developed.

\end{document}